\newtheorem{thm}{Theorem}[section]
\newtheorem{corollary}[thm]{Corollary}
\newtheorem{prop}[thm]{Proposition}
\newtheorem{lem}[thm]{Lemma}
\theoremstyle{definition}
\newtheorem{defn}[thm]{Definition}
\newtheorem{cnd}[thm]{Condition}
\newcommand{\thmref}[1]{Theorem~{\rm \ref{#1}}}
\newcommand{\lemref}[1]{Lemma~{\rm \ref{#1}}}
\newcommand{\corref}[1]{Corollary~{\rm \ref{#1}}}
\newcommand{\cndref}[1]{Condition~{\rm \ref{#1}}}
\newcommand{\propref}[1]{Proposition~{\rm \ref{#1}}}
\newcommand{\defref}[1]{Definition~{\rm \ref{#1}}}
\newcommand{\remref}[1]{Remark~{\rm \ref{#1}}}
\newcommand{\sectref}[1]{Section~{\rm \ref{#1}}}
\newtheorem{rem}[thm]{Remark}
\def\R{\ensuremath {\mathbb R}}
\newcommand{\e}{\varepsilon}
\renewcommand{\P}{\ensuremath{\mathbb P}}
\newcommand{\E}{\ensuremath{\mathbb E}}
\newcommand{\I}{\ensuremath{\mathcal I}}
\newcommand{\N}{\ensuremath{\mathbb N}}
\newcommand{\A}{\ensuremath{\mathcal A}}
\newcommand{\wdt}{\widetilde}
\renewcommand{\d}{\mathrm{\, d}}
\newcommand{\set}[1]{\left\{#1\right\}}
\newcommand{\lbar}{\overline}
\renewcommand{\(}{\left(}
\renewcommand{\)}{\right)}
\makeatletter \@addtoreset{equation}{section}
\title{Continuous Inventory Models of Diffusion Type: Long-term Average Cost Criterion\thanks{This research was supported in part by     the Simons Foundation under grant award 246271 and a grant from UWM Research Growth Initiative.}}
\author[1]{K.L. Helmes}
\author[2]{R.H. Stockbridge}
\author[2]{C. Zhu}
\affil[1]{\small Institute for Operations Research, Humboldt University of Berlin, Spandauer Str. 1, 10178, Berlin, Germany, {\tt helmes@wiwi.hu-berlin.de}}
\affil[2]{Department of Mathematical Sciences,   University of Wisconsin-Milwaukee,   Milwaukee, WI 53201,   USA,   {\tt stockbri@uwm.edu}, {\tt zhu@uwm.edu}}
\begin{document}

\maketitle

\abstract{This paper establishes conditions for optimality of an $(s,S)$ ordering policy for the minimization of the long-term average cost of one-dimensional diffusion inventory models.  The class of such models under consideration have general drift and diffusion coefficients and boundary points that are consistent with the notion that demand should tend to decrease the inventory level.  Characterization of the cost of a general $(s,S)$ policy as a function $F$ of two variables naturally leads to a nonlinear optimization problem over the ordering levels $s$ and $S$.  Existence of an optimizing pair $(s_*,S_*)$ is established for these models.  Using the minimal value $F_*$ of $F$, along with $(s_*,S_*)$, a function $G$ is identified which is proven to be a solution of a quasi-variational inequality provided a simple condition holds.  At this level of generality, optimality of the $(s_*,S_*)$ ordering policy is established within a large class of ordering policies such that local martingale and transversality conditions involving $G$ hold.  For specific models, optimality of an $(s,S)$ policy in the general class of admissible policies can be established using comparison results.  This most general optimality result is shown for the classical drifted Brownian motion inventory model with holding and fixed plus proportional ordering costs and for a geometric Brownian motion inventory model with fixed plus level-dependent ordering costs.  However, for a drifted Brownian motion process with reflection at $\{0\}$, a new class of non-Markovian policies is introduced which have lower costs than the $(s,S)$ policies.  In addition, interpreting reflection at $\{0\}$ as ``just-in-time'' ordering, a necessary and sufficient condition is given that determines when just-in-time ordering is better than traditional $(s,S)$ policies.
}
\smallskip

\noindent
{\bf MSC 2010 Classifications:} 93E20, 90B05, 60H30  
\smallskip 

\noindent
{\bf Key Words:} inventory, impulse control, long-term average cost, general diffusion models, $(s,S)$ policies

  \section{Introduction}
This paper examines the optimal ordering for single-item inventory processes in the presence of fixed and level-dependent order costs as well as holding and back-order costs.  The presence of positive fixed order costs implies that the mathematical problem is of impulse control type.  The models under consideration are restricted to those having {\em continuous inventory levels,}\/ such as the amount of water behind a dam, the amount of natural gas in a storage facility or the excess capacity of power generation plants.  More precisely, we analyze state-dependent stochastic inventory models.  Such models capture, for instance, the effect visible inventory has on sales and demand.  The review article by \cite{urba:05} provides a comprehensive survey of deterministic state-dependent inventory models up to 2004.  Urban's review also discusses extensions of a basic model and suggests a taxonomy of general models; see also the survey article by \cite{bakk:12}.  Among the many recent articles devoted to inventory theory and shelf-space management, we mention the papers by \cite{berm:06}, \cite{jain:08}, and \cite{baro:11}.  These articles and many purely mathematically oriented articles (see later citations) as well as specific shelf-space management problems (both deterministic and stochastic) have stimulated our research on state-dependent stochastic inventory models.  

We model the inventory processes (in the absence of orders) as solutions to a stochastic differential equation
\begin{equation} \label{dyn}
\d X_0(t) = \mu(X_0(t))\d t + \sigma(X_0(t))\d W(t)
\end{equation}
taking values in an interval ${\mathcal I} = (a,b)  $ with $-\infty \le a < b \le \infty$; negative values of $X_0(t)$ represent back-ordered inventory.  In contrast to the literature in general, we do not impose a sign condition on the drift coefficient $\mu$ but rather require $a$ and $b$ to be certain types of boundaries.  This paper takes advantage of the boundary classification theory for one-dimensional diffusions (see {\em e.g.} Chapter 15 of \cite{KarlinT81}).

An ordering policy $(\tau,Y)$ is a sequence of pairs $\{(\tau_k,Y_k): k \in \N\}$ in which $\tau_k$ denotes the (random) time at which the $k^{th}$ order is placed and $Y_k$ denotes its size.  Since order $k+1$ cannot be placed before order $k$, $\{\tau_k: \in \N\}$ is an increasing sequence of times.  The inventory level process $X$ resulting from an ordering policy $(\tau,Y)$ therefore satisfies the equation
\begin{equation} \label{controlled-dyn}
X(t) = X(0-) + \int_0^t \mu(X(s))\d s + \int_0^t \sigma(X(s))\d W(s) + \sum_{k=1}^\infty I_{\{\tau_k \leq t\}} Y_k.
\end{equation}
Note the initial inventory level $X(0-)$ may be such that an order is placed at time $0$ resulting in a new inventory level at time $0$; this possibility occurs when $\tau_1 = 0$.  Also observe that $X(\tau_k-)$ is the inventory level just prior to the $k^{th}$ order being placed while $X(\tau_k)$ is the level with the new inventory.  Thus, this model assumes that orders are filled instantaneously.

Let $(\tau,Y)$ be an ordering policy and $X$ the resulting inventory level process.  Let $c_0$ and $c_1$ denote the holding/back-order cost rate and ordering cost functions, respectively.  We assume there is some constant $k_1 > 0$ such that $c_1 \geq k_1$; this constant represents the fixed cost for placing each order.  The goal of the inventory management is to minimize the long-term average expected holding/back-order plus ordering costs
\begin{equation} \label{obj-fn}
J(\tau,Y):= \limsup_{t\rightarrow \infty} t^{-1} \E\left[\int_0^t c_0(X(s))\d s + \sum_{k=1}^\infty I_{\{\tau_k \leq t\}} c_1(X(\tau_k-),X(\tau_k))\right].
\end{equation}

Stochastic control motivated by inventory models has a long history and is a common theme in the literature.  Early work on discrete time models may be found in the Stanford Studies (see, e.g., \cite{arro:58}); for an excellent survey, see \cite{girl:01}.  Some early work on continuous time models can be found in \cite{bather-66}, \cite{HarrisonST-83}, \cite{Harrison-85}, \cite{sulem-86} and the references therein. More recent work include \cite{FleiK-03}, \cite{Bens:05}, \cite{PresmanS-06}, \cite{Bens:10}, \cite{cadenillas-10}, \cite{Bens-11}, \cite{Bensoussan-13}, \cite{DaiY-13-average}, and \cite{DaiY-13-discounted}, to name just a few.  In \cite{Bensoussan-13}, a discrete inventory system is studied in which the unmet demand is lost and the excess inventory is subject to shrinkage.  In \cite{sulem-86}, \cite{DaiY-13-average}, and \cite{DaiY-13-discounted}, the inventory processes are modeled by drifted Brownian motion,  \cite{cadenillas-10} considers a mean-reverting Brownian motion while in the models considered in  \cite{Bens:05}, \cite{PresmanS-06}, and \cite{Benk:09}, compound Poisson demands are included in addition to the fluctuations modeled by drifted Brownian motion.  In these articles, the adjustments to inventory levels are represented by controlled impulse jumps (upward or downward) and each such impulse includes a fixed cost and a proportional cost.  In addition, a state-dependent holding and/or back-order cost is also included.  Often, the focus of these papers is to establish the optimality of  an $(s,S)$-policy under the long-run average or discounted cost minimization criteria.  For more literature on the optimality of $(s,S)$ policies for general discounted problems as well as new results, see \cite{HeYZ-15} and \cite{HelmesSZ-14}.

This paper's contributions begin by considering a general one-dimensional diffusion inventory model on some interval with general boundary behavior consistent with a positive demand (see \cndref{diff-cnd}) using general cost structure (see \cndref{cost-cnds}).  Further, it shows that the nonlinear optimization approach used on discounted problems in \cite{HelmesSZ-14} directly applies for a long-term average cost criterion.  For the long-term average problem, this method identifies two simple conditions (one each in \propref{prop-G-qvi} and \thmref{thm-verification}) such that an $(s,S)$ policy is optimal within a large class of ordering strategies.  For specific models, this optimality can be extended to all admissible ordering policies (see e.g., \sectref{sect:examples}).    

This paper and \cite{HelmesSZ-14} both minimize a nonlinear function of two variables $(y,z) \in {\cal R}$, called $F$ in each paper, and use the optimal value $F_*$ and an optimizing pair $(y_*,z_*)$ to determine the optimal value for all initial values.  In this way, the papers solve the respective problems similarly.  However, the two papers approach the stochastic problems differently.  \cite{HelmesSZ-14} derives the function $F$ from a careful analysis of a large class of admissible policies and then comments that the function $F$ is the cost for a related $(s,S)$ policy.  Moreover, the paper utilizes a linear programming imbedding to define the class of admissible policies as well as to determine the function $F$.  In contrast, this paper only involves stochastic analysis; it does not employ a linear programming formulation.  It begins with the careful analysis of the costs associated with an $(s,S)$ ordering policy, thus determining the function $F$.  It then specifies a more general class of ordering policies within which the optimal $(s,S)$ policy is again optimal.  The function $F$ does not arise from analysis of a large class of policies; rather optimality is reverse engineered from knowledge of the costs corresponding to $(s,S)$ ordering policies.  

This paper is organized as follows.  The next section formulates the problem and, in \propref{cor-g0-psi-fns}, defines two key functions and establishes their relation to the expected costs and expected hitting times of the solution $X_0$ of \eqref{dyn}.  \sectref{sect:soln} begins by examining the long-term behavior of $X$ under an $(s,S)$ policy, identifying the stationary distribution and representing the long-term average cost as the function $F$ of \eqref{eq-F-fn}.  Under the model conditions of \sectref{sect:form}, we prove existence of an  optimizing pair.  Using this solution, the function $G$ of \eqref{eq-G-fn} is defined and is proven to be a solution of the quasi-variational inequality (QVI) associated with the impulse control problem provided  \cndref{eq-AG-c0} is satisfied (see \remref{rem-qvi}).  Finally, we show that the optimal $(s,S)$ policy remains optimal within a large class of policies. We then examine a drifted Brownian motion model having a piecewise linear holding/back-order cost rate function $c_0$ and fixed plus proportional ordering cost function $c_1$ in \sectref{sect-dbm} and using a comparison result of \cite{HeYZ-15} show that the optimal $(s,S)$ policy is optimal over all admissible ordering policies.  However, for even a slight modification of the model, optimality of an $(s,S)$ policy in the class of admissible policies may no longer hold.  For example, for the special case of a Brownian motion model with negative drift and reflection at $0$, we introduce a class of non-Markovian policies, called ``delayed $(s,S)$ policies with trigger $\ss$'' in \sectref{sect:refl-dBm}.  We identify a simple sufficient condition under which each such policy has a lower cost than the corresponding $(s,S)$ policy.  Briefly in \sectref{sect:just-in-time}, we interpret reflection at $\{0\}$ as a just-in-time ordering policy and give a necessary and sufficient condition for the just-in-time policy to be less costly than the optimal $(s,S)$ policy.  \sectref{sect:gBM} then considers a geometric Brownian motion model with nonlinear $c_0$ and $c_1$ containing fixed plus state-dependent costs.  Optimality of an $(s,S)$ policy in the general class of admissible ordering policies (cf. \defref{admissible-class-A-defn}) is established by the comparison in \propref{prop-comparison-gbm2}.  We note that the comparison result in \propref{prop-comparison-gbm2} holds pathwise whereas the similar result in \cite{HeYZ-15} only holds in expectation.  The technical proof of \propref{prop-comparison-gbm2} is relegated to the appendix.  Sections \ref{case-1}-\ref{case-3} examine three variations for the parameters in the cost functions and determines the optimality or not of an $(s,S)$ policy.

\section{Formulation} \label{sect:form}
Let ${\mathcal I} = (a,b) \subseteq \R$.  In the absence of ordering, the inventory process $X_0$ satisfies \eqref{dyn} and is assumed to be a regular diffusion.  Throughout the paper we assume that the functions $\mu$ and $\sigma$ are continuous on $\I$ and that \eqref{dyn} is nondegenerate. The initial position of $X_0$ is taken to be $x_0$ for some $x_0 \in {\cal I}$.  Let $\{{\mathcal F}_t\}$ denote the filtration generated by $X_0$, augmented so that it satisfies the usual conditions.  The generator of $X_0$ is
\begin{equation} \label{A-def}
Af(x) = \mbox{$\frac{\sigma^2(x)}{2}$} f''(x) + \mu(x) f'(x)
\end{equation}
which is defined for all $f \in C^2({\mathcal I})$.

\begin{cnd} \label{diff-cnd}
\begin{description}
\item[(a)] Both the speed measure $M$ and the scale function $S$ of the process $X_0$ are absolutely continuous with respect to Lebesgue measure; that is, the scale function
$$S(x):= \int^{x} s(v) \d v, \qquad x \in \I ,$$
in which  $s(x)= \exp(-\int^{x} [2 \mu(v)/ \sigma^{2}(v)] \d v)$, and the speed measure
$$M[y,z]:= \int_y^z m(v) \d v, \qquad [y,z] \subset {\mathcal I} ,$$
where $m(x)= 1/[\sigma^{2}(x) s(x)]$ is the speed density for $x \in {\cal I} $.
\item[(b)] The left boundary, $a$, of ${\cal I}$ is attracting and the right boundary $b$ is non-attracting; see Definition~6.1 of Chapter~15 in \cite{KarlinT81}.
\end{description}
\end{cnd}

Since $\mu$ and $\sigma$ are continuous and $\sigma$ is non-degenerate in ${\cal I}$, $S(x) < \infty$ for $x \in {\cal I}$ and $M[y,z] < \infty$ for all $a < y < z < b$.  Associated with the scale function $S$ of \cndref{diff-cnd}, one can define the scale measure on the Borel sets of ${\mathcal I}$ by $S[y,z] = S(z) - S(y)$ for $[y,z] \subset {\mathcal I}$. We refer the reader to Chapter 15 in \cite{KarlinT81} for details about $S$ and $M$.  The scale measure $S$ and the speed measure $M$ are used to define two functions $g_0$ and $\zeta$ in \eqref{eq-g0-fn} and \eqref{eq-psi-fn} that are critical to the solution of the inventory control problem.

From the modeling point of view, \cndref{diff-cnd}(b) is reasonable since it essentially says that, in the absence of ordering, demand tends to reduce the size of the inventory and there is a positive probability that $X_0(t)$ converges to $a$ as $t\rightarrow \infty$.  In addition, it imposes a reasonable restriction on ``returns'' since it implies that the inventory level will never reach level $b$ solely by diffusion in finite time ($a.s.$).  According to Table~6.2 of Chapter~15 (p.~234) in \cite{KarlinT81}, $a$ may be a regular, exit or natural boundary point with $a$ being attainable in the first two cases and unattainable in the third.  In the case that $a$ is a regular boundary, its boundary behavior must also be specified as being either reflective or sticky (see Chapter 15 of \cite{KarlinT81}).  Again by Table~6.2 of Chapter~15 in \cite{KarlinT81}, $b$ is either a natural or an entrance boundary point and is unattainable from the interior in both cases.  In order to unify our presentation for models having these various types of boundary points, we define the state space of possible inventory levels to be 
$${\cal E} = \begin{cases}
(a,b), & \text{ if } a \mbox{ and } b \mbox{ are natural boundaries,} \\
[a,b), & \mbox{ if $a$ is attainable and $b$ is natural}, \\
(a,b], & \mbox{ if $a$ is natural and $b$ is entrance}, \\
[a,b], & \mbox{ if $a$ is attainable and $b$ is entrance}.
\end{cases} $$
When $a = -\infty$ and/or $b = \infty$, we require these boundaries to be natural.  Since orders increase the inventory level, define ${\cal R} = \{(y,z) \in {\cal E}^2: y < z\}$ in which $y$ denotes the pre-order and $z$ the post-order inventory levels, respectively, and let $\overline{\cal R} = \{(y,z) \in {\cal E}^2: y \leq z\}$.  Notice, in particular, the requirement that $x_0 \in {\cal I}$ so the initial inventory level is not at either boundary when these are in the state space ${\cal E}$.

We note that the scale function $S$ is a fundamental solution to the homogeneous equation $Af = 0$; the other fundamental solution is the constant function.

We now specify the allowable types of orders and their costs.  

\begin{defn} \label{admissible-class-A-defn}
An ordering policy $(\tau,Y):=\{(\tau_k,Y_k): k\in \N\}$ is said to be {\em admissible} if
 \begin{itemize}
  \item[(i)] $\{\tau_k:k\in \N\}$ is an increasing sequence of $\{{\cal F}_t\}$-stopping times, and
  \item[(ii)] for each $k \in \N$, $Y_k$ is nonnegative, ${\cal F}_{\tau_k}$-measurable and satisfies $X(\tau_{k}) \in {\cal E}$.
\end{itemize}
The class of admissible policies is denoted by ${\cal A}$.
\end{defn}

Turning to the cost functions, we impose the following standing assumptions throughout the paper.
\begin{cnd} \label{cost-cnds}
\begin{itemize}
\item[(a)] The holding/back-order cost function $c_0: {\mathcal  I} \rightarrow \R ^+$ is  continuous
and  inf-compact (for each $L > 0$, $\{x\in {\mathcal  I}: c_0(x) \leq L\}$ is compact).  Moreover,
\begin{equation} \label{c0-lim-at-a}
\text{ if } a \in {\cal E},\text{ then  }\lim_{x\rightarrow a} c_0(x) = \infty, \text{ and if } b \in {\cal E}, \text{ then } \lim_{x\rightarrow b} c_0(x) = \infty.
\end{equation}
In addition, for each $y \in {\cal I}$,
\begin{equation} \label{c0-M-integrable}
\int_y^b c_0(v)\, \d M(v)  < \infty
\end{equation}
and
\begin{equation} \label{infinite-dbl-intgrl-at-b}
\int_y^b \int_u^b c_0(v)\, \d M(v)\, \d S(u) = \infty.
\end{equation}
\item[(b)] The function $c_1:\overline{\cal R} \rightarrow \R^+$ is continuous with $c_1 \geq k_1 > 0$ for some constant $k_1$.  Furthermore, for each $y,z \in {\cal I}$ with $y \leq z$,
\begin{equation} \label{decr-cost}
c_1(y,z) \geq c_1(z,z)
\end{equation}
and for each $w,x,y,z \in {\mathcal  I}$ with $w \leq x \leq y \leq z$,
\begin{equation} \label{eq-c1-equal-displacement}
  c_{1}(w,z) + c_{1}(x,y) = c_{1}(w,y) + c_{1}(x,z).
\end{equation}
\end{itemize}
\end{cnd}
The model and cost assumptions place very mild conditions at the boundary $a$.  Requirement \eqref{c0-lim-at-a} follows immediately from $c_0$ being inf-compact when $a$ and $b$ are natural boundaries so this condition only affects models in which $X_0$ can reach $a$ in finite time with positive probability or in which the process starts at $b$ and immediately enters ${\cal I} $ without diffusing back to $b$.  
Notice also that \eqref{c0-lim-at-a} implies that the holding/back-order costs are very expensive when the inventory level is near either boundary.  In particular, when $a$ is either an exit or a sticky boundary point, any admissible ordering policy $(\tau,Y)$ which allows the inventory level to spend a positive amount of time at $a$ will incur an infinite cost.  

The integrability condition of $c_0$ with respect to $M$ in \eqref{c0-M-integrable} is imposed on the model since the model does not allow the decision maker the option of reducing the inventory and therefore he or she cannot control how large the inventory level becomes.  Should this condition fail, an infinite long-term average cost would accrue simply due to the process diffusing toward the boundary $b$.  As will be seen, the non-integrability condition \eqref{infinite-dbl-intgrl-at-b} and assumption \eqref{c0-lim-at-a} on $c_0$ are used in \propref{F-optimizers} to establish the existence of an optimizing pair $(y_*,z_*) \in {\cal R}$ for a nonlinear function $F$.  This pair is then shown to provide the levels for an optimal $(s,S)$-ordering policy in Theorem  \ref{thm-verification}.

With regard to the ordering cost function $c_1$, \eqref{decr-cost} requires that it be no less expensive to order a positive quantity to achieve an inventory level $z$ than it is to order nothing; note that actively ordering nothing incurs at least the fixed cost $k_1$ so differs from not ordering.  Equation \eqref{eq-c1-equal-displacement} of \cndref{cost-cnds}(b) is a technical condition that is used in the proof of \propref{prop-G-qvi}.  This condition is not very restrictive.  For example, let $H \in C({\cal E})$ be monotone increasing.  Then defining $c_1$ on $\overline{\cal R}$ by $c_1(y,z) = k_1+H(z)-H(y)$ will satisfy \cndref{cost-cnds}(b).  Notice that this expression for $c_1$ allows great flexibility in the ordering cost structure.  In particular, the ``per unit'' cost may vary depending on the choices of $y$ and $z$ and, in the case that the function $H$ is also concave, the model allows for economies of scale in the ordering costs.  The geometric Brownian motion inventory model of \sectref{sect:gBM} adopts this structure for $c_1$. 

By selecting $y=x$ in \eqref{eq-c1-equal-displacement} and noting $c_{1}(x,x) \ge k_{1}> 0$, we obtain
\begin{equation}
\label{eq-c1-sub-add}
  c_1(w,z) < c_1(w,x) + c_1(x,z), \quad \forall w < x < z.\end{equation}
The triangle inequality \eqref{eq-c1-sub-add} for $c_1$ requires that the ordering costs be such that it is more expensive to place two orders at the same time than to combine them into a single order.
Further by setting $y=x$ in  \eqref{eq-c1-equal-displacement}, it follows from \eqref{decr-cost} that for any $w \leq x \leq z$, the ordering cost function $c_1$ is monotone decreasing in the first argument:
\begin{equation}
\label{eq-c1-monotone}
c_{1}(w,z) \ge c_{1}(x,z).
\end{equation}

When $b$ is an entrance boundary, $M[y,b) < \infty$ for each $y \in {\cal I}$ (see Table 7.1 of Chapter 15 in \cite{KarlinT81}); when $b$ is a natural boundary $M[y,b)$ may be either finite or infinite.  However, the assumptions on the holding/back-order cost function implies this mass is finite, which in turn implies the existence of stationary measures for $(s,S)$ policies (see \propref{sS-stationary}).

\begin{lem} \label{M-finite}
Assume $b$ is a natural boundary point and \cndref{cost-cnds} holds.  Then $M[y,b) < \infty$ for each $y \in {\cal I}$.
\end{lem}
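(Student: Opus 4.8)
The plan is to combine the integrability hypothesis \eqref{c0-M-integrable} with the divergence of $c_0$ at the natural boundary $b$. The key point is that, since $b$ is natural, $b \notin {\cal E}$, so the domain of $c_0$ remains the open interval ${\cal I} = (a,b)$ and $b$ is not attained; the inf-compactness of $c_0$ then forces $c_0(x) \to \infty$ as $x \to b$. Granting this, finiteness of $M[y,b)$ should follow by a one-line comparison against the finite integral $\int_y^b c_0\, \d M$.

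First I would establish that $\lim_{x \to b} c_0(x) = \infty$. For fixed $L > 0$, the sublevel set $\{x \in {\cal I} : c_0(x) \le L\}$ is compact by \cndref{cost-cnds}(a), hence a closed bounded subset of the open interval $(a,b)$; such a set must be bounded away from $b$. Letting $L$ vary, this is precisely the statement that $c_0$ tends to $\infty$ at $b$ (the same observation recorded in the text after \cndref{cost-cnds}, explaining why \eqref{c0-lim-at-a} is automatic at a natural boundary).

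With the blow-up in hand, I would pick $y' \in (y,b)$ with $c_0 \ge 1$ on $[y',b)$ and estimate
\begin{equation*}
M[y',b) = \int_{y'}^b \d M(v) \le \int_{y'}^b c_0(v)\, \d M(v) \le \int_y^b c_0(v)\, \d M(v) < \infty,
\end{equation*}
the last bound being \eqref{c0-M-integrable}. Adding the contribution $M[y,y'] < \infty$ from the compact subinterval $[y,y'] \subset {\cal I}$ then yields $M[y,b) = M[y,y'] + M[y',b) < \infty$.

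The argument is genuinely short, and I expect the only step requiring any attention to be the first one, namely reading off the blow-up of $c_0$ at $b$ from inf-compactness together with $b \notin {\cal E}$; the remaining comparison is routine. Accordingly, I do not anticipate a real obstacle here.
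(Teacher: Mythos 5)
Your proof is correct and follows essentially the same route as the paper: both arguments combine the inf-compactness of $c_0$ (to handle the region where $c_0$ is small, on which $M$ is finite) with the integrability condition \eqref{c0-M-integrable} via the comparison $1 \le c_0$ where $c_0$ is large. The paper splits the integrand with indicators of the sublevel set $\{c_0 \le 1\}$ rather than first extracting the pointwise blow-up of $c_0$ at $b$ and splitting the interval at a cutoff $y'$, but this is only a cosmetic difference.
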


\begin{proof}
By the inf-compactness of $c_0$ in \cndref{cost-cnds} (a), the set  $\set{x \in \I: c_{0}(x) \le 1}$ is compact in $\I$. Observe that the speed measure $M$ is finite on any compact subset of ${\cal I}$ since $m$ is continuous and hence bounded on the compact set.  Therefore by \eqref{c0-M-integrable}, we have
\begin{equation} \label{eq-M-c0-integral} \begin{aligned}M[y,b) & = \int_{y}^{b} \left[ I_{\{v\in \I: \ c_{0}(v) \le 1\}} +  I_{\{v\in \I:\  c_{0}(v) > 1\}}\right] \d M(v) \\ & \le M (\set{v\in \I: c_{0}(v) \le 1} \cap [y, b))  + \int_{y}^{b} c_{0}(v) \d M(v) < \infty. \end{aligned}
\end{equation}
\end{proof}

We now seek to derive the general representation result \eqref{h-integral-repr} below.  We denote by $X_0$ the diffusion process specified by \eqref{dyn} with initial condition $X_0(0)=x_0$ and, for $a < l  \le  x_0 \le r  < b  $,  define
$$\tau_{l}:=  \inf\set{t \ge 0: X_0(t) = l} ,  \quad \tau_{r}:=  \inf\set{t \ge 0: X_0(t) = r}, \quad \mbox{and} \quad \tau_{l,r}= \tau_{l} \wedge \tau_{r}.$$
Let $h: \I\rightarrow \R^+$ be continuous. Then by the representation in Chapter 15 (see p.\ 197) of \cite{KarlinT81} together with the monotone convergence theorem, we have
\begin{equation} \label{eq-h-KT}
\begin{array}{rcl} \displaystyle
\E_{x_0}\left[\int_{0}^{\tau_{l,r}} h(X_{0}(s))\d s \right] &=& \displaystyle 2 u(x_0) \! \int_{x_0}^{r} [S(r)- S(v)] h(v) \d M(v) \\
& & \displaystyle+\; 2(1-u(x_0)) \!\int_{l}^{x_0} [S(v)- S(l)] h(v) \d M(v),
\end{array}
\end{equation}
where $u(x_0) = \P_{x_0}(\tau_{r} < \tau_{l})= \frac{S(x_0) - S(l)}{S(r) -S(l)}$.  (Note this paper uses $a$ and $b$ as the boundary points with $l$ and $r$ being interior points whereas \cite{KarlinT81} denotes the boundary points as $l$ and $r$ with $a$ and $b$ being interior points.  The formulas from \cite{KarlinT81} have been written using our notation.)

\begin{prop}\label{prop-2.4-h-integral}
Assume Condition \ref{diff-cnd} and let $h:\I \rightarrow \R^+$ be continuous and satisfy
\begin{equation}
\label{eq-h-integrable}
  \int_{y}^{b} h(v) \d M(v)  < \infty \text{ for every } y \in \I.
\end{equation}
Then for any $x_{0}\in \I$,  
\begin{equation} \label{h-integral-repr}
\begin{array}{rcl} \displaystyle 
\E_{x_0}\left[\int_{0}^{\tau_{l}} h(X_0(s))\d s \right]  &=& \displaystyle  2 \int_{l}^{x_0} [S(v) - S(l)] h(v) \d M(v) \\
& & \displaystyle \qquad +\; 2 [S(x_0) - S(l) ] \int_{x_0}^{b} h(v) \d M(v).
\end{array}
\end{equation}
\end{prop}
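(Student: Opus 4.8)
The plan is to derive \eqref{h-integral-repr} from the two–sided exit formula \eqref{eq-h-KT} by letting $r \uparrow b$, and the whole argument hinges on the non-attracting nature of $b$. First I would record the two facts supplied by \cndref{diff-cnd}(b) and the surrounding discussion: (i) in the scale-function characterization of (non-)attracting boundaries (Definition~6.1 of Chapter~15 in \cite{KarlinT81}), $b$ being non-attracting is equivalent to $S(r)\to +\infty$ as $r\uparrow b$; and (ii) since $b$ is unattainable from the interior, $\tau_r \uparrow \infty$ a.s.\ as $r\uparrow b$, so that the nested exit times satisfy $\tau_{l,r}=\tau_l\wedge\tau_r \uparrow \tau_l$. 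Throughout I keep the standing arrangement $a<l\le x_0<b$ of the setup, so that the integrals in \eqref{h-integral-repr} make sense.

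For the left-hand side, since $h\ge 0$ the map $r\mapsto \int_0^{\tau_{l,r}}h(X_0(s))\,\d s$ is nondecreasing, so by monotone convergence $\E_{x_0}\!\big[\int_0^{\tau_{l,r}}h(X_0(s))\,\d s\big]\uparrow \E_{x_0}\!\big[\int_0^{\tau_l}h(X_0(s))\,\d s\big]$. For the second term on the right of \eqref{eq-h-KT}, the exit probability $u(x_0)=\frac{S(x_0)-S(l)}{S(r)-S(l)}\to 0$ because $S(r)\to\infty$, and the integral $\int_l^{x_0}[S(v)-S(l)]h(v)\,\d M(v)$ is finite (it is over the compact $[l,x_0]$, on which $m$, $S$ and $h$ are continuous); hence $2(1-u(x_0))\int_l^{x_0}[S(v)-S(l)]h\,\d M \to 2\int_l^{x_0}[S(v)-S(l)]h\,\d M$, which is the first term of \eqref{h-integral-repr}.

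The delicate passage is the first term of \eqref{eq-h-KT}. I would rewrite it, using the explicit form of $u(x_0)$, as
\begin{equation*}
2u(x_0)\int_{x_0}^{r}[S(r)-S(v)]\,h(v)\,\d M(v) = 2\,[S(x_0)-S(l)]\int_{x_0}^{r}\frac{S(r)-S(v)}{S(r)-S(l)}\,h(v)\,\d M(v).
\end{equation*}
For $l\le v\le r$ the weight $\frac{S(r)-S(v)}{S(r)-S(l)}$ lies in $[0,1]$ (as $S$ is nondecreasing), and for each fixed $v$ it tends to $1$ as $r\uparrow b$ since $S(r)\to\infty$. Writing the integral over $[x_0,b)$ against the indicator $I_{\{v\le r\}}$, the integrand is then dominated by $h\cdot I_{[x_0,b)}$, which is $M$-integrable by \eqref{eq-h-integrable}. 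Dominated convergence gives $\int_{x_0}^{r}\frac{S(r)-S(v)}{S(r)-S(l)}h\,\d M \to \int_{x_0}^{b}h\,\d M$, so the first term converges to $2[S(x_0)-S(l)]\int_{x_0}^{b}h\,\d M$, the second term of \eqref{h-integral-repr}. Combining the three limits yields the claim.

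I expect the main obstacle to be exactly this last step, where both the integrand and the domain of integration move with $r$: the crux is that the single hypothesis that $b$ is non-attracting (equivalently $S(r)\to\infty$) simultaneously forces the weight to $1$ \emph{and} keeps it bounded by $1$, so a uniform dominating function, namely $h$, is available from \eqref{eq-h-integrable}. The other point requiring care is the justification that $\tau_{l,r}\uparrow\tau_l$, which I would base on the unattainability of $b$ recorded above; note that finiteness of $\tau_l$ itself is never needed, since monotone convergence applies regardless.
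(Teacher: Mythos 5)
Your proof is correct and follows essentially the same route as the paper's: monotone convergence on the left side of \eqref{eq-h-KT} using $\tau_r\uparrow\infty$ (unattainability of $b$), and dominated convergence on the term with moving domain, with the weight $\frac{S(r)-S(\cdot)}{S(r)-S(l)}I_{\{\cdot\le r\}}$ bounded by $1$ and the dominating function $h$ supplied by \eqref{eq-h-integrable}. The only cosmetic difference is that the paper splits the $(1-u(x_0))$ term into a fixed integral plus a vanishing remainder, whereas you pass to the limit in the product directly; the substance is identical.
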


\begin{proof}
Note  that $b$ being non-attracting means that it is unattainable from the interior and hence $\tau_{r} \to \infty$ $a.s.$\ as $r \nearrow b$. Then it follows from the monotone convergence theorem that the left-hand side of \eqref{eq-h-KT} converges to $\E_{x_0}[\int_{0}^{\tau_{l}} c_{0}(X(s))\d s ] $ as  $r \nearrow b$. On the other hand, we rewrite the right-hand side of \eqref{eq-h-KT} as
\begin{equation}
\label{eq-rhd}
 \begin{aligned}
 2  &  \int_{l}^{x_0} [S(v)- S(l)] h(v) \d M(v)  \\ & + 2 [S(x_0) - S(l)] \int_{x_f0}^{b} \frac{ S(r) -S(v)}{S(r)- S(l)} I_{\{ v \le r\}} h(v) \d M(v)  \\   & -2 [S(x_0) -S(l)] \int_{l}^{x_0} \frac{S(v) - S(l)}{S(r)  - S(l)} h(v) \d M(v).
\end{aligned}
  \end{equation}
For the second term of \eqref{eq-rhd}, the integrand $\frac{ S(r) -S(\cdot)}{S(r)- S(l)} I_{\{\cdot \le r\}} h(\cdot) $ is nonnegative, bounded above by and converges pointwise to the integrable function $h(\cdot)$ as $r \nearrow b$. Thus by \eqref{eq-h-integrable} and the dominated convergence theorem, the second term converges to
$$2 [S(x_0) - S(l)] \int_{x_0}^{b} h(v) \d M(v)$$
as $r \nearrow b$.
Similarly, the third term of \eqref{eq-rhd} converges to $0$ as $r \nearrow b$. Therefore as $r \nearrow b$, the right hand side of  \eqref{eq-h-KT}  converges to
$$2  \int_{l}^{x_0} [S(v)- S(l)] h(v) \d M(v)   + 2  [S(x_0) - S(l)] \int_{x_0}^{b} h(v) \d M(v),$$
establishing \eqref{h-integral-repr}.
\end{proof}

\begin{prop}\label{cor-g0-psi-fns}
Assume Conditions \ref{diff-cnd} and \ref{cost-cnds} hold.  Fix $C \in {\cal I}$ arbitrarily.  Define the functions $g_0$ and $\zeta$ on ${\cal E}$ by
\setlength{\arraycolsep}{0.5mm}
\begin{eqnarray}
\label{eq-g0-fn}
 g_{0}(x) = g_{0}(x; C)&:=& \int_{C}^{x} \int_{u}^{b} 2 c_{0}(v) \d M(v) \d S(u), \qquad \\
\label{eq-psi-fn}
\zeta(x) = \zeta(x;C)&:=& 2 \int_{C}^{x} M[u, b) \d S(u).
\end{eqnarray}
Then for any $(y,z) \in {\cal R}$ with $a < y < z < b$,
\begin{equation}
\label{eq-path-integral-c0-yz}
\E_{z}\left[ \int_{0}^{\tau_{y}}  c_{0}(X_0(s)) \d s\right]  = g_0(z) - g_0 (y),  \quad \mbox{ and } \quad \E_{z}[\tau_{y}] = \zeta(z) - \zeta(y).
\end{equation}
\end{prop}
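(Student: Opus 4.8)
The plan is to read off both identities from the representation formula \eqref{h-integral-repr} of \propref{prop-2.4-h-integral}, applied first with $h=c_0$ and then with $h\equiv 1$, and then to recognize the resulting sums of integrals as the increments $g_0(z)-g_0(y)$ and $\zeta(z)-\zeta(y)$ after an interchange of the order of integration. The increments only involve differences, so the arbitrary base point $C$ in \eqref{eq-g0-fn} and \eqref{eq-psi-fn} plays no role.

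First I would check the hypotheses of \propref{prop-2.4-h-integral} for each choice of $h$. For $h=c_0$, continuity is part of \cndref{cost-cnds}(a) and the integrability requirement \eqref{eq-h-integrable} is precisely \eqref{c0-M-integrable}. For $h\equiv 1$, I need $M[y,b)<\infty$ for each $y\in\I$, which is \lemref{M-finite} when $b$ is natural and follows from the finiteness of the entrance-boundary mass otherwise. Taking starting point $x_0=z$ and lower level $l=y$ in \eqref{h-integral-repr} then gives
\[
\E_z\left[\int_0^{\tau_y} c_0(X_0(s))\,\d s\right] = 2\int_y^z [S(v)-S(y)]\,c_0(v)\,\d M(v) + 2[S(z)-S(y)]\int_z^b c_0(v)\,\d M(v),
\]
and, with $h\equiv 1$,
\[
\E_z[\tau_y] = 2\int_y^z [S(v)-S(y)]\,\d M(v) + 2[S(z)-S(y)]\,M[z,b).
\]

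Next I would compute the increments directly from the definitions \eqref{eq-g0-fn} and \eqref{eq-psi-fn}, obtaining
\[
g_0(z)-g_0(y) = 2\int_y^z \left(\int_u^b c_0(v)\,\d M(v)\right)\d S(u), \qquad \zeta(z)-\zeta(y) = 2\int_y^z M[u,b)\,\d S(u).
\]
To match these with the two displays above, I would split the inner integral at $z$, writing $\int_u^b=\int_u^z+\int_z^b$ for the cost (respectively $M[u,b)=M[u,z)+M[z,b)$ for the time) for $u\in[y,z]$. The piece over $[z,b)$ is independent of $u$ and factors out as $[S(z)-S(y)]\int_z^b c_0\,\d M$ (respectively $[S(z)-S(y)]\,M[z,b)$), reproducing the second term in each case. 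For the piece over $[u,z]$, the remaining double integral is over the triangle $\{(u,v): y\le u\le v\le z\}$; integrating in $\d S(u)$ first produces the factor $\int_y^v \d S(u)=S(v)-S(y)$, which reproduces the first term.

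The only delicate point is this interchange of the order of integration against the two Stieltjes measures $\d S$ and $\d M$ (equivalently $s(u)\,\d u$ and $m(v)\,\d v$ by \cndref{diff-cnd}(a)). Since all integrands are nonnegative and the relevant iterated integrals are finite—by \eqref{c0-M-integrable} in the cost case and by the finiteness of $M[y,b)$ in the time case—Tonelli's theorem legitimizes the swap on the bounded triangle. This identifies both right-hand sides with the stated increments, establishing \eqref{eq-path-integral-c0-yz}.
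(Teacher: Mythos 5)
Your proposal is correct and follows essentially the same route as the paper: both verify the hypotheses of Proposition~\ref{prop-2.4-h-integral} for $h=c_0$ and $h\equiv 1$ via \eqref{c0-M-integrable} and Lemma~\ref{M-finite}, apply \eqref{h-integral-repr} with $x_0=z$, $l=y$, and then identify the result with the increments of $g_0$ and $\zeta$ by splitting the inner integral at $z$ and swapping the order of integration over the triangle (the paper does this swap in the opposite direction and leaves Tonelli implicit, but the content is identical).
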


\begin{proof}
\lemref{M-finite} and \cndref{cost-cnds}(a) implies that \eqref{eq-h-integrable} is satisfied with $h\equiv 1$ and with $h = c_0$, respectively.  From \eqref{h-integral-repr}, we  have  for any $(y,z) \in {\cal R}$ with $a < y < z <b$,
  \begin{align*}\E_{z}& \left[ \int_{0}^{\tau_{y}}  c_{0}(X_0(s)) \d s\right] \\ & = 2  \int_{y}^{z} [S(v)- S(y)] c_{0}(v) \d M(v)   + 2  [S(z) - S(y)] \int_{z}^{b} c_{0} (v) \d M(v) \\
  & =  2 \int_{y}^{z}\left[ \int_{y}^{v} \d S(u)\right]c_{0}(v) \d M(v) + 2  \int_{y}^{z} \left[\int_{z}^b c_{0} (v) \d M(v) \right] \d S(u) \\
  & =  2 \int_{y}^{z}\left[  \int_{u}^{z} c_{0}(v) \d M(v)\right]  \d S(u)  +  2  \int_{y}^{z} \left[\int_{z}^b c_{0} (v) \d M(v) \right] \d S(u)  \\
  & =   \int_{y}^{z}\left[  \int_{u}^{b} 2 c_{0}(v) \d M(v)\right]  \d S(u) \\
  & = g_{0}(z) - g_{0}(y),
\end{align*}  establishing the first identity in \eqref{eq-path-integral-c0-yz}.  The second identity of \eqref{eq-path-integral-c0-yz} follows from a similar argument.
\end{proof} 

\begin{corollary}\label{cor-b-entrance} 
For any $\ell \in \mathcal I$, we have 
\begin{equation}\label{eq1-cor-b-entrance}
\lim_{x\nearrow b}\E_{x} \biggl[ \int_{0}^{\tau_{\ell}} c_{0}(X_{0}(s)) \d s\biggr] = \infty,
\end{equation} where $\tau_{\ell}: = \inf\{t \ge 0: X_0(t) = \ell \}.$
In particular, if $b$ is an entrance boundary point, then  
\begin{equation}\label{eq2-cor-b-entrance}
\E_{b} \biggl[ \int_{0}^{\tau_{\ell}} c_{0}(X_{0}(s)) \d s\biggr] = \infty,
\end{equation} 
\end{corollary}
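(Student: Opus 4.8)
The plan is to reduce everything to the explicit cost representation in \propref{cor-g0-psi-fns} and then exploit the non-integrability hypothesis \eqref{infinite-dbl-intgrl-at-b}. For the limit \eqref{eq1-cor-b-entrance} I would fix $\ell \in \I$, take an interior point $x$ with $\ell < x < b$, and apply the first identity of \eqref{eq-path-integral-c0-yz} with $y = \ell$, $z = x$ to write
\[
\E_x\Bigl[\int_0^{\tau_\ell} c_0(X_0(s))\d s\Bigr] = g_0(x) - g_0(\ell) = \int_\ell^x\Bigl[\int_u^b 2c_0(v)\d M(v)\Bigr]\d S(u),
\]
the last equality being just the definition \eqref{eq-g0-fn} of $g_0$. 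Since the inner integral is nonnegative and finite for each $u$ by \eqref{c0-M-integrable}, letting $x \nearrow b$ and invoking monotone convergence identifies the limit as $\int_\ell^b\int_u^b 2c_0(v)\d M(v)\d S(u)$, which is infinite by \eqref{infinite-dbl-intgrl-at-b}. This is exactly \eqref{eq1-cor-b-entrance}.

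For \eqref{eq2-cor-b-entrance} I would use the entrance-boundary structure together with the strong Markov property to transfer the blow-up from interior starting points to the starting point $b$ itself. When $b$ is entrance, $b \in \E$ and the diffusion started at $b$ enters $\I$ instantaneously and never returns to $b$; moreover, since $a$ is attracting, it hits every interior level almost surely. Thus for any $x$ with $\ell < x < b$, path continuity forces the process to cross $x$ before reaching $\ell$, so $\tau_x \le \tau_\ell$ and $\tau_x < \infty$ $\P_b$-a.s. Discarding the nonnegative cost accrued on $[0,\tau_x]$ and then applying the strong Markov property at $\tau_x$, at which $X_0(\tau_x) = x$, I would obtain
\[
\E_b\Bigl[\int_0^{\tau_\ell} c_0(X_0(s))\d s\Bigr] \ge \E_b\Bigl[\int_{\tau_x}^{\tau_\ell} c_0(X_0(s))\d s\Bigr] = \E_x\Bigl[\int_0^{\tau_\ell} c_0(X_0(s))\d s\Bigr] = g_0(x) - g_0(\ell).
\]
Sending $x \nearrow b$ and using the first part \eqref{eq1-cor-b-entrance} then forces the left-hand side to be infinite.

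The two technical points need care but not ingenuity: in the first part, the interchange of limit and $\d S$-integration is justified purely by $c_0 \ge 0$ via monotone convergence; in the second, one must confirm that the diffusion from the entrance boundary $b$ is well defined, enters the interior at once, and reaches every level $x \in (\ell,b)$ in finite time before hitting $\ell$. The crux of the argument is the strong Markov reduction at $\tau_x$, since the representation of \propref{cor-g0-psi-fns} is a priori available only for interior initial points; this reduction is what lets the interior blow-up established in \eqref{eq1-cor-b-entrance} propagate to the boundary value $\E_b[\,\cdot\,]$.
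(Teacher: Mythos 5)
Your proof is correct, and the first half is the paper's argument verbatim: both invoke \propref{cor-g0-psi-fns} to write $\E_x[\int_0^{\tau_\ell}c_0(X_0(s))\d s]=g_0(x)-g_0(\ell)$ and let \eqref{infinite-dbl-intgrl-at-b} force the blow-up as $x\nearrow b$ (the monotone-convergence remark is harmless but not even needed, since $g_0$ is increasing and its limit at $b$ is by definition the divergent double integral). The two arguments part ways only on \eqref{eq2-cor-b-entrance}. The paper couples the solutions $X_0(\cdot;x)$ for varying initial points, asserts $\tau^x_\ell\nearrow\tau^b_\ell$, and applies the monotone convergence theorem to conclude that the expected costs converge to $\E_b[\int_0^{\tau_\ell}c_0(X_0(s))\d s]$, i.e.\ it identifies the boundary value as an exact limit; making that MCT step airtight really requires ordering the coupled \emph{paths} (so that the cost up to $\tau^x_\ell$ is pointwise increasing in $x$), not merely the hitting times, since $c_0$ is not monotone. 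Your route instead applies the strong Markov property at $\tau_x$ under $\P_b$ and discards the nonnegative cost on $[0,\tau_x]$, yielding only the one-sided bound $\E_b[\cdot]\ge\E_x[\cdot]=g_0(x)-g_0(\ell)$ -- which is all that an ``$=\infty$'' conclusion needs. This is slightly more economical and sidesteps the delicate pointwise-monotonicity issue, at the cost of not producing the limit identity; it does rely on $\P_b(\tau_x<\infty)>0$ and on $\tau_x\le\tau_\ell$ under $\P_b$, both of which you justify correctly from $b$ being an entrance boundary and path continuity. Either argument is acceptable.
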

\begin{proof}
For $\ell < x < b$, let $X_{0}(t; x)$ denote the solution of \eqref{dyn} with initial condition $X_0(0;x) = x$ and define $\tau^{x}_{\ell} : = \inf\{t\ge 0: X_{0}(t; x) \le \ell \}.$  By Proposition \ref{cor-g0-psi-fns}, we have 
\begin{equation}\label{eq1-b-entrance}
 \E\biggl[ \int_{0}^{\tau^{x}_{\ell}} c_{0}(X_{0}(s;x)) \d s\biggr]=   \E_{x}  \biggl[ \int_{0}^{\tau_{\ell}} c_{0}(X_{0}(s)) \d s\biggr] = g_{0}(x) - g_{0}(\ell). 
\end{equation}  
As $x\nearrow b$, \eqref{infinite-dbl-intgrl-at-b} implies that the rightmost expression   of \eqref{eq1-b-entrance} converges to infinity; from which \eqref{eq1-cor-b-entrance} follows. 

Now we analyze the limiting behavior of the leftmost expression of   \eqref{eq1-b-entrance} when $b$ is an entrance boundary.  Notice that $\tau^{x}_{\ell} \nearrow \tau^{b}_{\ell}$ as $x \nearrow b$ so it follows from the monotone convergence theorem that 
\begin{displaymath}
 \lim_{x \nearrow b}\E\biggl[ \int_{0}^{\tau^{x}_{\ell}} c_{0}(X_{0}(s;x)) \d s\biggr]  =  \E\biggl[ \int_{0}^{\tau^{b}_{\ell}} c_{0}(X_{0}(s;b)) \d s\biggr] = \E_{b} \biggl[ \int_{0}^{\tau_{\ell}} c_{0}(X_{0}(s)) \d s\biggr]. 
\end{displaymath} This implies \eqref{eq2-cor-b-entrance} and completes the proof. 
\end{proof}
\begin{rem}
Recall from equation (3.9) on p. 214 of \cite{KarlinT81} that the generator $A$ defined in \eqref{A-def} can be rewritten as
\begin{equation} \label{A-equiv-defn}
Af(x) = \frac{1}{2} \frac{\d}{ \d M}\left[ \frac{\d f(x)}{\d S}\right], \quad \forall f \in C^2 (\I).
\end{equation}
Since $c_{0}$ and the constant function $1$ are  continuous, it follows immediately that the functions  $g_0$ and $\zeta$ defined in \eqref{eq-g0-fn} and \eqref{eq-psi-fn} are twice continuously differentiable and respectively satisfy
\begin{equation} \label{A-g0=c0}
 A g_0(x) = - c_0(x), \quad x \in \I ,
\end{equation}
and
\begin{equation} \label{A-psi=1}
A\zeta (x) = -1, \quad x \in \I .
\end{equation}
Also both $g_{0}$ and $\zeta$ are increasing functions.
\end{rem}

\section{Solution of the Inventory Control Problem} \label{sect:soln}
We break the analysis of the solution to the inventory problem into two parts.  The first part examines $(s,S)$ ordering policies while the second extends the optimality of an $(s,S)$ policy to a large class of admissible ordering policies.

\subsection{Examination of $(s,S)$ ordering policies} \label{subsect:sS}
Our model only allows orders which increase the inventory level.  As a result, the manager is able to control how low the inventory level becomes so it is not necessary to require finiteness of the speed measure near $a$.  The opportunity to order may result in processes having stationary distributions, as the next proposition demonstrates.

\begin{prop} \label{sS-stationary}
Assume Conditions \ref{diff-cnd} and \ref{cost-cnds}.  Let $x_0 \in {\mathcal  I}$ and $(y,z) \in {\mathcal  R}$.   Set $\tau_0=0$ and define the ordering policy $(\tau,Y)$ by
\begin{equation} \label{sS-tau-def}
\left\{\begin{array}{l} 
\tau_k = \inf\{t > \tau_{k-1}: X(t-) \leq y\}, \\
Y_k = z-X(\tau_k-),
\end{array} \right.
\quad  k \geq 1,
\end{equation}
in which $X$ is the inventory level process satisfying \eqref{controlled-dyn} with this ordering policy and $X(0-) = x_0$.  Then $X$ has a stationary distribution with density $\pi$ on ${\mathcal  I}$ given by
\begin{equation} \label{eq-pi-defn}
\pi(x) = \left\{\begin{array}{cl}
0, & \quad a < x \leq y, \rule[-8pt]{0pt}{8pt} \\
\displaystyle 2\kappa\, m(x)\, S[y,x], &  \quad  y \leq x \leq z, \rule[-8pt]{0pt}{8pt} \\
\displaystyle 2\kappa\, m(x)\, S[y,z], &  \quad  z \leq x < b,
\end{array}\right.
\end{equation}
in which
\begin{equation} \label{norm-const}
\kappa =  \( \int_{y}^{z} 2S[y,x] \d M(x) + 2S[y,z] M[z,b)\)^{-1} = \frac{1}{\zeta(z)-\zeta(y)}.
\end{equation}
is the normalizing constant.  Moreover, the constant $\kappa$ gives the expected frequency of orders. 
\end{prop}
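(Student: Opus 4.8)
The plan is to exploit the regenerative structure of $X$ under the $(s,S)$ policy and to identify the stationary law as the normalized expected occupation measure of a single cycle. Between consecutive orders, $X$ evolves exactly as the uncontrolled diffusion $X_0$ of \eqref{dyn}, and at each ordering time $\tau_k$ the level is reset to $z$. By the strong Markov property the post-order epochs $\{\tau_k\}$ are regeneration times, and since every cycle begins afresh at the common state $z$, the successive cycles (the path segments on $[\tau_{k-1},\tau_k)$ together with their lengths) are i.i.d.\ under $\P_{x_0}$ after the first. A cycle started at $z$ terminates when $X_0$ first reaches $y$, so its length is distributed as $\tau_y$ under $\E_z$. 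By \propref{cor-g0-psi-fns} the mean cycle length equals $\E_z[\tau_y] = \zeta(z) - \zeta(y)$, which is finite because $\zeta$ is well defined (here \lemref{M-finite} guarantees $M[y,b) < \infty$, so the integrand in \eqref{eq-psi-fn} is finite). In particular $\tau_y < \infty$ a.s., and since $b$ is non-attracting the process cannot escape through $b$; hence $X$ is a positive recurrent regenerative process whose excursions live in $[y,b)$.

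First I would compute the expected occupation measure of one cycle. For continuous $h \ge 0$ satisfying \eqref{eq-h-integrable}, \propref{prop-2.4-h-integral} with $l=y$ and $x_0=z$ gives
\begin{equation*}
\E_z\Bigl[\int_0^{\tau_y} h(X_0(s))\,\d s\Bigr] = 2\int_y^z S[y,v]\,h(v)\,\d M(v) + 2\,S[y,z]\int_z^b h(v)\,\d M(v).
\end{equation*}
Reading off the density with respect to Lebesgue measure (recall $\d M(v) = m(v)\,\d v$) identifies the expected cycle occupation measure as the measure with unnormalized density $2\,m(x)\,S[y,x]$ on $[y,z]$, $2\,m(x)\,S[y,z]$ on $[z,b)$, and $0$ on $(a,y]$. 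Because continuous functions determine a Borel measure, this identification extends from the test class to all Borel sets, so the displayed density is exactly the expected occupation density.

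Next I would normalize. By the ratio form of the ergodic (renewal--reward) theorem for positive recurrent regenerative processes with finite mean cycle length, the time average of the occupation of $X$ converges to the expected cycle occupation divided by the mean cycle length, and this limiting probability measure is the stationary distribution. Taking $h \equiv 1$ above returns the total mass $\E_z[\tau_y] = \zeta(z)-\zeta(y)$, which coincides with $\kappa^{-1}$ in \eqref{norm-const}; dividing the unnormalized density by this mass yields precisely $\pi$ of \eqref{eq-pi-defn} with $\kappa = 1/(\zeta(z)-\zeta(y))$. Finally, the number of orders placed in $[0,T]$ is a renewal counting process whose inter-order times are i.i.d.\ with mean $\zeta(z)-\zeta(y)$, so the elementary renewal theorem gives expected order frequency converging to $1/(\zeta(z)-\zeta(y)) = \kappa$, establishing the last assertion.

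The main obstacle is the probabilistic bookkeeping rather than the computation: one must justify that the reset-to-$z$ epochs genuinely furnish an i.i.d.\ regenerative decomposition (the first cycle, started from the arbitrary $x_0$, is merely a delay and does not affect the limit), that the boundary behavior at $b$ renders each cycle a.s.\ finite with occupation supported in $[y,b)$, and that the finite mean cycle length licenses the ratio-limit theorem so as to produce an honest stationary law. Once these are in place, \propref{prop-2.4-h-integral} and \propref{cor-g0-psi-fns} supply the density and the normalizing constant directly.
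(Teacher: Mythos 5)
Your argument is correct, but it reaches \eqref{eq-pi-defn} by a genuinely different route than the paper. The paper works with the stationary forward (adjoint) equation: it writes the stationarity identity $\int_{\mathcal I} Af\,\pi\,\d x + Bf(y,z)\,\kappa = 0$ for all $f \in C^2_b(\mathcal I)$, integrates by parts to obtain the ODE $\frac12(\sigma^2\pi)'' - (\mu\pi)' = 0$ on $[y,z]$ and $(z,b)$ together with matching and boundary conditions at $y$, $z$ and $b$, and then solves using the Karlin--Taylor general solution $K_1 m S + K_2 m$; the constant $\kappa$ emerges as the coefficient of the jump operator in that identity, and existence of the stationary process is backed by a citation to Kurtz and Stockbridge. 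You instead use the regenerative (Kac-type) representation: the stationary law is the expected occupation measure over one $z \to y$ cycle, read off directly from \propref{prop-2.4-h-integral} with $l=y$, $x_0=z$, normalized by the mean cycle length $\E_z[\tau_y] = \zeta(z)-\zeta(y)$, with the order frequency then falling out of the elementary renewal theorem. Your route is shorter given \propref{prop-2.4-h-integral}, makes the identity $\kappa^{-1} = \zeta(z)-\zeta(y)$ and the order-frequency claim immediate rather than consequences of the sixth boundary condition, and dovetails with the renewal--reward machinery the paper itself deploys in \propref{prop-sS-cost}; its cost is that you must invoke (as you acknowledge) the standard theorem identifying the normalized cycle occupation measure of a positive recurrent regenerative Markov process as its unique stationary law, whereas the paper's adjoint-equation computation produces a $\pi$ that visibly annihilates $A$ plus the jump operator --- a form of stationarity it reuses verbatim for the reflected process and for the delayed-policy pair process $(X,\Theta)$ in \sectref{sect:refl-dBm}. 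One small caveat: like the paper, you should restrict the explicit computation to $a<y<z<b$ and note that the degenerate cases ($y=a$ attainable, etc.) are handled by the same formulas on possibly empty intervals.
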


\begin{rem}
The ordering policy \eqref{sS-tau-def} uses the levels $s=y$ and $S=z$ for an $(s,S)$-ordering policy.  Proposition \ref{sS-stationary} therefore establishes that every $(s,S)$ policy has a stationary distribution.  Note, in particular, that when $a\in {\cal E}$ and $y=a$, the policy \eqref{sS-tau-def} orders at the first hitting time of the process at $a$ so the boundary behavior of $X$ does not occur.  Also notice that when $y=a$, the definition of the density $\pi$ on $a < x \leq y$ is on the empty set and similarly when $b \in {\cal E}$ and $z=b$, the definition on $z \leq x < b$ is on the empty set.

Observe also that $\kappa^{-1} = \zeta(z) - \zeta(y)$ gives the expected cycle length, in agreement with \eqref{eq-path-integral-c0-yz}.
\end{rem}

\begin{proof}
We derive the density $\pi$ in the most complicated case of $a < y < z < b$.  

Under the policy $(\tau,Y)$, the process $X$ jumps to $z$ whenever it hits $y$ (or at the initial time if $x_0 < y$).  Between jumps, the process evolves according to the generator $A$ of $X$ in \eqref{A-def} but at the times $\{\tau_k: k \geq 1\}$, the jump operator $Bf(y,z) = f(z) - f(y)$ governs the process.  The stationary density $\pi$ and jump frequency $\kappa$, if they exist, satisfy the stationarity condition
\begin{equation} \label{stat-cnd}
\int_{\mathcal  I} Af(x)\, \pi(x)\d x + Bf(y,z) \kappa = 0, \qquad \forall f \in C^2_b({\mathcal  I}),
\end{equation}
(see \cite{kurt:01} for existence of a stationary process corresponding to the identity \eqref{stat-cnd} in a more general setting).
We solve this identity.

First observe that the ordering policy implies that $X$ is never in the range $(a,y)$ so $\pi = 0$ on this interval and we seek the density on the interval $[y,b)$; the evaluations of the functions below at $b$ are to be interpreted as being the limit as $r \nearrow b$ of the functions evaluated at $r$.  Next, due to the jumps in the process, we define $\pi$ in a piecewise manner on the sets $[y,z]$ and $(z,b)$.  Let $\pi_1$ and $\pi_2$ denote these respective parts of the density.  The condition \eqref{stat-cnd} then takes the form
$$\int_y^z Af(x) \pi_1(x)\d x + \int_z^b Af(x)\, \pi_2(x)\d x + Bf(y,z) \kappa = 0, \qquad \forall f \in C^2_b({\mathcal  I}).$$
Let $f \in C^2_b({\mathcal  I})$.  Using \eqref{A-def}, a formal application of integration by parts yields
\begin{eqnarray} \label{by-parts-id} \nonumber
0 &=& \int_y^z f(x)\left[\mbox{$\frac{1}{2}$} (\sigma^2 \pi_1)''(x) - (\mu \pi_1)'(x)\right]\!\d x  +\; \int_z^b f(x)\left[\mbox{$\frac{1}{2}$} (\sigma^2 \pi_2)''(x) - (\mu \pi_2)'(x)\right]\!\d x \\ \nonumber
& &+\; \mbox{$\frac{1}{2}$} \sigma^2(z)\pi_1(z) f'(z) - \mbox{$\frac{1}{2}$} \sigma^2(y)\pi_1(y) f'(y) + \mbox{$\frac{1}{2}$} \sigma^2(b)\pi_2(b) f'(b) - \mbox{$\frac{1}{2}$} \sigma^2(z)\pi_2(z) f'(z) \\ \nonumber
& & +\; [\mu(z)\pi_1(z) - \mbox{$\frac{1}{2}$} (\sigma^2\pi_1)'(z)]f(z) - [\mu(y)\pi_1(y) - \mbox{$\frac{1}{2}$} (\sigma^2\pi_1)'(y)] f(y) \\
& & +\; [\mu(b)\pi_2(b) - \mbox{$\frac{1}{2}$} (\sigma^2\pi_2)'(b)]f(b) - [\mu(z)\pi_2(z) - \mbox{$\frac{1}{2}$} (\sigma^2\pi_2)'(z)] f(z) \\ \nonumber
& & +\; f(z) \kappa - f(y) \kappa.
\end{eqnarray}
We therefore seek functions $\pi_1$ on $[y,z]$ and $\pi_2$ on $(z,b)$ which are solutions to the differential equation
\begin{equation} \label{stat-de}
\mbox{$\frac{1}{2}$} (\sigma^2 \pi)'' - (\mu \pi)' = 0
\end{equation}
such that \eqref{by-parts-id} holds for each $f$.  These conditions require
\begin{equation}
\label{eq-pi-bd-conds}
\begin{array}{lcl}
0 &=& \lim_{r\rightarrow b} \sigma^2(r) \pi_2(r), \\
0 &=& \lim_{r\rightarrow b} [\mu(r) \pi_2(r) - \mbox{$\frac{1}{2}$} (\sigma^2 \pi_2)'(r)], \\
0 &=& \pi_1(z) - \pi_2(z), \\
0 &=& \mu(z) \pi_1(z) - \mbox{$\frac{1}{2}$} (\sigma^2\pi_1)'(z) - \mu(z) \pi_2(z) + \mbox{$\frac{1}{2}$} (\sigma^2\pi_2)'(z) + \kappa, \\
0 &=& \pi_1(y), \\
0 &=& \mu(y) \pi_1(y) - \mbox{$\frac{1}{2}$} (\sigma^2 \pi_1)'(y) + \kappa.
\end{array}
\end{equation}
By Equation (5.34) of \cite{KarlinT81} (see p.\ 221), the general solutions to \eqref{stat-de} on the respective intervals are
\begin{equation} \label{pi-def}
\pi_1(x) = K_1 m(x) S(x) + K_2 m(x), \quad \mbox{ and } \quad \pi_2(x) = K_3 m(x) S(x) + K_4 m(x),
\end{equation}
in which $K_1, K_2, K_3, K_4$ are arbitrary constants.  Observe that the function $\pi_2$ defined in \eqref{pi-def}  satisfies $\frac{1}{2}(\sigma^2\pi_2)'(x) - \mu(x)\pi_2(x) = \frac{K_3}{2}$.  Thus the second condition of \eqref{eq-pi-bd-conds} implies that $K_3 = 0$ for the density $\pi_2$ on $(z,b)$ and we have
\begin{equation} \label{pi2-form}
\pi_2(x) =  K_4 m(x), \qquad z < x < b
\end{equation}
for some value of $K_4$.  

Consider the first boundary condition of \eqref{eq-pi-bd-conds}.  Observe that $\sigma^2(r) \pi_2(r) = \frac{K_4}{s(r)}$.  When $b< \infty$, $b$ being non-attracting in \cndref{diff-cnd} implies that $S[y,b) = \infty$ (see Table 6.2 in Chapter 15 of \cite{KarlinT81}).  Thus $s(r) \rightarrow \infty$ as $r \rightarrow b$ and the first condition of \eqref{eq-pi-bd-conds} holds.  When $b = \infty$, it is a natural boundary and this condition follows from \lemref{M-finite}.

Next, the fifth condition of \eqref{eq-pi-bd-conds}  relates the constants $K_1$ and $K_2$ by requiring
$$K_1 m(y) S(y) + K_{2} m(y) =0.$$
So  $K_{2} = -K_{1} S(y)$.  Using this expression for $K_2$ in $\pi_1$ results in
\begin{equation}
\label{pi1-form}
\pi_1(x) = K_1   m(x) [S(x) - S(y)], \quad y \leq x  \leq z.
\end{equation}

The continuity at $z$ of $\pi_1$ and $\pi_2$ in the third condition of \eqref{eq-pi-bd-conds} gives
$K_1 m(z) [S(z) - S(y)] = K_4 m(z)$ or $K_4 = K_1[S(z) - S(y)]$.  Hence 
$$\pi_2(x) = K_1 m(x) [S(z)-S(y)].$$

At this point, we conclude that the density $\pi$ is defined piecewisely  by \eqref{eq-pi-defn}.  To determine the constant $K_1$ of \eqref{eq-pi-defn}, we use the fact that $\pi$ is a probability measure so $K_1$ becomes the normalizing constant.  Note that
$$\begin{aligned}
\int_{y}^{z} S[y,x] \d M(x) +  S[y,z] M[z,b) & =  \int_{y}^{z} M[u, z] \d S(u) + \int_{y}^{z} M[z,b) \d S(u) \\ &= \int_{y}^{z}  M[u,b) \d S(u) \\ & = \mbox{$\frac{1}{2}$}(\zeta(z) -\zeta(y))
\end{aligned}$$
so $K_1 = \frac{2}{\zeta(z) - \zeta(y)}.$

Turning to the determination of $\kappa$, observe $\frac{1}{2}(\sigma^2\pi_2)'(x) - \mu(x)\pi_2(x) = 0$ and $\frac{1}{2}(\sigma^2\pi_1)'(x) - \mu(x)\pi_1(x) = \frac{K_1}{2}$, where $\pi_1$ is defined by \eqref{pi1-form}.  Thus both the fourth and sixth conditions of \eqref{eq-pi-bd-conds} imply that $\kappa = \frac{K_1}{2} = (\zeta (z) - \zeta (y))^{-1}$.

Finally we note that the functions $\pi_1$ and $\pi_2$ are sufficiently smooth for the integration by parts argument to be valid which establishes the result.  The density $\pi$ is therefore  given by \eqref{eq-pi-defn}.
\end{proof}

\begin{rem}
Suppose $a$ is a reflecting boundary point; that is, $M(a,x] < \infty$ for each $x \in {\cal I}$ and $M(\{a\}) = 0$. By \lemref{M-finite}, $M(a,b) < \infty$ and the uncontrolled process $X_0$ of \eqref{dyn} has a stationary distribution.  By equation (5.34) on p.\ 221 of \cite{KarlinT81}, the stationary density has the form
$$\pi(x) = K_1 m(x) S(x) + K_2 m(x)$$
and the constants $K_1$ and $K_2$ are such that $\pi \geq 0$ and $\int_a^b \pi(x)\, \d x = 1$.  The stationarity condition for the reflected process $X_0$ (compare with \eqref{stat-cnd}) is
\begin{equation} \label{stat-cnd-refl}
\int_{\mathcal  I} Af(x)\, \pi(x)\d x + \kappa f'(a) = 0, \qquad \forall f \in C^2_b({\mathcal  I})
\end{equation}
and the same type of analysis as in the proof of \propref{sS-stationary} determines that $K_1 = 0$ so the stationary density is the normalized density $m$ of the speed measure.  Moreover, $\kappa = \lim_{t\rightarrow \infty} t^{-1} \E_{x_0}[L_a(t)]$ in which $L_a$ denotes the local time process of $X_0$ at $\{a\}$.
\end{rem}

Next, we derive a representation of the long-term average cost of an arbitrary $(s,S)$-ordering policy; let $s=y$ and $S=z$ and define $(\tau,Y)$ by \eqref{sS-tau-def}.  Using the strong Markov property observe that the inventory process $X$ associated with such a policy is a (possibly delayed) renewal process.  Except for the first cycle of the process when $x_0 > y$, each cycle starts at $z$ and lasts until the next hitting time of $X$ at $y$ at which time $X$ is reset to $z$ and the evolution starts afresh.  Thus $X$ consists of independent copies of the uncontrolled process $X_0$ (satisfying \eqref{dyn}) between renewal times.  

Observe that when $b$ is an entrance boundary, \corref{cor-b-entrance} implies that each $(y,b)$ ordering policy incurs an infinite cost.  Since we are interested in minimizing the cost, we may drop such ordering policies from further consideration.  

In light of Conditions~\ref{diff-cnd} and \ref{cost-cnds} and the renewal structure for the process $X$ associated with a $(y,z)$ ordering policy, the assumptions of the renewal-reward theorem are satisfied.  As a result, the representation of the long-term average cost can now be given.

\begin{prop}\label{prop-sS-cost}
Assume Conditions \ref{diff-cnd}  and \ref{cost-cnds} hold.  Fix $x_0 \in {\cal I}$, let $(y,z) \in \mathcal R$ with $z < b$ and let $X$ denote the inventory process satisfying \eqref{controlled-dyn} with $X(0-)=x_0$.  Then for the $(\tau, Y)$ policy defined in \eqref{sS-tau-def}, we have
\begin{equation}\label{eq-c0-integral-slln}
 \lim_{t\rightarrow \infty} \frac{1}{t} \int_{0}^{t} c_{0}(X(s))\d s = \frac{Bg_0(y,z)}{B\zeta(y,z)} \;\;(a.s.)
\end{equation} 
and, more importantly,
\begin{equation}
\label{eq-sS-cost}
J(\tau, Y) = \frac{c_{1}(y,z) + Bg_{0}(y,z)}{B\zeta(y,z)}.
\end{equation}
\end{prop}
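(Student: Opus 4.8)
The plan is to exploit the renewal structure of $X$ under the $(s,S)$ policy noted just before the statement: the ordering times $\{\tau_k\}$ are renewal epochs, and on each interval $(\tau_{k-1},\tau_k]$ the process $X$ is an independent copy of the uncontrolled diffusion $X_0$ started at $z$ and run until it first hits $y$. Hence, apart from the (possibly atypical) first cycle when $x_0 \neq z$, the cycle lengths $\tau_k - \tau_{k-1}$ are i.i.d.\ copies of $\tau_y$ under $\P_z$, and the holding cost accrued over a cycle consists of i.i.d.\ copies of $\int_0^{\tau_y} c_0(X_0(s))\,\d s$ under $\P_z$. \propref{cor-g0-psi-fns} supplies the two crucial per-cycle expectations, namely $\E_z[\tau_y] = \zeta(z)-\zeta(y) = B\zeta(y,z)$ and $\E_z[\int_0^{\tau_y} c_0(X_0(s))\,\d s] = g_0(z)-g_0(y) = Bg_0(y,z)$, both finite, with $B\zeta(y,z) = \E_z[\tau_y]$ strictly positive since the continuous path started at $z>y$ needs positive time to reach $y$.

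First I would establish the almost-sure statement \eqref{eq-c0-integral-slln}. Writing $R(t) = \int_0^t c_0(X(s))\,\d s$ for the cumulative holding cost, the renewal-reward strong law applies directly, because the per-cycle reward and cycle length are integrable and the cycle length has positive mean; it yields $R(t)/t \to Bg_0(y,z)/B\zeta(y,z)$ almost surely. For the ordering charge, note that $X$ has continuous paths so $X(\tau_k-) = y$ exactly, whence each order costs precisely $c_1(y,z)$; thus the cumulative ordering cost up to time $t$ equals $c_1(y,z)\,N(t)$, where $N(t) = \#\{k : \tau_k \le t\}$. The elementary renewal theorem gives $N(t)/t \to \kappa = 1/B\zeta(y,z)$ almost surely, consistent with the order frequency identified in \propref{sS-stationary}, so the per-unit-time ordering cost tends almost surely to $c_1(y,z)/B\zeta(y,z)$. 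Adding the two limits produces the almost-sure total cost rate $(c_1(y,z)+Bg_0(y,z))/B\zeta(y,z)$.

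To obtain \eqref{eq-sS-cost}, which is phrased through the $\limsup$ of the \emph{expected} time-averaged cost in \eqref{obj-fn}, I would upgrade these almost-sure limits to convergence in mean. Since $c_0 \ge 0$ and $c_1 \ge k_1 > 0$, Fatou's lemma applied to the almost-sure limit immediately gives the lower bound $\liminf_t t^{-1}\E[\,\cdots\,] \ge (c_1(y,z)+Bg_0(y,z))/B\zeta(y,z)$. For the matching upper bound on the $\limsup$, I would dominate the cumulative cost up to time $t$ by its value at the next renewal, using $R(t) \le \sum_{k=1}^{N(t)+1}\int_{\tau_{k-1}}^{\tau_k} c_0(X(s))\,\d s$ and $c_1(y,z)N(t) \le c_1(y,z)(N(t)+1)$, and then apply Wald's identity together with the mean form $\E[N(t)]/t \to 1/B\zeta(y,z)$ of the elementary renewal theorem; the bounded overshoot term contributes only $O(1)$ and washes out after dividing by $t$.

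The main obstacle is precisely this last passage from almost-sure to mean convergence, i.e.\ controlling the overshoot in the final (incomplete) cycle and the delayed first cycle so that the $\limsup$ in \eqref{obj-fn} is actually attained and equals the almost-sure rate. The delayed first cycle, present when $x_0 \neq z$, contributes only finite expected cost and time and so has no effect in the $t \to \infty$ limit; and because all rewards are nonnegative with integrable per-cycle totals furnished by \propref{cor-g0-psi-fns}, the standard mean renewal-reward argument closes the upper bound, turning the $\limsup$ into a genuine limit equal to $(c_1(y,z)+Bg_0(y,z))/B\zeta(y,z)$, which is \eqref{eq-sS-cost}.
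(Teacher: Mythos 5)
Your proposal is correct and follows essentially the same route as the paper: both proofs exploit the renewal structure of the $(s,S)$ policy, use \propref{cor-g0-psi-fns} for the per-cycle expectations $\E_z[\tau_y]=B\zeta(y,z)$ and $\E_z[\int_0^{\tau_y}c_0(X_0(s))\,\d s]=Bg_0(y,z)$, and invoke the (delayed) renewal-reward theorem for both the almost-sure and the mean statements. The only cosmetic difference is that the paper additionally recomputes the limit constant as $\int_{\mathcal I}c_0(v)\pi(v)\,\d v$ via the stationary density of \propref{sS-stationary}, whereas you obtain it directly from the per-cycle identities and spell out the Fatou/Wald upper-bound step that the paper subsumes under the delayed renewal-reward theorem.
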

\begin{proof}
By Proposition \ref{sS-stationary}, the controlled process $X$ is ergodic with stationary density $\pi$
given in \eqref{eq-pi-defn}. Moreover, due to the definitions of $g_0$ and $\kappa$ in \eqref{eq-g0-fn} and \eqref{norm-const}, respectively,   we compute
\begin{align*}
 \int_{\I} c_{0} (v) \pi(v) \d v & =  2\kappa \int_{y}^{z} c_{0}(v) m(v) S[y,v]\d v + 2\kappa \int_{z}^{b} c_{0}(v) m(v) S[y,z]\d v \\
   & = 2\kappa \int_y^z c_0(v) \int_y^v \d S(u) \d M(v) +  2\kappa \int_{z}^{b} c_{0}(v)  \int_y^z \d S(u) \d M(v) \\
   &  = 2\kappa \int_y^z \int_u^z c_0(v) \d M(v) \d S(u) + 2\kappa \int_y^z \int_z^b c_0(v) \d M(v) \d S(u) \\
    &  = \kappa \int_y^z \int_u^b 2 c_0(v) \d M(v) \d S(u) \\
    & = \frac{Bg_0(y,z)}{B\zeta(y,z)} < \infty.
\end{align*}

Observe that when $x_{0} > y$, the first order time $\tau_{1 } > 0$ and  the first cycle cost $\int_{0}^{\tau_{1}} c_{0}(X(s))\, \d s$ is independent of the later cycle costs; furthermore, by \propref{cor-g0-psi-fns}, the first cycle cost has finite expectation.  However, when $x_0 \neq z$, then $X(0) \neq X(\tau_k)$ for $k=1,2,3,\ldots$ so the process $X$ on the interval $[0,\tau_1)$ has a different distribution than $X$ on each later cycle $[\tau_k,\tau_{k+1})$.  When $x_{0}\le y$, the first order time $\tau_{1} =0$ so $X(0) = z = X(\tau_k)$ for each $k$.  By a similar argument as for Theorem 9 of Section~IV.5 in \cite{mand:68},  the random variables $\left\{\int_{\tau_k}^{\tau_{k+1}} c_0(X(s)) \d s: k=1,2,3, \ldots\right\}$, are mutually independent and identically distributed with finite expectation and therefore is a renewal-reward process.  Notice that the sequence $\left\{\int_{\tau_k}^{\tau_{k+1}} c_0(X(s)) \d s: k=0,1,2,3, \ldots\right\}$ forms a delayed renewal-reward process regardless of the relation of $x_0$ to $y$, and hence \eqref{eq-c0-integral-slln} follows from the elementary renewal-reward theorem. In addition, the delayed renewal-reward theorem implies that 
\begin{equation} \label{lta-running-cost}
\lim_{t\rightarrow \infty} \frac{1}{t}\E\left[ \int_{0}^{t} c_{0}(X(s))\d s \right]= \frac{Bg_0(y,z)}{B\zeta(y,z)}.
\end{equation}

Turning to the ordering costs, recall $\kappa = 1/ B\zeta(y,z)$ gives the long term expected frequency of interventions for the $(\tau, Y)$-policy in \eqref{sS-tau-def}.  As a result, we have from  \eqref{eq-path-integral-c0-yz} that  
$$\begin{aligned} 
\lim_{t\to \infty} t^{-1} \E\left[ \sum_{k=1}^{\infty} c_{1} (X(\tau_{k}-), X(\tau_{k}))I_{\{\tau_{k} \le t \}}\right] 
& =  \lim_{t\to \infty} t^{-1} \E\left[ \sum_{k=1}^{\infty} c_{1} (y,z) I_{\{\tau_{k }\le t \}}\right] \\ & = c_{1}(y,z) \kappa = \frac{c_{1}(y,z)}{B\zeta(y,z)}.
\end{aligned}$$
Combining the above two displayed equations gives \eqref{eq-sS-cost}.
\end{proof}

Motivated by Proposition \ref{prop-sS-cost} and using $g_0$ and $\zeta$ defined in \eqref{eq-g0-fn} and \eqref{eq-psi-fn} respectively, we define
\begin{equation} \label{eq-F-fn}
F(y,z) : =\frac{c_1(y,z) + Bg_0 (y,z)} {B\zeta(y,z)}, \quad (y, z) \in \mathcal R, 
\end{equation}

Under the mild conditions of this paper, there exists a minimizing pair $(y_*,z_*)$ for the function $F$.

\begin{prop} \label{F-optimizers}
Assume Conditions \ref{diff-cnd} and \ref{cost-cnds} hold.  Then there exists a pair $(y_*, z_*) \in \mathcal R$ such that
\begin{equation} \label{eq-F-optimal}
F(y_*, z_*) = F_* := \inf\set{F(y,z): (y,z) \in \mathcal R}.
\end{equation}
Furthermore, when $c_1 \in C^1({\cal R})$, an optimizing pair $(y_*,z_*)$ satisfies the following first-order conditions:
\begin{itemize}
\item[(a)] When $a$ is a natural boundary, the first-order optimality conditions imply
\begin{equation}
\label{eq-1st-op}
F_* = \frac{-\frac{\partial c_1}{\partial y} (y_*, z_*) + g_0'(y_*)}{ \zeta'(y_*)} = \frac{ \frac{\partial c_1}{\partial z} (y_*, z_*) + g_0'(z_*)}{ \zeta'(z_*)}.
\end{equation}
\item[(b)] When $a$ is an attainable boundary (regular or exit), an optimal pair $(y_*,z_*)$ may have $y_*=a$.  When $y_* \neq a$, \eqref{eq-1st-op} holds but, when $y_* = a$, the first-order conditions imply
\begin{equation} \label{eq-1st-op2}
\frac{-\frac{\partial c_1}{\partial y}(a, z_*) + g_0'(a)}{\zeta'(a)} \leq F_* = \frac{ \frac{\partial }{\partial z} c_1 (a, z_*) + g_0'(z_*)}{ \zeta'(z_*)}.
\end{equation}
\end{itemize}
\end{prop}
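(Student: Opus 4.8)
The plan is to establish existence by presenting $F$ as a continuous, coercive function whose sublevel sets are precompact in the feasible region, and then to extract the first-order conditions by differentiating $F=(c_1+Bg_0)/(B\zeta)$ at the optimizer. First I would record that $F$ is continuous and strictly positive on $\mathcal R$, since $c_1,g_0,\zeta$ are continuous and $B\zeta(y,z)=\zeta(z)-\zeta(y)>0$ for $y<z$ ($\zeta$ being strictly increasing). Fixing any $(y_0,z_0)\in\mathcal R$ and putting $F_0:=F(y_0,z_0)<\infty$, it is enough to show that $\{F\le F_0\}$ has compact closure inside the feasible set. Near the diagonal, $B\zeta(y,z)\to0$ while $c_1(y,z)\ge k_1>0$, so $F\to\infty$. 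As $z\nearrow b$, \eqref{infinite-dbl-intgrl-at-b} forces $Bg_0(y,z)\to\infty$; since $F\ge Bg_0/B\zeta$, either $B\zeta$ remains bounded (and $F\to\infty$ at once) or $B\zeta\to\infty$, in which case L'H\^opital reduces the ratio to $g_0'(z)/\zeta'(z)=\bigl(\int_z^b c_0\d M\bigr)/M[z,b)\ge\inf_{[z,b)}c_0\to\infty$ by inf-compactness of $c_0$. Hence minimizers stay off the diagonal and away from $b$.

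The delicate case is $y\to a$. When $a$ is attainable (regular or exit) we have $a\in\mathcal E$ and $g_0,\zeta$ extend continuously to $a$, so I would include the edge $\{y=a\}$ in the (closed) feasible set and allow $y_*=a$. When $a$ is natural I must instead show $F(y,z)\to\infty$ as $y\to a$, and this is the step I expect to be the main obstacle. The key bound is again $F\ge Bg_0/B\zeta$, where $Bg_0/B\zeta$ is the $\zeta'$-weighted average of $g_0'/\zeta'=\bigl(\int_u^b c_0\d M\bigr)/M[u,b)$ over $[y,z]$. Because $a$ is attracting, $S(a)>-\infty$; together with $a$ natural this forces $M(a,b)=\infty$ (an attracting boundary with finite nearby speed mass would be a regular boundary), and by \lemref{M-finite} this infinite mass accumulates at $a$. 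Combining this with $c_0\to\infty$ at $a$: given $L$, pick $u_L$ with $c_0>L$ on $(a,u_L)$, so for $y<u_L$,
\[
\frac{Bg_0(y,z)}{B\zeta(y,z)}\ \ge\ L\,\frac{\zeta(u_L)-\zeta(y)}{\zeta(z)-\zeta(y)}\ \xrightarrow[y\to a]{}\ L,
\]
since $\zeta(y)\to-\infty$ as $y\to a$ (the hitting time $\E_z[\tau_y]=\zeta(z)-\zeta(y)$ diverges because $a$ is unattainable). As $L$ is arbitrary, $F(y,z)\to\infty$.

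With these three estimates in hand, $\{F\le F_0\}$ lies in a region $\{a'\le y,\ z\le z',\ z-y\ge\delta\}$ with $z'<b$, $\delta>0$, and $a'>a$ when $a$ is natural (while $a'=a$ is permitted when $a$ is attainable); its closure is compact in $\mathcal E^2$. Continuity of $F$ (lower semicontinuity up to the retained edge $y=a$ in the attainable case) then yields a minimizer $(y_*,z_*)$, and the coercivity guarantees $y_*<z_*<b$, so $(y_*,z_*)\in\mathcal R$, with $y_*=a$ possible only when $a$ is attainable.

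Finally, assuming $c_1\in C^1(\mathcal R)$, I would write $F=N/D$ with $N=c_1+Bg_0$ and $D=B\zeta$, so that $N_y=\frac{\partial c_1}{\partial y}-g_0'(y)$, $N_z=\frac{\partial c_1}{\partial z}+g_0'(z)$, $D_y=-\zeta'(y)$, and $D_z=\zeta'(z)$. At an interior optimizer---the only possibility when $a$ is natural, and the case $y_*>a$ when $a$ is attainable---both partials vanish; from $\partial_yF=(N_yD-ND_y)/D^2=0$ together with $F_*=N/D$ one obtains $F_*=N_y/D_y$ and, similarly, $F_*=N_z/D_z$, which is precisely \eqref{eq-1st-op}. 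When $a$ is attainable and $y_*=a$, the level $z_*$ is still interior so $\partial_zF=0$ gives the displayed equality in \eqref{eq-1st-op2}; but $y_*$ now sits at the left endpoint of its range, so minimality only forces the one-sided inequality $\partial_yF(a,z_*)\ge0$. Since $D>0$ and $D_y=-\zeta'(a)<0$, this rearranges to $N_y\ge F_*D_y$, i.e. $\frac{-\frac{\partial c_1}{\partial y}(a,z_*)+g_0'(a)}{\zeta'(a)}\le F_*$, completing \eqref{eq-1st-op2}.
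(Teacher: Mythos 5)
Your overall strategy---show that $F$ blows up on the boundary of $\mathcal R$, extract a minimizer by continuity, then read off the first-order conditions from $\partial F=0$ (or $\partial_y F\ge 0$ when $y_*=a$)---is the same as the paper's, and your treatment of the diagonal at interior points, of $z\nearrow b$ with $y$ fixed, of $y\searrow a$ with $z$ fixed, and of the first-order conditions is essentially sound. One small slip: since $c_0>L$ is only guaranteed on $(a,u_L)$, your argument actually yields $Bg_0(y,z)\ge L\,[\zeta(u_L)-\zeta(y)]-2L\,M[u_L,b)\,S(a,u_L]$ rather than the displayed inequality; the correction term is a constant while $\zeta(u_L)-\zeta(y)\to\infty$, so the conclusion $\liminf_{y\searrow a}F(y,z)\ge L$ survives, and this is cosmetic.

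The genuine gap is the sentence ``With these three estimates in hand, $\{F\le F_0\}$ lies in a region $\{a'\le y,\ z\le z',\ z-y\ge\delta\}$.'' Your three estimates are single-variable limits and do not control the three vertices $(a,a)$, $(b,b)$ and $(a,b)$ of $\overline{\mathcal R}$, where both coordinates degenerate simultaneously. Concretely: when $b$ is natural, $\lim_{z\nearrow b}\zeta(z)=\infty$ (since $N(b)=\infty$), so near $(b,b)$ the quantity $B\zeta(y,z)$ is an $\infty-\infty$ expression that need not tend to $0$, and your diagonal bound $F\ge k_1/B\zeta$ gives nothing; the same failure occurs at $(a,a)$ when $a$ is natural, since $\zeta(y)\to-\infty$. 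At $(a,b)$ your $z\nearrow b$ estimate degenerates because the denominator $\zeta(z)-\zeta(y)$ now also blows up through $\zeta(y)\to-\infty$, and the $y\searrow a$ estimate degenerates symmetrically, so a minimizing sequence could a priori accumulate at that corner with $F$ bounded. These are precisely cases (iii), (v) and (vi) of the paper's proof; the vertex $(a,b)$ in particular requires splitting the interval at a fixed interior point $y_0$ and combining the two edge estimates, with a further case distinction on whether $a$ is attainable and whether $b$ is entrance or natural. Until the vertices are handled, the claimed compact containment of the sublevel set---equivalently, the nonexistence of minimizing sequences escaping to a vertex---is unproved, so the existence half of your argument is incomplete; the first-order-condition half is fine once existence is secured.
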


\begin{proof}
We examine the limiting behavior of $F$ as the pair $(y,z)$ approaches the boundaries of ${\cal R}$.

\noindent
{\em (i) The diagonal $y=z$.}\/ For each $\overline{x} \in {\cal E}$, $\zeta(\overline{x}) < \infty$ so $y, z \rightarrow \overline{x}$ implies $B\zeta(y,z) = \zeta(z) - \zeta(y) \rightarrow 0$.  By Condition \ref{cost-cnds}(b), $c_1 \geq k_1 > 0$ so
\begin{equation}
\label{eq-limit-diagonal}
\lim_{(z-y) \rightarrow 0} F(y,z) \geq \lim_{(y,z) \rightarrow (\overline x, \overline x)} \frac{c_1(y,z)}{B\zeta(y,z)} \geq \lim_{(y,z) \rightarrow (\overline x, \overline x)} \frac{k_1}{B\zeta(y,z)} = \infty.
\end{equation}

The analysis of the diagonal boundary related $F$ to the ratio $c_1/B\zeta$.  Observe that we may also obtain a lower bound on $F$ by
\begin{equation} \label{F-lb1}
F(y,z) \geq \frac{Bg_0(y,z)}{B\zeta(y,z)} = \frac{g_0(z) - g_0(y)}{\zeta(z) - \zeta(y)}.
\end{equation}
This bound will be utilized in the analysis of the other boundaries.

\noindent
{\em (ii) The upper boundary $z=b$ with $y \in {\mathcal I}$ fixed.}\/
Recalling the definition of $\zeta $ in \eqref{eq-psi-fn}, we have 
\begin{equation}
\label{eq-B-psi=N(b)}
\mbox{$\frac{1}{2}$} B\zeta (y,z) = \int_{y}^{z} M[u,b) \d S(u) \to \int_{y}^{b }M[u,b) \d S(u) =: N(b), \quad \text{ as }z \nearrow b,
\end{equation}
in which the function $N$ is defined by equation (7.2) on page 242 of \cite{KarlinT81}.

When $N(b) < \infty$, with reference to Table 6.2 on page 234 (and also Table 7.1 on page 250) of  \cite{KarlinT81},  $b$ is an entrance boundary point.  In this case,  $\lim_{z\nearrow b} B\zeta (y,z) < \infty$.  But \eqref{infinite-dbl-intgrl-at-b} of Condition \ref{cost-cnds}(a)  implies that $\lim_{z\nearrow b} Bg_{0}(y, z) = \infty$. Therefore we have $\lim_{z\nearrow b} F(y,z) = \infty$.

 When $N(b) = \infty$, Table 6.2 of \cite{KarlinT81} shows that $b$ is a natural boundary and $\lim_{z\nearrow b} B\zeta (y,z) =\infty$.  Again, we have  $\lim_{z\nearrow b} Bg_{0}(y, z) = \infty$ by  \eqref{infinite-dbl-intgrl-at-b}. Then an applications of   L'H\^{o}pital's rule with the definitions of $g_0$ and $\zeta$ in \eqref{eq-g0-fn} and \eqref{eq-psi-fn} yields
$$\lim_{z \nearrow b} F(y,z) \ge \lim_{z \nearrow b} \frac{g_0(z) - g_0(y)}{\zeta(z) - \zeta(y)} = \lim_{z\nearrow b} \frac{\int_z^b c_0(v)\, \d M(v)}{\int_z^b 1\, \d M(v)}.$$
Since $c_0$ is inf-compact and $b \notin {\cal E}$, for each $K > 0$, there exists some $z_K \in {\cal I}$ such that $c_0(z) \geq K$ for all $z \geq z_K$.  Hence for all $z \geq z_K$,
$$\frac{\int_z^b c_0(v)\, \d M(v)}{\int_z^b 1\, \d M(v)} \geq \frac{\int_z^b K\, \d M(v)}{\int_z^b 1\, \d M(v)} = K.$$
Since $K$ is arbitrary, it follows that\begin{equation}
\label{eq-limit-z-to-b}
 \lim_{z\nearrow b} F(y,z) \ge \lim_{z\nearrow b}  \frac{g_0(z) - g_0(y)}{\zeta(z) - \zeta(y)} =\infty.
\end{equation}
Thus \eqref{eq-limit-z-to-b} is valid both when $b$ is a natural boundary and when it is an entrance boundary.

\noindent
{\em (iii) The vertex $(b,b)$.}\/ When $(y,z) \rightarrow (b,b)$  and $b$ is a natural boundary, for any $K> 0$, taking $y > z_K$ directly (with $z_K$ specified above) gives
$$F(y,z) \geq  \frac{g_0(z) - g_0(y)}{\zeta(z) - \zeta(y)} = \frac{\int_y^z \int_u^b c_0(v) \d M(v) \d S(u)}{\int_y^z \int_u^b 1 \d M(v) \d S(u)} \geq K.$$ When $b$ is an entrance boundary, \eqref{eq-limit-diagonal} gives $\lim_{(y,z)\to (b,b)} F(y,z)= \infty$.

It now remains to analyze the asymptotic behavior of $F(y,z)$ on the boundary $y=a$ with $z\in {\cal I}$ fixed, and at the two vertices $(a,a)$ and $(a,b)$.  We begin by determining the asymptotic behavior of $B\zeta(y,z)$ as $y\searrow a$ with $z \in {\cal I}$ fixed.  Note that for any $(y,z) \in \mathcal R$ with $z\in \I$ fixed, by the definition of $\zeta $ in \eqref{eq-psi-fn},
\begin{equation}\label{fn-Sigma-B-psi}
\begin{aligned}
 \mbox{$\frac{1}{2}$} B\zeta(y,z) & =  \int_{y}^{z} M[u,b) \d S(u)
      = \int_{y}^{z} M[u,z] \d S(u) + M[z,b) S[y,z] \\
    &  \stackrel{y \downarrow a}{\longrightarrow} \int_{a}^{z} M[u,z] \d S(u) + M[z,b) S(a,z]\\
    &=:\Sigma(a)  +  M[z,b) S(a,z], 
\end{aligned}
\end{equation}
in which the function $\Sigma$ is defined by equation (6.10) on page 229 of \cite{KarlinT81}.  Moreover by Table~6.2 on page 234 of \cite{KarlinT81}, the point $a$ is attainable or unattainable according to whether $\Sigma(a) < \infty$ or $\Sigma(a) = \infty$, respectively.  By Proposition \ref{M-finite}, $M[z,b) < \infty$ for any $z\in \I$ fixed. Also, for any $z\in \I$ fixed, we have $S(a,z] < \infty$ by Condition \ref{diff-cnd}(a). Thus it follows from \eqref{fn-Sigma-B-psi} that
\begin{eqnarray} \label{psi-asymptotics-at-a1}
\lim_{y\rightarrow a} B\zeta(y,z) &<& \infty, \qquad \mbox{when $a$ is attainable, and } \\ \label{psi-asymptotics-at-a2}
\lim_{y\rightarrow a} B\zeta(y,z) &=& \infty, \qquad \mbox{when $a$ is unattainable.}
\end{eqnarray}
We now examine the remaining cases for the asymptotic behavior of $F(y,z)$.

\noindent
{\em (iv) The left boundary $y=a$ with $z\in {\cal I}$ fixed.}\/  Fix $z \in {\cal I}$.  The asymptotic behavior of $F(y,z)$ at this boundary depends on the type of boundary point for $a$. 

\noindent $\bullet$
  When $a$ is unattainable, 
	a simple comparison between $\zeta$ and $g_0$ in \eqref{eq-g0-fn} and \eqref{eq-psi-fn} establishes that $\lim_{y \rightarrow a} Bg_0(y,z) = \infty$.  Due to \eqref{psi-asymptotics-at-a2}, we can apply L'H\^{o}pital's rule to obtain
\begin{equation}
\label{eq-limit-ya-F(y,z)}
 \lim_{y\downarrow a} F(y,z) \ge \lim_{y\downarrow a} \frac{g_0(z) - g_0(y)}{\zeta(z)-\zeta(y)} = \lim_{y\downarrow a} \frac{\int_y^b c_0(v) \d M(v)}{\int_y^b 1 \d M(v)} =\lim_{y\downarrow a}  \frac{\int_y^b c_0(v) \d M(v)}{M[y,b)}.
\end{equation}
Furthermore, By Lemma 6.3(v) on page 231 of \cite{KarlinT81}, under the conditions that $S(a, x] < \infty$ for any $x\in \I$ and $\Sigma(a) = \infty$, we must have $M(a, x] = \infty$ for any $x\in \I$. Thus we have $\lim_{y\downarrow a} M[y,b) = \infty$. This limit, along with the estimate in \eqref{eq-M-c0-integral}, establishes that $\lim_{y\downarrow a} \int_y^b c_0(v) \d M(v)  = \infty$. Thus we can again apply  L'H\^{o}pital's rule to the rightmost expression in \eqref{eq-limit-ya-F(y,z)} to obtain
\begin{equation}
\label{eq-limit-F-y-to-a}
 \lim_{y\downarrow a} F(y,z) \ge \lim_{y\downarrow a} \frac{g_0(z) - g_0(y)}{\zeta(z)-\zeta(y)} \ge \lim_{y\downarrow a}c_{0} (y) = \infty,
\end{equation} where the last equality follows from the inf-compactness assumption for the function $c_{0}$ in Condition \ref{cost-cnds}.

 \noindent $\bullet$
   When $a$ is an attainable boundary, \eqref{psi-asymptotics-at-a1} implies that one may pass to the limit to obtain
$$\lim_{y\rightarrow a} F(y,z) = \frac{c_1(a,z) + g_0(z) - g_0(a)}{\zeta(z) - \zeta(a)}.$$
It is possible that $y_*=a$ if $g_0(a) > -\infty$.  This possibility is accounted for in the optimization \eqref{eq-F-optimal} since $a \in {\cal E}$.

 \noindent
{\em (v) The vertex $(a,b)$.}\/  We need to consider separately the asymptotics  when $a$ is attainable and  when $a$ is unattainable.

\noindent $\bullet$  Suppose $a$ is unattainable. We fix some $y_{0}\in \I$ and write
\begin{equation}\label{eq-F>g0/psi}
F(y,z)  \ge \frac{g_{0}(z) - g_{0}(y)}{\zeta(z) - \zeta(y) } = \frac{g_{0}(z) - g_{0}(y_{0}) +g_{0}(y_{0})  -g_{0}(y) }{\zeta(z) -\zeta(y_{0})  + \zeta(y_{0}) -\zeta(y) }.
\end{equation}
By \eqref{eq-limit-F-y-to-a}, for any $K > 0$ we can pick some $y_{K}\in \I$ so that
\begin{equation}
\label{eq-g0>psi-y-a}
  g_{0}(y_{0})  -g_{0}(y) > K (\zeta(y_{0}) -\zeta(y)) \text{ for all }a < y < y_{K}.
\end{equation}
Similarly, \eqref{eq-limit-z-to-b} implies that we can pick some $z_{K} \in \I$ so that
\begin{equation}
\label{eq-g0>psi-z-b}
 g_{0}(z) - g_{0}(y_{0}) > K (\zeta(z) - \zeta(y_{0})) \text{ for all  } z_{K }< z < b.
\end{equation}
Putting \eqref{eq-g0>psi-y-a} and \eqref{eq-g0>psi-z-b} into \eqref{eq-F>g0/psi} yields that $F(y,z)  > K$ for all $a < y < y_{K}$  and $z_{K }< z < b$. Since $K>0$ is arbitrary, this shows that $$\liminf_{(y,z)\rightarrow (a,b)} F(y,z)  = \infty.$$

\noindent $\bullet$ Suppose $a$ is attainable.   Arbritrarily fix $y_0 \in {\cal I}$.  Due to Condition \ref{diff-cnd}(a), Proposition \ref{M-finite},  \eqref{eq-B-psi=N(b)}, and \eqref{fn-Sigma-B-psi}, we have for any $a \le y < z < b$
\begin{equation} \label{psi-est-5}
\begin{aligned} 
\mbox{$\frac{1}{2}$}B\zeta(y,z) & = \mbox{$\frac{1}{2}$}[\zeta(z) - \zeta(y_{0})] +  \mbox{$\frac{1}{2}$}[\zeta(y_{0}) - \zeta(y) ] \\
& \le N(b) + \Sigma(a) + M[y_{0}, b) S(a, y_{0}] < \infty.
\end{aligned}
\end{equation}
We now consider the two cases when $b$ is an entrance or a natural boundary separately.
\begin{itemize}
  \item[$\circ$] When $b$ is an entrance boundary, $N(b) < \infty$.  Assumption \eqref{infinite-dbl-intgrl-at-b} of Condition \ref{cost-cnds} implies that for any $K> 0$, we can select some $z_{K} \in \I$ so that for all $b> z > z_{K}$
  $$ \mbox{$\frac{1}{2}$}Bg_{0}(y,z) \ge \mbox{$\frac{1}{2}$} [g_{0}(z) - g_{0}(y_{0}) ] > K [N(b) + \Sigma(a) + M[y_{0}, b) S(a, y_{0}]]$$
and as a result
$$ \lim_{(y,z) \to (a,b)} F(y,z) \ge \lim_{(y,z) \to (a,b)}  \frac{Bg_{0}(y,z)}{B\zeta(y,z) } = \infty.$$

\item[$\circ$] When $b$ is a natural boundary, $N(b) = \infty$.  By \eqref{eq-limit-z-to-b} in case (ii), we can pick some $z_1\in \I$ such that for all $z_{1} < z < b$,
\begin{displaymath}
\frac{g_{0}(z) - g_{0}(y_{0})}{ \zeta(z) - \zeta(y_{0}) } > 2 K.
\end{displaymath}
By \eqref{psi-est-5} and \eqref{eq-B-psi=N(b)}, we can pick some $z_2\in \I$ such that for all $z_2 < z < b$ and  $a \le y < y_0$,
$$ \frac{\zeta(y_{0}) - \zeta(y) }{\zeta(z) - \zeta(y_{0}) }  < 1. $$
Set $z_{K} = z_1 \vee z_2$. Then we have for all $z_{K}< z < b$ and $a \le y < y_{0}$,
\begin{displaymath}\begin{aligned}
F(y,z) &  \ge \frac{Bg_{0}(y,z)}{B\zeta(y,z)} \ge \frac{g_{0}(z) - g_{0}(y_{0})}{ \zeta(z) - \zeta(y_{0}) + \zeta(y_{0}) - \zeta(y)} \\
          & \ge  \frac{ 2K[ \zeta(z) - \zeta(y_{0})  ]}{ [\zeta(z) - \zeta(y_{0}) ] +  [\zeta(z) - \zeta(y_{0}) ] } = K.
\end{aligned}\end{displaymath} Since $K>0$ is arbitrary, we conclude that $\liminf_{(y,z)\rightarrow (a,b)} F(y,z)  = \infty.$
\end{itemize}

\noindent
{\em (vi) The vertex $(a,a)$.}\/  When $a$ is attainable, $\lim_{x\rightarrow a} \zeta(x) > -\infty$ so $F(y,z)\rightarrow \infty$ as $(y,z) \rightarrow (a,a)$ by case (i).

Now assume $a$ is unattainable so $a$ is a natural boundary. We fix some $y_{0}\in\I$. In view of \eqref{eq-limit-F-y-to-a}, for any $K> 0$, we can pick some $z_{K} \in \I$ so that for all $a < y < z < z_{K}$, we have\begin{displaymath}
\frac{g_{0}(z) - g_{0}(y_{0}) } {\zeta(z) - \zeta(y_{0}) } > K, \text{ and } \frac{g_{0}(y_{0}) - g_{0}(y) } {\zeta(y_{0}) - \zeta(y) } > K.
\end{displaymath}
Thus it follows that \begin{displaymath}\begin{aligned}
F(y,z) &  \ge \frac{Bg_{0}(y,z)}{B\zeta(y,z)}  =  \frac{g_{0}(z) - g_{0}(y_{0})+g_{0}(y_{0}) - g_{0}(y) }{ \zeta(z) - \zeta(y_{0}) + \zeta(y_{0}) - \zeta(y)}  \\
  & > \frac{ K[ \zeta(z) - \zeta(y_{0})] + K [\zeta(y_{0}) - \zeta(y)]}{ \zeta(z) - \zeta(y_{0}) + \zeta(y_{0}) - \zeta(y)} = K.
\end{aligned}\end{displaymath}
Again, since $K$ is arbitrary, it follows that $\lim_{(y,z)\rightarrow (a,a)} F(y,z) =\infty$.

As a result of this asymptotic behavior of $F$ and $F$ being a continuous function, the minimum of $F$ is achieved at some point $(y_*,z_*) \in {\cal R}$.  Note that when $y_* \neq a$, the first-order conditions on $F$ are satisfied at $(y_*,z_*)$ and these conditions can be rearranged to yield \eqref{eq-1st-op}.  For those models for which $y_* = a$, $\frac{\partial F}{\partial y}(a,z_*) \geq 0$ and this condition yields \eqref{eq-1st-op2}.
\end{proof}

We now give a technical result that will be used in the proof of Proposition \ref{prop-G-qvi}.
\begin{prop}\label{lem-h-fn}
Assume Conditions \ref{diff-cnd} and \ref{cost-cnds} hold. Let $(y_*,z_*)$ be an optimizing pair of $F$.  Suppose $y_{*} \in {\cal I}$, $c_1 \in C^1({\cal R})$ and $c_1(\cdot,z_*) \in C^2({\cal I})$.  Then 
\begin{equation}
\label{ineq-h-fn}
\mbox{$\frac{\partial^{2} c_{1}}{\partial y^{2}}$}(y_{*},z_{*}) - g_{0}''(y_{*})  + F_{*} \zeta''(y_{*}) \ge 0.
\end{equation}
\end{prop}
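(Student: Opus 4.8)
The plan is to read \eqref{ineq-h-fn} as the classical second-order necessary condition for the interior minimizer $y_*$ of the one-variable map $y \mapsto F(y,z_*)$. Since $y_* \in \I$ is an interior point and $c_1(\cdot,z_*) \in C^2(\I)$, while $g_0,\zeta \in C^2(\I)$ by the Remark following \propref{cor-g0-psi-fns}, the function $F(\cdot,z_*)$ is twice continuously differentiable in a neighborhood of $y_*$. Because $(y_*,z_*)$ minimizes $F$ over $\mathcal R$ and $y_*$ is interior, $y_*$ is an unconstrained local minimizer of $y\mapsto F(y,z_*)$, so
\begin{equation*}
\frac{\partial F}{\partial y}(y_*,z_*) = 0 \qquad\text{and}\qquad \frac{\partial^2 F}{\partial y^2}(y_*,z_*) \ge 0 .
\end{equation*}
The whole proof is to convert the second inequality into \eqref{ineq-h-fn}.

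First I would introduce the shorthand $N(y,z) := c_1(y,z) + Bg_0(y,z)$ and $D(y,z) := B\zeta(y,z)$, so that $F = N/D$ and, writing subscripts for $\partial/\partial y$,
\begin{equation*}
N_y = \frac{\partial c_1}{\partial y} - g_0', \quad N_{yy} = \frac{\partial^2 c_1}{\partial y^2} - g_0'', \quad D_y = -\zeta', \quad D_{yy} = -\zeta'',
\end{equation*}
all evaluated at $(y_*,z_*)$. A quotient-rule computation gives $\partial_y F = (N_y D - N D_y)/D^2$; the vanishing of this quantity at $(y_*,z_*)$ is exactly the first-order condition \eqref{eq-1st-op}, equivalently $N_y = F_* D_y$ since $N/D = F_*$ there.

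Next I would differentiate once more. Writing $P := N_y D - N D_y$, one has $\partial_{yy}F = (P_y D - 2 P D_y)/D^3$ together with $P_y = N_{yy}D - N D_{yy}$; since $P(y_*,z_*) = 0$, the second-order term collapses to
\begin{equation*}
\frac{\partial^2 F}{\partial y^2}(y_*,z_*) = \frac{N_{yy}D - N D_{yy}}{D^2} = \frac{N_{yy} - F_* D_{yy}}{D},
\end{equation*}
where in the last step I used $N = F_* D$ at $(y_*,z_*)$. Because $\zeta$ is increasing and $z_* > y_*$, the denominator $D = \zeta(z_*)-\zeta(y_*) > 0$, so the inequality $\partial_{yy}F(y_*,z_*) \ge 0$ yields $N_{yy} - F_* D_{yy} \ge 0$. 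Substituting the expressions for $N_{yy}$ and $D_{yy}$ produces precisely $\frac{\partial^2 c_1}{\partial y^2}(y_*,z_*) - g_0''(y_*) + F_* \zeta''(y_*) \ge 0$, which is \eqref{ineq-h-fn}.

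The only genuine subtlety --- and the reason the hypothesis $y_* \in \I$ is stated --- is that the argument needs $y_*$ to be an interior minimizer, so that the \emph{two-sided} second-order necessary condition applies. At an attainable left boundary with $y_* = a$ only a one-sided first-order inequality (as in \eqref{eq-1st-op2}) is available and \eqref{ineq-h-fn} need not follow; the interiority assumption is exactly what removes this difficulty. Everything else is routine differentiation once the $C^2$-regularity of $c_1(\cdot,z_*)$, $g_0$, and $\zeta$ is in hand.
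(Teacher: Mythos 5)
Your proof is correct and follows exactly the paper's approach: the paper likewise invokes $\frac{\partial}{\partial y}F(y_*,z_*)=0$ and $\frac{\partial^2}{\partial y^2}F(y_*,z_*)\ge 0$ at the interior minimizer and then states that \eqref{ineq-h-fn} follows by ``straightforward but tedious calculations'' using \eqref{eq-F-optimal} and \eqref{eq-1st-op}. Your quotient-rule computation, with the collapse of the second derivative to $(N_{yy}-F_*D_{yy})/D$ via the vanishing of the first-order term and the positivity of $D=B\zeta(y_*,z_*)$, is precisely the calculation the paper omits.
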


\begin{proof}
Since the function $F$ achieves its minimum value at $(y_{*},z_{*})$ by Proposition \ref{F-optimizers} and $y_{*} \in (a, b)$ by assumption, we have $\frac{\partial}{\partial y} F(y_{*},z_{*})= 0 $ and $\frac{\partial ^{2}}{\partial y^{2}} F(y_{*},z_{*}) \ge 0$.  The inequality leads to \eqref{ineq-h-fn} by straightforward but tedious calculations utilizing both \eqref{eq-F-optimal} and  \eqref{eq-1st-op}.
\end{proof}

\subsection{Optimality in a large class of policies} \label{subsect:gen-soln}
We now identify a large class of admissible ordering policies in which an optimal $(s,S)$ policy remains optimal.

Define for all $x \in \I$
\begin{equation} \label{eq-G-fn}
G(x) = \begin{cases}
 c_1(x,z_*) + g_0 (z_*) - F_* \zeta (z_*),     & \text{ if } x \le y_*, \\
 g_0 (x) - F_* \zeta (x),    & \text{ if  } x >   y_*. 
\end{cases}
\end{equation}

\begin{rem}\label{rem-about-G}
We make several observations concerning the function $G$ here.

When $b$ is an entrance boundary, in view of \eqref{eq-B-psi=N(b)},  $G$ is   bounded below on $[y_{*}, b)$.
 When $x \ge  y_{*}$, thanks to the definitions of $\zeta $ and $g_{0}$ in \eqref{eq-psi-fn} and \eqref{eq-g0-fn}, respectively, we have
$$ \begin{aligned}
G'(x) = g_{0}'(x) - F_{*}\zeta'(x) = 2 s(x)  \int_{x}^{b} [c_{0}(v) - F_{*}] m(v)\d v.
\end{aligned}$$ Recall from Condition \ref{cost-cnds} (a) that $c_{0}$ is inf-compact. Thus when $b$ is a natural boundary, there exists some $z_{F_*}$ with $y_{*} < z_{F_*} < b$ so that  $c_{0}(x) > F_{*}$ for all $x \in [z_{F_*}, b)$. This shows that $G'(x) > 0$ for $x \ge z_{F_*}$; that is, $G$ is strictly increasing  on $[z_{F_*}, b)$ and therefore bounded below on $[y_{*}, b)$.

Now, from the monotonicity of $c_1$ in \eqref{eq-c1-monotone}, when $x_{1} < x_{2} < y_{*}$, we have that $c_{1}(x_{1}, z_{*}) \ge c_{1}(x_{2},z_{*})$ and hence $G(x_{1}) \ge G(x_{2})$.  Thus $G$ is decreasing on $(a, y_{*})$ and therefore $G$ is bounded below on  $(a, y_*]$ as well.

Therefore it follows that there exists some positive constant  $\widehat{\kappa}$ so that 
\begin{equation} \label{eq-G-bdd-above}
G(x) \ge -\widehat{\kappa}, \quad \text{ for all }x \in \I.
\end{equation}
\end{rem}

The next proposition requires a technical condition in the event that the optimizer $y_*$ is an interior point of ${\cal I}$.

\begin{cnd} \label{eq-AG-c0}
If $y_{*} > a$, then $c_1 \in C^1({\cal R})$, $c_1(\cdot,z_*) \in C^2({\cal I})$ and the function $A G + c_{0}$ is decreasing on $(a,y_{*})$.
\end{cnd}

\begin{prop}\label{prop-G-qvi} Assume Conditions \ref{diff-cnd}, \ref{cost-cnds} and \ref{eq-AG-c0} hold.
Then the function $G$ defined by  \eqref{eq-G-fn} is in $C^{1}(\I)\cap C^{2}(\I - \set{y_{*}})$ and satisfies the system of constraints:
\begin{equation}
\label{eq-G-qvis}
\begin{cases}
    AG(x) + c_0(x) - F_* \ge  0,  & \text{ for all  }\ \ x\in \I - \set{y_{*}}, \\
    BG(y,z)   \ge -c_1 (y,z),  &  \text{ for all  }\ \  (y,  z) \in \mathcal R, \\
    AG(x) + c_0(x) - F_* =  0,  &  \text{ for all  }\ \ x >   y_*,  \\
    BG(x,z_*) = -c_1  (x,z_*),  &  \text{ for all  }\ \ x \le  y_*.
    \end{cases}
\end{equation}
\end{prop}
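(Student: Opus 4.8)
The plan is to verify each clause of \eqref{eq-G-qvis} in turn, disposing of the regularity of $G$ and the two equalities first, since these follow directly from the construction of $G$ and the optimality of $(y_*,z_*)$. Away from $y_*$ the function $G$ is $C^2$: on $(y_*,b)$ it equals $g_0-F_*\zeta$, which is $C^2$ by the remark following \propref{cor-g0-psi-fns}, and on $(a,y_*)$ it equals $c_1(\cdot,z_*)$ plus a constant, which is $C^2$ under the hypotheses carried by \cndref{eq-AG-c0}. To get $G\in C^1(\I)$ I only need to match $G$ and $G'$ at $y_*$ when $y_*>a$ (if $y_*=a$ there is no interior kink and $G=g_0-F_*\zeta$ on all of $\I$). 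Continuity of $G$ at $y_*$ is exactly the relation $c_1(y_*,z_*)+Bg_0(y_*,z_*)=F_*B\zeta(y_*,z_*)$, i.e.\ the defining identity $F_*=F(y_*,z_*)$ from \eqref{eq-F-fn}, while continuity of $G'$ at $y_*$ is precisely the first-order condition \eqref{eq-1st-op}. The third line of \eqref{eq-G-qvis} (equality for $x>y_*$) is immediate from $Ag_0=-c_0$ and $A\zeta=-1$ in \eqref{A-g0=c0}--\eqref{A-psi=1}, and the fourth line (equality for $x\le y_*$) is a direct substitution into $BG(x,z_*)=G(z_*)-G(x)$ using the two branches of \eqref{eq-G-fn} and $z_*>y_*$.

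For the first line of \eqref{eq-G-qvis} the inequality is an equality on $(y_*,b)$ by the third line, so the only work is on $(a,y_*)$, and this is the main obstacle. There $G=c_1(\cdot,z_*)+\text{const}$, so $AG+c_0-F_*$ need not vanish; worse, since $G$ is only $C^1$ across $y_*$, the quantity $AG$ has a jump at $y_*$. The idea is to combine \cndref{eq-AG-c0}, which asserts that $AG+c_0$ is decreasing on $(a,y_*)$, with control of its one-sided limit at $y_*$. Because $AG+c_0$ is decreasing, $AG(x)+c_0(x)\ge\lim_{u\uparrow y_*}\big(AG(u)+c_0(u)\big)$ for every $x\in(a,y_*)$, so it suffices to show this left-hand limit is at least $F_*$. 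Computing the jump gives $AG(y_*^-)-AG(y_*^+)=\tfrac{\sigma^2(y_*)}{2}\big(\partial_{yy}c_1(y_*,z_*)-g_0''(y_*)+F_*\zeta''(y_*)\big)$, which is nonnegative by the second-order inequality \eqref{ineq-h-fn} of \propref{lem-h-fn} together with the nondegeneracy of $\sigma$. Since $AG(y_*^+)=F_*-c_0(y_*)$ by the third line, this yields $AG(y_*^-)+c_0(y_*)\ge F_*$, and the monotonicity then propagates the bound to all of $(a,y_*)$. It is exactly for this step that \propref{lem-h-fn} and \cndref{eq-AG-c0} are needed.

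The second line of \eqref{eq-G-qvis}, the impulse inequality $BG(y,z)\ge -c_1(y,z)$ on $\mathcal R$, I would reduce to the global optimality of $F_*$. Writing $\widehat G:=g_0-F_*\zeta$ (so $\widehat G=G$ on $(y_*,b)$), the bound $F(y,z)\ge F_*$ from \eqref{eq-F-optimal}, after clearing the positive denominator $B\zeta(y,z)$ in \eqref{eq-F-fn}, is exactly
\begin{equation*}
c_1(y,z)+\widehat G(z)-\widehat G(y)\ge 0,\qquad (y,z)\in\mathcal R,
\end{equation*}
with equality at $(y_*,z_*)$. This already gives the claim whenever $y>y_*$, since there $G=\widehat G$. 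For $y\le y_*<z$ I would invoke the additivity identity \eqref{eq-c1-equal-displacement}, which gives $c_1(y,z)-c_1(y,z_*)=c_1(y_*,z)-c_1(y_*,z_*)$; substituting the left branch $G(y)=c_1(y,z_*)+\widehat G(z_*)$ of \eqref{eq-G-fn} collapses $BG(y,z)+c_1(y,z)$ to $\big(c_1(y_*,z)+\widehat G(z)\big)-\big(c_1(y_*,z_*)+\widehat G(z_*)\big)$, which is nonnegative by the displayed inequality at $(y_*,z)$ combined with its equality at $(y_*,z_*)$. Finally, for $z\le y_*$ both points lie in the left branch and \eqref{eq-c1-equal-displacement} reduces $BG(y,z)+c_1(y,z)$ to $c_1(z,z)\ge k_1>0$ (equivalently one may use the triangle inequality \eqref{eq-c1-sub-add}).

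In summary, the genuinely delicate point is the interior inequality on $(a,y_*)$ in the first clause: the loss of $C^2$-regularity at $y_*$ forces one to track the sign of the jump in $AG$, and the second-order optimality information packaged in \propref{lem-h-fn} together with the monotonicity hypothesis \cndref{eq-AG-c0} are precisely what make that sign argument close. The impulse inequality, by contrast, is essentially a restatement of the optimality of $(y_*,z_*)$, once \eqref{eq-c1-equal-displacement} is used to bridge the two branches of $G$.
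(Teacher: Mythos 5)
Your proposal is correct and follows essentially the same route as the paper's proof: $C^1$-matching at $y_*$ via $F_*=F(y_*,z_*)$ and the first-order condition \eqref{eq-1st-op}, the case split over $\mathcal R$ for the impulse inequality (with the $y\le y_*<z$ case reduced via \eqref{eq-c1-equal-displacement} to the inequality at $(y_*,z)$ and the equality at $(y_*,z_*)$, exactly as in the paper's Case (iii)), and the one-sided limit at $y_*$ combined with \eqref{ineq-h-fn} and the monotonicity in \cndref{eq-AG-c0} for the interior inequality on $(a,y_*)$. No gaps; the argument matches the paper's in all essentials.
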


\begin{rem} \label{rem-qvi}
Considering the entire system \eqref{eq-G-qvis}, the function $G$ and the constant $F_*$ provide a solution to the QVI:
\begin{equation} \label{qvi}
 \min\left\{\,AG(x) + c_0(x) - F_*,\, \min_{z\in {\cal I}} [BG(x,z) + c_1(x,z)]\,\right\} = 0, \quad \forall x \in {\cal I}.
\end{equation}
\end{rem}

\begin{proof} 
For those models in which $y_* = a$, the first condition of \eqref{eq-G-qvis} is the same as the third condition and the last condition is trivially true.  When $y_* = a$, the proof below establishes the third condition of \eqref{eq-G-qvis} in \eqref{eq1b-G-equality} while the second condition of \eqref{eq-G-qvis} is verified by Case (i) of \eqref{eq1-G-qvi}.  We therefore prove \propref{prop-G-qvi} when $(y_*,z_*)$ is an interior point of ${\cal R}$, establishing the result in general.

To see that $G\in C^{1}(\I)\cap C^{2}(\I - \set{y_{*}})$, it is enough to examine that $G$ is $C^{1} $ at $y_{*}$. In fact, we have
\begin{displaymath}
\begin{aligned}
  G(y_*-)  & = c_1(y_{*},z_*) + g_0 (z_*) - F_* \zeta (z_*)  \\
    &  = g_0(z_*) - F_* \zeta (z_*)  + \( F_* [\zeta(z_*) -\zeta(y_*)] - [g_0(z_*)- g_0(y_*)]\) \\
    &= g_0(y_*) - F_* \zeta(y_*) = G(y_*+);
\end{aligned}
\end{displaymath}
and from \eqref{eq-1st-op}, \begin{displaymath}
G'(y_*+) = g_0'(y_*) - F_* \zeta'(y_*) = \mbox{$\frac{\partial c_{1}}{\partial y}$}(y_{*},z_{*}) =  G'(y_*-).
\end{displaymath}

 The rest of the proof is to show that each equality and inequality of \eqref{eq-G-qvis} is satisfied.  
 First we observe that by the definition of $G(x)$,
\begin{align}
\label{eq1a-G-equality}
& BG(x,z_*)  = -c_1(x, z_*), & \text{ for all } x \le y_*,
\\ \label{eq1b-G-equality} &  AG(x) + c_0(x) - F_*  =  0,  &  \text{ for all  }  x >   y_*.
\end{align}
Next we show that     \begin{equation}
\label{eq1-G-qvi}
BG(y,z) \ge -c_1(y,z) \text{ for all }  (y,  z) \in \mathcal R
\end{equation}
 in three cases.
\begin{description}
  \item[Case (i)] $ z > y \ge y_*$.  We have from the definition of $F_{*}$ that
  \begin{displaymath}
\begin{aligned}
 BG(y, z)  &  = [g_0(z) - g_0(y)] - F_* [\zeta(z) - \zeta(y)] \\
    & \ge [g_0(z) - g_0(y)] - \frac{c_1  (y,z) + g_0(z) - g_0(y) }{ \zeta(z) - \zeta(y) }[\zeta(z) - \zeta(y)] \\
    & = -c_1(y,z).
\end{aligned}
\end{displaymath}
  \item[Case (ii)] $y < z \le y_*$. \begin{displaymath}
\begin{aligned}
     BG(y,z)  & =   [c_1(z, z_{*}) + G(z_*)] -  [c_1 (y,z_{*}) + G(z_*)] \\  
		 &= c_1 (z, z_{*})  - c_1 (y, z_{*} ) \\
     & \ge -c_{1}(y,z),
    \end{aligned}
\end{displaymath} where the last inequality follows from \eqref{eq-c1-sub-add}.
  \item[Case (iii)] $y < y_* \le z $. Using the results established in the previous two cases, we have
  \begin{displaymath}
\begin{aligned}
   BG(y,z)  &     = (G(z) - G(y_*)) + (G(y_*) - G(y)) \\
     &  \ge  -c_1 (y_*,z) +  c_1(y_*,z_*) - c_1( y,z_{*}) \\
     & = -c_{1} (y,z),
\end{aligned}
\end{displaymath} 
in which the inequality above follows from Case (i) with $y= y_{*}$ for the first difference and, with $z=y_*$, from the second equality in Case (ii) for the second difference.  The last line is a consequence of \eqref{eq-c1-equal-displacement}. 
\end{description}

To finish the proof, it remains to show that
\begin{equation}
\label{eq2-G-qvi}
A G(x) + c_0(x) - F_*   \ge 0 ,\ \text{ for all }x \in \I - \{y_*\}.
\end{equation}
If $x > y_*$, we have \eqref{eq2-G-qvi} holds with equality by \eqref{eq1b-G-equality}.  We must therefore show  \eqref{eq2-G-qvi}  for $ x < y_*$.  To prove \eqref{eq2-G-qvi} for $x\in (a, y_{*})$, it is enough to show that $A G(y_*-) + c_{0}(y_*-) + F_{*} \ge 0$ since the function $AG + c_{0}$ is decreasing on $(a, y_{*})$ by \cndref{eq-AG-c0}. To this end, we use the facts that the functions  $\mu,\sigma$ and $c_{0}$ are continuous and that $G \in C^{1}(\I)$ to compute
\begin{align*}
 A  &G(y_*-) + c_{0}(y_*-) + F_{*}  \\  
    & = \mu (y_*-) G'(y_*-) + \mbox{$\frac{1}{2}$} \sigma^{2}(y_*-)G'' (y_*-) + c_{0}(y_*-)  + F_{*}  \\
    &  = \left[ \mu (y_{*}) G'(y_{*}) + \mbox{$\frac{1}{2}$} \sigma^{2}(y_{*})G'' (y_*+) + c_{0}(y_{*})  + F_{*} \right] + \mbox{$\frac{1}{2}$} \sigma^{2}(y_{*}) \left[  G'' (y_*-) - G'' (y_*+) \right] \\
    & = 0 + \mbox{$\frac{1}{2}$} \sigma^{2}(y_{*}) \left[\mbox{$\frac{\partial^{2} c_{1}}{\partial y^{2}}$}(y_{*},z_{*}) - g_{0}''(y_{*})  + F_{*} \zeta''(y_{*})  \right]\\
    & \ge 0,
\end{align*}
where we used the equality in \eqref{eq2-G-qvi} for $x= y_{*}$ to obtain the third equality, while the last inequality follows from  \eqref{ineq-h-fn}. The proof is now complete.
\end{proof}

As a result of \propref{prop-G-qvi}, \remref{rem-qvi} observes that the function $G$ defined by \eqref{eq-G-fn} and the constant $F_*$ satisfy the QVI \eqref{qvi}.  However, care must be taken to utilize the system \eqref{eq-G-qvis} to establish optimality of the policy \eqref{sS-tau-def} in which $(y,z) = (y_*,z_*)$.  For an ordering policy $(\tau,Y)$, let $X$ denote the resulting inventory level process and define the process $\widehat{M}$ by
\begin{equation} \label{eq-M-process}
\widehat{M}(t):= \int_{0}^{t} G'(X(s))\sigma (X(s)) \d W(s), \qquad t \geq 0.
\end{equation}
At this level of generality, it is not immediate that $\widehat{M}$ is a martingale nor that $\E[G(X(t))]/t$ vanishes as $t\rightarrow \infty$.  Thus we prove optimality in a subclass ${\cal A}_1$, which we now define.  Recall by \eqref{eq-G-bdd-above}, $G$ is bounded below which prevents the limit below (in $t$) from being negative.

\begin{defn} \label{A1-def}
Let ${\cal A}_1 \subset {\cal A}$ consist of those ordering policies $(\tau,Y)$ for which 
\begin{description}
\item[(i)] there exists an increasing sequence of stopping times $\{\beta_n: n \in \N\}$ with $\beta_n \rightarrow \infty\; (a.s.)$ such that $\{ \widehat{M}(t\wedge \beta_{n}): 0 \le t < \infty\}$ is a martingale for every $n\in \N$, 
\item[(ii)] the transversality condition 
\begin{equation}
\label{eq-transversality}
\liminf_{t\to \infty} \liminf_{n\to \infty}t^{-1} \E[G(X(t\wedge \beta_{n}))] = 0
\end{equation}
holds, and
\item[(iii)] when $a$ is a reflective boundary, the transversality condition
\begin{equation} \label{trans-local}
\lim_{t\rightarrow \infty} t^{-1} \E[L_a(t)] = 0
\end{equation} in which $L_a$ denotes the local time process of $X$ at $\{a\}$.
\end{description}
\end{defn}

\begin{rem}\label{rem-M-mg-transversality}
If the process $\{\widehat{M}(t): 0 \le t < \infty\}$ defined in \eqref{eq-M-process} is already a martingale, then for each $n$, $\beta_n$ may be taken to be $\infty$ and the transversality condition \eqref{eq-transversality} reduces to
\begin{equation} \label{eq-transversality2}
\liminf_{t\rightarrow \infty} t^{-1} \E[G(X(t))] = 0.
\end{equation}
\end{rem}

We begin by showing that the class of $(s,S)$ policies is in ${\cal A}_1$ so it is non-empty.  Recall that the set of $(y,b)$ policies when $b$ is an entrance boundary incur infinite cost so we exclude $z=b$ in the following analysis.

\begin{prop} \label{prop:sS-in-A1}
Let $X(0-) = x_0$.  Fix $(y,z) \in {\cal R}$ with $z < b$ and define the ordering policy $(\tau,Y)$  by \eqref{sS-tau-def}.  Then $(\tau,Y) \in {\cal A}_1$.
\end{prop}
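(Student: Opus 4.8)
The plan is to verify in turn the three defining properties (i)--(iii) of ${\cal A}_1$ for the $(s,S)$ policy \eqref{sS-tau-def}; properties (i) and (iii) are routine, and property (ii) carries the real content.

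For (i) I would localize by exit times from compact sets. Pick $r_n\nearrow b$ with $r_n>\max\{z,x_0\}$ and set $\beta_n:=\inf\{t\ge0:X(t)\ge r_n\}$. Under \eqref{sS-tau-def} the controlled process is reset to $z<b$ at each order and never falls below $y$, so on $[0,\beta_n]$ its path lies in the compact set $[y,r_n]\subset\I$ (or $[a,r_n]\subset{\cal E}$ when $y=a$). Since $G\in C^1(\I)$, both $G'$ and $\sigma$ are bounded there, whence $\E\int_0^{t\wedge\beta_n}\big(G'(X(s))\sigma(X(s))\big)^2\d s<\infty$ and $\{\widehat M(t\wedge\beta_n)\}$ is a square-integrable martingale. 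That $\beta_n\to\infty$ $(a.s.)$ follows from $b$ being non-attracting: between consecutive orders $X$ evolves as $X_0$, which cannot reach $b$ in finite time, and there are only finitely many orders in $[0,t]$, so $\sup_{s\le t}X(s)<b$ $(a.s.)$.

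For (ii), apply It\^o's formula to $G(X)$ on $[0,t\wedge\beta_n]$. Each jump is from $y$ to $z$, so $G(X(\tau_k))-G(X(\tau_k-))=BG(y,z)$; taking expectations (the martingale term drops) gives
\begin{equation*}
\E[G(X(t\wedge\beta_n))]=G(x_0)+\E\Big[\int_0^{t\wedge\beta_n}AG(X(s))\d s\Big]+BG(y,z)\,\E[N(t\wedge\beta_n)],
\end{equation*}
where $N(t):=\#\{k:\tau_k\le t\}$. As $n\to\infty$, dominated convergence (with dominating function $\int_0^t(|F_*|+c_0(X(s))+C)\d s$, integrable by \propref{prop-sS-cost}, since $AG=F_*-c_0$ on $(y_*,b)$ and $AG$ is bounded on the compact $[y,y_*]$) controls the integral, and monotone convergence gives $N(t\wedge\beta_n)\nearrow N(t)$; hence the left-hand side converges to a limit $\Phi(t)$. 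Dividing by $t$ and letting $t\to\infty$, the delayed renewal--reward theorem (as in \propref{prop-sS-cost}, using $\E_z[\tau_y]=B\zeta(y,z)$) turns the two averages into $\E_z\big[\int_0^{\tau_y}AG(X_0(s))\d s\big]/B\zeta(y,z)$ and $BG(y,z)/B\zeta(y,z)$, where $\tau_y:=\inf\{t\ge0:X_0(t)=y\}$.

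The crux is the one-cycle identity $\E_z\big[\int_0^{\tau_y}AG(X_0(s))\d s\big]=-BG(y,z)$. I would prove it by a second localization: It\^o's formula on $[0,\tau_y\wedge\beta_n]$ gives $\E_z[\int_0^{\tau_y\wedge\beta_n}AG\,\d s]=\E_z[G(X_0(\tau_y\wedge\beta_n))]-G(z)$, and, splitting on $\{\tau_y\le\beta_n\}$ and its complement,
\begin{equation*}
\E_z[G(X_0(\tau_y\wedge\beta_n))]=G(y)\,\P_z(\tau_y\le\beta_n)+G(r_n)\,\P_z(\beta_n<\tau_y).
\end{equation*}
The expected obstacle is showing the last term vanishes. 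Here $\P_z(\beta_n<\tau_y)=[S(z)-S(y)]/[S(r_n)-S(y)]$, and since $b$ is non-attracting, $S[y,b)=\infty$, so this probability is of order $1/S(r_n)$; combined with $\lim_{x\to b}g_0(x)/S(x)=\lim_{x\to b}\zeta(x)/S(x)=0$ (by L'H\^opital, using $\int^b c_0\,\d M<\infty$ and $M[x,b)\to0$), this forces $G(r_n)\,\P_z(\beta_n<\tau_y)\to0$. Thus $\E_z[G(X_0(\tau_y\wedge\beta_n))]\to G(y)$, giving the cycle identity. Substituting back, the two long-run averages cancel, so $\liminf_{t\to\infty}\liminf_{n\to\infty}t^{-1}\E[G(X(t\wedge\beta_n))]=\lim_{t\to\infty}t^{-1}\Phi(t)=0$, consistent with the lower bound $G\ge-\widehat\kappa$ from \eqref{eq-G-bdd-above}. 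Finally, (iii) is immediate: under \eqref{sS-tau-def} the level is reset to $z$ the instant it reaches $y$, so the process accumulates no local time at a reflective $a$, i.e.\ $L_a\equiv0$ and \eqref{trans-local} holds trivially.
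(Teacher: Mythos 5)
Your argument reaches the right conclusion, and parts (i) and (iii) match the paper's treatment exactly (same localization at exit times $\beta_n$ from $[y,b_n]$, same observation that an order at $y$ pre-empts any reflection at $a$). For the transversality condition (ii), however, you take a genuinely different route. The paper never applies It\^{o}'s formula to $G$ itself: it sandwiches $-\widehat{\kappa} \leq G \leq \widetilde{\kappa} + g_0 - F_*\zeta$ on $[y,b)$ and applies It\^{o} only to the globally $C^2$ function $g_0 - F_*\zeta$, for which $A(g_0-F_*\zeta) = -c_0 + F_*$ everywhere; the limit is then read off from the renewal facts already established in \propref{sS-stationary} and \eqref{lta-running-cost}, and the cancellation is a one-line algebraic identity. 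You instead apply It\^{o} to $G$ directly and must then supply a new one-cycle identity $\E_z[\int_0^{\tau_y}AG(X_0(s))\d s] = -BG(y,z)$, which you prove by optional stopping together with the escape estimate $G(r_n)\P_z(\beta_n<\tau_y)\to 0$. That estimate is correct and nicely self-contained ($\P_z(\beta_n<\tau_y)=S[y,z]/S[y,r_n]$ with $S[y,b)=\infty$, and $g_0/S,\ \zeta/S \to 0$ at $b$ by L'H\^{o}pital using \eqref{c0-M-integrable} and $M[x,b)\to 0$), but it is extra machinery that the paper's sandwich avoids.

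The one point you should address: when $y<y_*$ the controlled process crosses $y_*$, and $G$ of \eqref{eq-G-fn} is only guaranteed to be $C^1$ there (and only $C^2$ away from $y_*$) under the additional smoothness hypotheses $c_1\in C^1({\cal R})$, $c_1(\cdot,z_*)\in C^2({\cal I})$ of \cndref{eq-AG-c0}; the proposition itself assumes only \cndref{cost-cnds}, under which $G$ may have a kink at $y_*$ and a naive application of It\^{o}'s formula to $G$ is not justified (a local-time term at $y_*$ would appear). You should either restrict to the smooth case, invoke the Meyer--It\^{o} formula for $C^1$ functions with locally bounded second derivative, or --- most simply --- replace $G$ by the $C^2$ majorant $\widetilde{\kappa}+g_0-F_*\zeta$ as the paper does, which also makes your "dominating function'' step and the boundedness of $AG$ on $[y,y_*]$ automatic. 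With that repair your proof is complete.
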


\begin{proof}
Let $(\tau,Y)$ be as in the statement of the proposition  and let $X$ denote the resulting inventory process satisfying \eqref{controlled-dyn}.  Note that in the case that the boundary point $a$ is reflective, regardless of the value of $y$, an order will occur before any reflection should the process ever be at $a$.

Let $\{b_{n}: n\in \N\}$ be a strictly increasing sequence with $x_0 \vee z < b_n < b$ for each $n$ and $\lim_{n\to\infty} b_{n} = b$.  Define $\hat \beta_{n}: =\inf\{t\ge 0: X(t) = b_{n}\}$. Then $\hat\beta_{n}$ is an increasing sequence of stopping times and since $b$ is a non-attracting point, we have $\lim_{n\to\infty} \hat\beta_{n} =\infty\; (a.s.)$. Moreover, since $y \leq X(t\wedge \hat{\beta}_n) \leq b_n \;(a.s.)$ for all $t \ge 0$, the process $\widehat{M}(\cdot\wedge\hat\beta_{n})$ is a martingale.  

Recall from \eqref{eq-G-bdd-above} that there exists some $\widehat{\kappa}$ such that $G \geq -\widehat{\kappa}$. We now identify a function which bounds $G$ above on the interval $[y,b)$.  When $y \geq y_*$, then $G(y) = g_0(y) - F_* \zeta(y)$.  When $y < y_*$, the continuity of $G$ and $g_0 - F_* \zeta$ implies that these functions are bounded on $[y,y_*]$ and hence there exists some $\widetilde{\kappa} > 0$ such that $G \leq \widetilde{\kappa} + g_0 - F_* \zeta$ on $[y,y_*]$.  As a result,
\begin{equation} \label{G-bounds}
-\widehat{\kappa} \leq G(X(t)) \leq \widetilde{\kappa} + g_0(X(t)) - F_* \zeta(X(t)) \quad \forall t \geq 0
\end{equation}
and the transversality condition \eqref{eq-transversality} follows provided 
$$\lim_{t\rightarrow \infty} \lim_{n\rightarrow \infty} t^{-1} \E[g_0(X(t\wedge \hat\beta_n)) - F_* \zeta(X(t\wedge \hat\beta_n))] = 0.$$  

By It\^{o}'s formula using the function $g_0 - F_* \zeta$, we have
\begin{eqnarray*}
[g_0 - F_* \zeta](X(t\wedge\hat\beta_n)) &=& [g_0 -F_*\zeta](x_0) + \int_0^{t\wedge\hat\beta_n} A[g_0-F_*\zeta](s)\, \d s \\
& & + \sum_{k=1}^\infty I_{\{\tau_k \leq t\wedge\hat\beta_n\}} B[g_0-F_*\zeta](X(\tau_k-),X(\tau_k)) \\
& & + \int_0^{t\wedge \hat\beta_n} \sigma(X(s)) [g_0-F_*\zeta]'(X(s))\, \d W(s).
\end{eqnarray*}
Observe that the integrand in the stochastic integral is bounded over the range of integration, $X(\tau_k-) = y$ and $X(\tau_k) = z$, and that $Ag_0 = - c_0$ and $A\zeta = -1$.  Thus by the optional sampling theorem, taking expectations results in
\begin{align*}
t^{-1} \E\left[[g_0 - F_* \zeta](X(t\wedge\hat\beta_n))\right] &=  \frac{[g_0-F_*\zeta](x_0)}{t} - t^{-1} \E\left[\int_0^{t\wedge \hat\beta_n} c_0(X(s))\, \d s\right] \\
&  \quad\ + \frac{F_* \E[t\wedge\hat\beta_n]}{t} + \frac{B[g_0-F_*\zeta](y,z)}{t} \E\left[\sum_{k=1}^\infty I_{\{\tau_k \leq t\wedge\hat\beta_n\}}\right].
\end{align*}
By an application of the monotone convergence theorem, we have 
\begin{align} \label{g0-Fzeta-id} \nonumber
  \lim_{n\rightarrow \infty}  \frac{1}{t } \E\left[[g_0 - F_* \zeta](X(t\wedge\hat\beta_n))\right]    &= \frac{[g_0-F_*\zeta](x_0)}{t} - \frac{1}{t } \E\left[\int_0^{t} c_0(X(s))\, \d s\right]  +   F_* \\ & \quad \
  + \frac{B[g_0-F_*\zeta](y,z)}{t} \E\left[\sum_{k=1}^\infty I_{\{\tau_k \leq t\}}\right] 
\end{align}
since $\hat\beta_n \rightarrow \infty\; (a.s.)$. Now letting $t\rightarrow \infty$, \eqref{lta-running-cost} yields
$$t^{-1}\E\left[\int_0^t c_0(X(s))\, \d s\right] = \frac{Bg_0(y,z)}{B\zeta(y,z)}$$
and by Proposition \ref{sS-stationary}, the long-term expected frequency of orders is
$$\lim_{t\rightarrow \infty} t^{-1} \E\left[\sum_{k=1}^\infty I_{\{\tau_k \leq t\}}\right] = \frac{1}{B\zeta(y,z)}.$$
Therefore the right-hand side of \eqref{g0-Fzeta-id} converges to $0$ as $t\rightarrow \infty$.  Using the bounds \eqref{G-bounds}, we have
$$0 = \lim_{t\rightarrow \infty} \frac{-\kappa}{t} \leq \lim_{t\rightarrow \infty} \lim_{n\rightarrow \infty} \frac{\E[G(X(t\wedge\hat\beta_n))]}{t} \leq \lim_{t\rightarrow \infty}\lim_{n\rightarrow \infty} \frac{\wdt \kappa+ [g_0-F_*\zeta](X(t\wedge\hat\beta_n))}{t} = 0,$$
establishing \eqref{eq-transversality}.
\end{proof}

The following proposition identifies additional policies in ${\cal A}_1$.  The hypotheses of this proposition can be verified for many policies in certain models.

\begin{prop} \label{add-pol}
Let $(\tau,Y) \in {\cal A}$ with $J(\tau,Y) < \infty$ and let $X$ denote the resulting inventory process.  Suppose that $\widehat{M}$ defined by \eqref{eq-M-process} is a martingale, and there exist constants $C_1, C_2 > 0$ such that $\E[G(X(t))] \leq C_1 \E[c_0(X(t))] + C_2$ for all $t \geq 0$.  Then $(\tau,Y) \in {\cal A}_1$.
\end{prop}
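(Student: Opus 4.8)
The plan is to verify the three defining properties of ${\cal A}_1$ in \defref{A1-def} one at a time, drawing on the two hypotheses (that $\widehat{M}$ is a martingale and that $\E[G(X(t))] \le C_1\E[c_0(X(t))] + C_2$) together with the finiteness of $J(\tau,Y)$ and the lower bound \eqref{eq-G-bdd-above}. Since $\widehat{M}$ is already a martingale, property (i) is immediate: taking $\beta_n = n$ makes $\widehat{M}(\cdot\wedge\beta_n)$ a stopped martingale with $\beta_n\to\infty$, and by \remref{rem-M-mg-transversality} the transversality requirement (ii) then collapses to the single statement $\liminf_{t\to\infty}t^{-1}\E[G(X(t))] = 0$.

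For this I would argue by a two-sided squeeze. The lower bound is free: by \eqref{eq-G-bdd-above}, $G \ge -\widehat{\kappa}$, so $t^{-1}\E[G(X(t))] \ge -\widehat{\kappa}/t$ and hence $\liminf_{t\to\infty}t^{-1}\E[G(X(t))] \ge 0$. For the matching upper bound I would first note that, since $c_0,c_1 \ge 0$ and $J(\tau,Y)<\infty$, discarding the ordering term in \eqref{obj-fn} gives $\limsup_{t\to\infty}t^{-1}\E[\int_0^t c_0(X(s))\,\d s] \le J(\tau,Y) < \infty$. The key elementary lemma is then that for $g(t):=\E[c_0(X(t))]\ge 0$ with $\limsup_{t\to\infty}t^{-1}\int_0^t g(s)\,\d s < \infty$ one has $\liminf_{t\to\infty}t^{-1}g(t)=0$: were this $\liminf$ equal to some $\delta>0$, then $g(s)\ge \tfrac{\delta}{2}s$ for all large $s$, forcing $t^{-1}\int_0^t g(s)\,\d s \to \infty$, a contradiction. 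Dividing the growth hypothesis by $t$ and taking $\liminf$ then yields $\liminf_{t\to\infty}t^{-1}\E[G(X(t))] \le C_1\liminf_{t\to\infty}t^{-1}\E[c_0(X(t))] + 0 = 0$, since $C_2/t\to 0$. Combining the two bounds proves (ii).

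Property (iii) is in force only when $a$ is a reflective boundary, and this is where I expect the genuine difficulty to lie. In the models to which this proposition is applied (the drifted and geometric Brownian motion examples) $a$ is a natural boundary, so (iii) is vacuous and the argument above finishes the proof. To treat a genuinely reflective $a$, the natural device is the It\^o--Tanaka formula for $G\in C^1(\I)\cap C^2(\I-\{y_*\})$: since $L_a$ increases only on $\{X=a\}$ and $\widehat{M}$ is a martingale, taking expectations gives
\[
\E[G(X(t))] = G(x_0) + \E\Big[\int_0^t AG(X(s))\,\d s\Big] + G'(a)\,\E[L_a(t)] + \E\Big[\sum_{\tau_k\le t} BG(X(\tau_k-),X(\tau_k))\Big],
\]
which one would solve for $\E[L_a(t)]$ using $G'(a)\le 0$ (valid because $G$ is decreasing on $(a,y_*)$ by \remref{rem-about-G}), the QVI relations of \propref{prop-G-qvi}, the growth bound, and $J(\tau,Y)<\infty$. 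The obstacle is that the terms $\E[\int_0^t AG\,\d s]$ and the jump sum are generically of order $t$, so these hypotheses by themselves pin down $\E[L_a(t)]$ only to $O(t)$; upgrading this to the sharp $o(t)$ transversality requires exploiting $\lim_{x\to a}c_0(x)=\infty$ from \eqref{c0-lim-at-a} (so that finite long-run cost forces asymptotically negligible time near $a$) through an occupation-time estimate linking $L_a(t)$ to the occupation density of $X$ on a shrinking neighborhood of $a$. That refinement is the crux of the reflective case.
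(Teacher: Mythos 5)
Your verification of (i) and (ii) is, in substance, exactly the paper's proof: the paper likewise invokes \remref{rem-M-mg-transversality} to reduce \eqref{eq-transversality} to \eqref{eq-transversality2}, and then argues by contradiction that if $\liminf_{t\to\infty}t^{-1}\E[G(X(t))]\geq 2\delta>0$ then $\E[G(X(s))]\geq\delta s$ for large $s$, so the growth hypothesis forces $\E[c_0(X(s))]\geq \frac{1}{C_1}(\delta s - C_2)$ and hence $J(\tau,Y)=\infty$; your detour through the elementary lemma $\liminf_{t\to\infty}t^{-1}\E[c_0(X(t))]=0$ is the same computation rearranged (and your one-sided passage to the liminf in the inequality $t^{-1}\E[G(X(t))]\le C_1 t^{-1}\E[c_0(X(t))]+C_2/t$ is valid). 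Where you differ is on condition (iii): the paper's proof never mentions the local-time transversality condition and simply concludes $(\tau,Y)\in{\cal A}_1$ after establishing (ii), so the reflective case is left exactly as open there as you honestly flag it to be; your observation that the stated hypotheses do not obviously control $t^{-1}\E[L_a(t)]$ is a fair criticism of the proposition as written, not a defect of your argument, and in the models where the proposition is actually applied the boundary $a$ is natural so (iii) is vacuous.
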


\begin{proof}
Let $(\tau,Y)$, $X$ and $C_1, C_2$ be as in the statement of the proposition.  Since $\widehat{M}$ is a martingale, \eqref{eq-transversality} reduces to \eqref{eq-transversality2}.  

Now assume that \eqref{eq-transversality2} fails so there exists some $\delta > 0$ such that 
$$\liminf_{t\rightarrow \infty} t^{-1} \E[G(X(t))] \geq 2\delta.$$  
Then there exists some $T < \infty$ such that $\E[G(X(t))] \geq \delta\cdot t$ for all $t \geq T$.  Using the relation between $G$ and $c_0$ in the hypothesis, it follows that 
\begin{eqnarray*}
\infty &>& J(\tau,Y) = \limsup_{t\rightarrow \infty} t^{-1} \E\left[\int_0^t c_0(X(s))\, \d s + \sum_{j=1}^\infty I_{\{\tau_j \leq t\}} c_1(X(\tau_j-),X(\tau_j))\right] \\
&\geq& \limsup_{t\rightarrow \infty} t^{-1} \int_T^t \left(\mbox{$\frac{1}{C_1} \E[G(X(s))] - \frac{C_2}{C_1}$}\right)\, \d s \\
&\geq& \limsup_{t\rightarrow \infty} t^{-1} \int_T^t \left(\mbox{$\frac{\delta}{C_1}\cdot s - \frac{C_2}{C_1}$}\right)\, \d s = \infty.
\end{eqnarray*} 
This contradiction implies that \eqref{eq-transversality2} holds and hence that $(\tau,Y) \in {\cal A}_1$.
\end{proof}

We now give a sufficient condition for $\widehat{M}$ to be a martingale.

\begin{lem} \label{lem:mg}
Suppose $(\tau,Y) \in {\cal A}$ has resulting process $X$ such that all post-order locations $\{X(\tau_j): j\in \N\}$ are uniformly bounded above.  Assume the model is such that 
\begin{description}
\item[(a)] $\frac{\partial c_1}{\partial y}(x,z_*) \sigma(x)$ is bounded on $(a,y_*)$; 
\item[(b)] there exists some $\overline{y} < b$ such that $|G'(x) \sigma(x)|$ is non-decreasing on $(\overline{y},b)$; and
\item[(c)] for each $(y,z) \in {\cal R}$, 
$$\int (G'(x) \sigma(x))^2 \pi(x)\, \d x < \infty,$$
in which $\pi$ denotes the stationary density of \eqref{eq-pi-defn} for the $(y,z)$-ordering policy.
\end{description} 
Then $\widehat{M}$ is a martingale.
\end{lem}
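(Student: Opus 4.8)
The plan is to upgrade the continuous local martingale $\widehat M$ to a true martingale by establishing the bracket bound $\E[\lan \widehat M \ran_T] < \infty$ for every $T$. Indeed, $\lan \widehat M\ran_t = \int_0^t \bigl(G'(X(s))\sigma(X(s))\bigr)^2\,\d s$, and a continuous local martingale started at $0$ whose quadratic variation has finite expectation on $[0,T]$ is an $L^2$-bounded martingale on $[0,T]$; as $T$ is arbitrary this gives the claim. So the whole problem reduces to showing, for each fixed $T$,
\[
\E\Bigl[\int_0^T \bigl(G'(X(s))\sigma(X(s))\bigr)^2\,\d s\Bigr] < \infty ,
\]
which by Tonelli equals $\int_0^T \E[\phi(X(s))]\,\d s$ with $\phi := (G'\sigma)^2$; it therefore suffices to bound $\E[\phi(X(s))]$ suitably over $s \in [0,T]$.

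Next I would split the state space at the uniform post-order bound. Let $\ell_0 < b$ be a common upper bound for $\{X(\tau_j)\}$, and enlarge $\overline y$ from hypothesis (b) if necessary so that $\overline y \ge \ell_0$, which preserves the monotonicity of $|G'\sigma|$ on $(\overline y,b)$. On the lower range $(a,y_*)$ one has $G' = \frac{\partial c_1}{\partial y}(\cdot,z_*)$, so $\phi = \bigl(\frac{\partial c_1}{\partial y}(\cdot,z_*)\,\sigma\bigr)^2$ is bounded there by hypothesis (a) (this range is vacuous if $y_*=a$); on the compact middle range $[y_*,\overline y]$, $\phi$ is continuous, hence bounded. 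Thus $\phi \le M$ on $(a,\overline y]$ for some constant $M$, and the only genuine difficulty is the tail contribution $\E[\phi(X(s))\,I_{\{X(s)>\overline y\}}]$.

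The \emph{key structural observation} is that orders only raise the inventory and all post-order levels lie at or below $\ell_0 \le \overline y$; hence no order can occur while $X$ is above $\overline y$, and on the event $\{X>\overline y\}$ the process coincides with the uncontrolled diffusion $X_0$. I would exploit this together with hypothesis (b): since $\phi$ is nonnegative and nondecreasing on $(\overline y,b)$, the function $g := \phi\, I_{(\overline y,b)}$ is nondecreasing on all of $\I$. The aim is then to compare the upper tail of the time-$s$ law of $X$ with the stationary density $\pi$ of a reference $(s,S)$ policy whose reorder-up level is $\ell_0$ (so that, like $X$, it is free diffusion above $\ell_0$, and $\pi \propto m$ on $[\ell_0,b)$ by \eqref{eq-pi-defn}). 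If one can show the expected tail occupation is controlled by the stationary measure, i.e.\ $\int_0^T \E[g(X(s))]\,\d s \le C_T \int_{\I} \phi(x)\pi(x)\,\d x$, then hypothesis (c) closes the estimate, and combining with the bounded middle part yields $\E[\lan\widehat M\ran_T]<\infty$.

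The hard part is precisely this last comparison. A naive pathwise coupling $X \preceq \widetilde X$ against the reference $(s,S)$ process fails, because the general policy may order up to $\ell_0$ at instants when the reference sits near its own reorder level, momentarily overshooting it. The correct route is excursion-theoretic: above $\overline y$ the path of $X$ decomposes into i.i.d.\ free-diffusion excursions issued from crossings of $\ell_0$, each contributing a finite expected amount of $\int \phi(X_0)\,I_{\{X_0>\overline y\}}\,\d s$ (finiteness being the integrability encoded in (c)), while the expected number of excursions reaching above the fixed level $\overline y$ in $[0,T]$ is almost surely finite and is bounded, through the local time of $X$ at $\ell_0$, by the corresponding rate for the reference $(s,S)$ process. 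I expect the rigorous control of the expected number and size of these large excursions, and its comparison to the $(s,S)$ stationary regime, to be the main technical obstacle.
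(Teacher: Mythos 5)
Your reduction of the problem to showing $\E[\langle \widehat{M}\rangle_T]<\infty$, and your treatment of the ranges $(a,y_*)$ (via hypothesis (a)) and $[y_*,\overline y]$ (via continuity of $G'\sigma$), coincide with the paper's argument. The gap is the tail term $\E[\int_0^T (G'(X(s))\sigma(X(s)))^2 I_{(\overline y,b)}(X(s))\,\d s]$: you explicitly leave the comparison with the stationary regime as ``the main technical obstacle,'' and the excursion-theoretic route you sketch (local time at $\ell_0$, counting large excursions) is never carried out, so the proof is incomplete as written.

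Moreover, you dismiss the pathwise coupling too quickly. The overshoot obstruction you describe arises only because you take the reference policy's order-up-to level equal to the post-order bound $\ell_0$. The paper instead chooses the reference $(y,z)$ policy with its \emph{reorder} level $y$ strictly greater than $\overline y \vee K \vee x_0$ (with $K$ the uniform post-order bound) and runs $\tilde X$ from its stationary law, which is supported on $[y,b)$. Then $X(0)\le K< y\le \tilde X(0)$; at each order time of $X$ the process jumps to a level at most $K< y\le \tilde X$; at each order time of $\tilde X$ the reference only increases; and between orders both processes solve the same SDE driven by the same Brownian motion, so they cannot cross. Hence $X(t)<\tilde X(t)$ for all $t$, and since $|G'\sigma|$ is non-decreasing on $(\overline y,b)$, the tail contribution is dominated by $\E\bigl[\int_0^T (G'(\tilde X(s))\sigma(\tilde X(s)))^2 I_{(\overline y,b)}(\tilde X(s))\,\d s\bigr] \le T\int (G'(x)\sigma(x))^2\pi(x)\,\d x<\infty$ by stationarity and hypothesis (c). No excursion theory or local-time control is needed; supplying this choice of reference level closes your argument.
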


\begin{proof}
Let $(\tau,Y)$, $X$ and $\overline{y}$ be as in the statement of the lemma and let $K$ denote the upper bound on $X(\tau_j)$ for every $j\in N$.  Pick $(y,z) \in {\cal R}$ such that $y > \overline{y} \vee K\vee x_0$ and let $\tilde{X}$ be the stationary process using the $(y,z)$-ordering policy \eqref{eq-pi-defn} having the stationary distribution for its initial distribution.  First observe that $X(0-) = x_0 \leq X(0) \leq K < y \leq \tilde{X}(0)$.  Next note that $X(t) < \tilde{X}(t)$ for all $t\geq 0$ since at the order times $\{\tau_k\}$, $X(\tau_k) \leq K < y \leq \tilde{X}(\tau_k)$ and it is not possible for $X$ to move above $\tilde{X}$ through diffusion.  We also note that at the order times $\{\theta_j: j \in \N\}$ of $\tilde{X}$, $X(\theta_j-) \leq X(\theta_j) < y = \tilde{X}(\theta_j-) < \tilde{X}(\theta_j) = z$.  

Since $G \in C^1({\cal I})$ and $\sigma$ is continuous, $|G'(x) \sigma(x)|\leq L_1$ on $[y_*,\overline{y}]$ for some $L_1 < \infty$.  By (a), $G'(x) \sigma(x) < L_2$ on $(a,y_*)$ for some $L_2 < \infty$.  Therefore for each $t > 0$
\begin{eqnarray*}
\lefteqn{\E\left[\int_0^t \left(G'(X(s)) \sigma(X(s))\right)^2\left(I_{(a,y_*)}(X(s)) + I_{(y_*,\overline{y})}(X(s)) + I_{(\overline{y},b)}(X(s))\right) \d s\right]} \\
&\qquad \leq& L_1^2 t + L_2^2 t + \E\left[\int_0^t (G'(X(s))\sigma(X(s)))^2 I_{(\overline{y},b)}(X(s))\, \d s\right] \\
&\qquad \leq& L_1^2 t + L_2^2 t + \E\left[\int_0^t (G'(\tilde{X}(s))\sigma(\tilde{X}(s)))^2 I_{(\overline{y},b)}(\tilde{X}(s))\, \d s\right] \\
&\qquad =& \left(L_1^2  + L_2^2  + \int (G'(x) \sigma(x))^2 \, \pi(x)\, \d x\right) t < \infty.
\end{eqnarray*}
As a result, $G'(X(\cdot)) \sigma(X(\cdot))$ is square-integrable on $[0,t]$ for every $t < \infty$ and hence $\widehat{M}$ is a martingale.
\end{proof}

We now prove optimality of the $(y_*,z_*)$ ordering policy in the class ${\cal A}_1$.

\begin{thm}\label{thm-verification} 
Assume Conditions \ref{diff-cnd}, \ref{cost-cnds}, and \ref{eq-AG-c0} hold.  Let $F_*$ be defined by \eqref{eq-F-optimal}, and $(y_*,z_*)$ be a minimizing pair given by \propref{F-optimizers}. Then for each $(\tau, Y) \in \mathcal A_1$, 
$$F_*  \le J(\tau, Y).$$
Moreover, the impulse policy defined by \eqref{sS-tau-def} with $(y,z) = (y_*,z_*)$ is an optimal impulse policy in the class ${\cal A}_1$; that is, $J(\tau^{*}, Y^{*})= F_{*}$.
\end{thm}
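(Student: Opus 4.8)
The plan is to prove the two assertions in turn, treating $G$ and $F_*$ as a verification pair for the QVI \eqref{qvi}. The attainment statement is the easy half: by \propref{prop:sS-in-A1} the policy \eqref{sS-tau-def} with $(y,z)=(y_*,z_*)$ lies in ${\cal A}_1$, and by \propref{prop-sS-cost} together with \eqref{eq-F-fn} and \eqref{eq-F-optimal} its cost equals $F(y_*,z_*)=F_*$. Once the lower bound $F_*\le J(\tau,Y)$ is known for every $(\tau,Y)\in{\cal A}_1$, this equality shows the policy is optimal within ${\cal A}_1$, i.e.\ $J(\tau^*,Y^*)=F_*$. Thus the substance of the argument is the lower bound.

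For the lower bound I would fix $(\tau,Y)\in{\cal A}_1$ with resulting process $X$ and localizing sequence $\{\beta_n\}$ from \defref{A1-def}(i). Since $G\in C^{1}(\I)\cap C^{2}(\I-\{y_*\})$ by \propref{prop-G-qvi} and $X$ spends zero Lebesgue time at the single point $y_*$, a generalized It\^o formula gives
\[ G(X(t\wedge\beta_n)) = G(x_0) + \int_0^{t\wedge\beta_n}\!AG(X(s))\,\d s + \widehat M(t\wedge\beta_n) + \!\!\sum_{k:\,\tau_k\le t\wedge\beta_n}\!\!BG(X(\tau_k-),X(\tau_k)) + G'(a)L_a(t\wedge\beta_n), \]
where the last term is present only when $a$ is reflective. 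Invoking the QVI inequalities of \eqref{eq-G-qvis}, namely $AG(x)\ge F_*-c_0(x)$ on $\I-\{y_*\}$ and $BG(y,z)\ge -c_1(y,z)$ on ${\cal R}$, taking expectations, and using that $\widehat M(\cdot\wedge\beta_n)$ is a martingale by \defref{A1-def}(i) (so its expectation vanishes), I obtain after rearranging
\[ \E\Big[\int_0^{t\wedge\beta_n}\!\!c_0(X(s))\,\d s + \!\!\sum_{k:\,\tau_k\le t\wedge\beta_n}\!\!c_1(X(\tau_k-),X(\tau_k))\Big] \ge G(x_0) + F_*\,\E[t\wedge\beta_n] - \E[G(X(t\wedge\beta_n))] + G'(a)\E[L_a(t\wedge\beta_n)]. \]

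The delicate step is the passage to the limit, carried out in the order $n\to\infty$ then $t\to\infty$. For fixed $t$, monotone convergence lets me replace $t\wedge\beta_n$ by $t$ on the nonnegative cost side, so the left-hand side is bounded below by $\E[\int_0^t c_0\,\d s + \sum_{\tau_k\le t}c_1]$ for every $n$. Dividing by $t$, using $\E[t\wedge\beta_n]\to t$, and taking $\limsup_{n\to\infty}$ of the right-hand side (which converts $-\E[G(X(t\wedge\beta_n))]$ into $-\liminf_n$) yields
\[ \frac1t\E\Big[\int_0^t\!\!c_0(X(s))\,\d s + \!\!\sum_{\tau_k\le t}\!\!c_1\Big] \ge \frac{G(x_0)}t + F_* - \liminf_{n\to\infty}\frac{\E[G(X(t\wedge\beta_n))]}t + \frac{G'(a)\E[L_a(t)]}t. \]
Taking $\limsup_{t\to\infty}$, the term $G(x_0)/t$ vanishes, the local-time term vanishes by \defref{A1-def}(iii), and since $\limsup_t\big(-\liminf_n t^{-1}\E[G(X(t\wedge\beta_n))]\big) = -\liminf_t\liminf_n t^{-1}\E[G(X(t\wedge\beta_n))] = 0$ by the transversality condition \eqref{eq-transversality}, the surviving right-hand side is exactly $F_*$. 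As the $\limsup_{t\to\infty}$ of the left-hand side is $J(\tau,Y)$, this gives $J(\tau,Y)\ge F_*$.

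I expect the main obstacle to be the careful bookkeeping of these iterated limits: the running and ordering costs must be pushed to the full horizon $t$ by monotone convergence before dividing, whereas the boundary terms involving $G$ and $L_a$ can only be controlled after normalizing by $t$ and respecting the precise order ($\liminf_n$ inside, then $\limsup_t$) demanded by \eqref{eq-transversality}. The lower bound \eqref{eq-G-bdd-above} on $G$ is what prevents these normalized quantities from diverging to $-\infty$ and makes \eqref{eq-transversality} substantive. A secondary technical point is justifying the generalized It\^o formula when $G$ is merely $C^{1}$ across $y_*$, and correctly producing the local-time contribution $G'(a)L_a$ at a reflective boundary $a$, for which \defref{A1-def}(iii) is precisely tailored.
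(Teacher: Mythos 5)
Your proposal is correct and follows essentially the same route as the paper's own proof: apply Dynkin's/It\^o's formula to $G$ along the localizing times $\beta_n$, invoke the QVI inequalities of \eqref{eq-G-qvis}, bound the stopped costs by the full-horizon costs using nonnegativity of $c_0$ and $c_1$, and then pass to the limit in the order $n\to\infty$ followed by $t\to\infty$, using \eqref{eq-transversality}, \eqref{trans-local} and the lower bound \eqref{eq-G-bdd-above} exactly as the paper does. The only cosmetic difference is that you make explicit the justification of the generalized It\^o formula across the $C^1$-junction at $y_*$, which the paper leaves implicit.
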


\begin{proof} The fact that $J(\tau^*,Y^*) = F_*$ follows from \propref{prop-sS-cost}.

Now let $(\tau,Y)\in {\mathcal A}_1$ be arbitrary.
 We shall prove that $F_*$ is a lower bound on $J(\tau,Y)$ when $a$ is a reflective boundary point.  The proof for the other types of boundaries does not include the integral with respect to the local time process $L_a$.  Let  $\beta_{n}$  be as in \defref{A1-def}. Then applying Dynkin's formula to the function $G$ of  \eqref{eq-G-fn} yields
 \begin{displaymath}
\begin{aligned}
\E_{x_0}& [G(X(t \wedge \beta_{n}))] \\ 
    & =  G(x)+ \E_{x_0}\biggl[ \int_0^{t\wedge \beta_{n}}  AG(X(s))  \d  s
   +  \sum_{k=1}^\infty I_{\{\tau_k \le  t\wedge \beta_{n}\}}   BG(X(\tau_k-),X(\tau_k)) \\
	  & \qquad \qquad \qquad \qquad + \int_0^{t\wedge \beta_n} G'(X(s))\, dL_a(s)\biggl]  \\
  & \ge G(x) + \E_{x_0}\biggl[ \int_0^{t\wedge \beta_{n}}   [F_*- c_0(X(s))]  \d  s
   -  \sum_{k=1}^\infty I_{\{\tau_k \le  t\wedge \beta_{n}\}}   c_1 (X(\tau_k-) ,X(\tau_k))\biggl] \\
	  & \qquad \qquad \qquad \qquad + G'(a)\, \E_{x_0}[L_a(t\wedge \beta_n)] ,
 \end{aligned}
\end{displaymath} where the inequality follows from Proposition \ref{prop-G-qvi}.  Rearranging the terms and dividing both sides by $t$,  it follows that
\begin{equation}\label{eq1-verification}
\begin{aligned}
\lefteqn{\mbox{$\frac{1}{t}$} \E_{x_0}[  F_*  ( t \wedge \beta_{n}  ) - G(X(t \wedge \beta_{n})) + G'(a) L_a(t\wedge \beta_n)] }  \\  
& \quad  \le -\mbox{$\frac{1}{t}$}  G(x )  + \mbox{$\frac{1}{t}$}  \E_{x_0} \biggl[  \int_0^{t\wedge \beta_{n}}    c_0(X(s))\d s  +  \sum_{k=1}^\infty I_{\{\tau_k  \leq  t\wedge \beta_{n}\}}   c_1 (X(\tau_k-) ,X(\tau_k )) \biggl] \\
  &\quad  \le -\mbox{$\frac{1}{t}$}  G(x )  + \mbox{$\frac{1}{t}$}  \E_{x_0} \biggl[  \int_0^{t}    c_0(X(s))\d s  +  \sum_{k=1}^\infty I_{\{\tau_k  \leq  t\}}   c_1 (X(\tau_k-) ,X(\tau_k )) \biggl] .
\end{aligned}
\end{equation} 
Note that the second inequality of \eqref{eq1-verification} follows because both $c_{0}$ and $c_{1}$ are nonnegative.

Using the monotone convergence theorem, we have both $\lim_{n\to \infty} \E_{x}[t\wedge \beta_{n}] =t$ and $\lim_{n\rightarrow \infty} \E_{x_0}[L_a(t\wedge \beta_n)] = \E_{x_0}[L_a(t)]$. Thus it follows that
\begin{displaymath}\begin{aligned}
\limsup_{n\rightarrow \infty} &\mbox{$\frac{1}{t}$} \E_{x_0}[F_{*}(t\wedge \beta_{n})  - G(X(t \wedge \beta_{n})) + G'(a) L_a(t\wedge \beta_n)] \\ 
& = F_{*} - \liminf_{n\to\infty} \mbox{$\frac{1}{t}$} \E_{x_0}[G(X(t \wedge \beta_{n}))] + \mbox{$\frac{G'(a)}{t}$} \E_{x_0}[L_a(t)] \\ 
& \le -\mbox{$\frac{1}{t}$} G(x)  + \mbox{$\frac{1}{t}$} \E_{x_0} \biggl[  \int_0^{t}    c_0(X(s))\d s  +  \sum_{k=1}^\infty I_{\{\tau_k  \leq  t\}}   c_1 (X(\tau_k-) ,X(\tau_k )) \biggl].
\end{aligned}\end{displaymath}
Then taking the limits superior as $t \rightarrow \infty$ and using \eqref{eq-transversality} and \eqref{trans-local}, we obtain
\begin{displaymath}\begin{aligned}
F_{*} & \le \limsup_{t\to\infty} \biggl(-\mbox{$\frac{1}{t}$} G(x )  + \mbox{$\frac{1}{t}$} \E_{x_0} \biggl[  \int_0^{t}    c_0(X(s))\d s  +  \sum_{k=1}^\infty I_{\{\tau_k  \leq  t\}}   c_1 (X(\tau_k-) ,X(\tau_k )) \biggl]\biggr)\\
& = J(\tau, Y).
\end{aligned}\end{displaymath}
This completes the proof.
\end{proof}

\section{$(s,S)$ Optimality in ${\cal A}$ for Certain Models} \label{sect:examples}
We now extend the above results for two inventory models.  The first uses a negatively drifted Brownian motion to model the demand.  It allows back-orders with piecewise linear holding and back-order cost rates and ordering costs consisting of fixed plus proportional costs.  The second model represents demands using a geometric Brownian motion so no shortage ever occurs.  The holding cost rate function is nonlinear and the ordering cost function has fixed and level-dependent costs.  The extensions in both cases rely on cost comparisons.

\subsection{Drifted Brownian motion inventory model}\label{sect-dbm}
We begin by examining the classical model that has been studied by \cite{sulem-86}, \cite{DaiY-13-average}, and many others.  We then examine a drifted Brownian motion process with reflection at $\{0\}$.  In \sectref{sect:refl-dBm} we introduce a class of non-Markovian policies and show  some of these policies have smaller cost than an optimal $(s,S)$ ordering policy.  This new policy is not in the class ${\cal A}_1$.  
Finally, we then interpret the reflection in \sectref{sect:just-in-time} as ``just-in-time'' ordering that orders so as to fulfill unmet demand when the inventory is depleted and we provide a simple necessary and sufficient condition under which the solely just-in-time policy incurs a smaller cost than the optimal $(s,S)$ policy.  Again, this policy is not in the class ${\cal A}_1$.

\subsubsection{Classical model}
In the absence of ordering, the inventory level process $X_0$ satisfies
$$\d X_0(t) = -\mu\d t + \sigma\d W(t), \quad X_0(0) = x_0\in \I:=(-\infty, \infty)$$
in which $\mu, \sigma > 0$ and $W$ is a standard Brownian motion process.  The generator is $Af(x) = \frac{\sigma^2}{2} f''(x) - \mu f'(x)$ acting on $f \in C^2(\R)$.  We have $s(x) = e^{2\mu x /\sigma^{2}}$ and   $m(x)= \sigma^{-2} e^{-2\mu x /\sigma^{2}}$.  Therefore the scale and speed measures are respectively given by
\begin{equation} \label{dbm-measures} 
S[l,x] = \frac{\sigma^{2}}{2\mu} \left(e^{2\mu x/\sigma^{2} } - e^{2\mu l/\sigma^{2} } \right) \quad \mbox{ and } \quad M[l,x] = \frac{1}{2\mu} \left(e^{-2\mu l/\sigma^{2} } - e^{-2\mu x/\sigma^{2} } \right)
\end{equation}   for any $[l,x]\subset \I$. It is easy to see that $-\infty$ is attracting and unattainable while $\infty$ is nonattracting and unattainable.  Thus both $-\infty$ and $\infty$ are natural. This verifies Condition \ref{diff-cnd}.

To specify the cost structure, observe the model includes both holding and back-order costs as well as ordering costs.  Define
$$c_0(x) = \left\{\begin{array}{rl}
-c_b\, x, & \quad x < 0, \\
 c_h\, x, & \quad x \geq 0,
\end{array}\right.$$
in which $c_b > 0$ denotes the back-order cost rate per unit of inventory per unit of time and similarly, $c_h > 0$ is the holding cost rate.  The ordering costs are taken to be comprised of both fixed and proportional costs.  Let $k_1 > 0$ denote the fixed cost and $k_2$ denote the cost per unit ordered; thus $c_1(y,z) = k_1 + k_2(z-y)$, which is defined for all $(y,z) \in \overline{\cal R}$. 

This model was first introduced in \cite{bather-66} and revisited in \cite{sulem-86}, where it is shown that an $(s,S)$-policy minimizes the long-run average cost.  

A straightforward computation using  \eqref{eq-psi-fn} (with $C=0$)  yields $\zeta(x) = \frac{1}{\mu}\, x$ and a similar calculation  using \eqref{eq-g0-fn} (again with $C=0$)  yields
\begin{equation} \label{dbm-g0-def}
g_0(x) = \begin{cases}
-\frac{c_b}{2\mu}\, x^2 - \frac{\sigma^2 c_b}{2\mu^2}\, x + \frac{\sigma^4(c_b+c_h)}{4\mu^3} \left(\exp\set{\frac{2\mu}{\sigma^2} x} - 1\right), & \quad x < 0, \\
\frac{c_h}{2\mu}\, x^2 + \frac{\sigma^2 c_h}{2\mu^2}\, x, & \quad x \geq 0.
\end{cases}
\end{equation} Note that $g_{0} \in C^{2}(\I)$.

The following lemma can be verified in a straightforward manner.
\begin{lem} Condition \ref{cost-cnds} is satisfied.
\end{lem}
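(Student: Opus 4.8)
The plan is to check each displayed requirement of \cndref{cost-cnds}(a) and (b) directly against the explicit forms $c_0(x) = c_b(-x)^+ + c_h x^+$ and $c_1(y,z) = k_1 + k_2(z-y)$. A useful preliminary observation is that since both $a=-\infty$ and $b=\infty$ are natural boundaries, $\mathcal{E} = (-\infty,\infty) = \mathcal{I}$, so neither endpoint lies in $\mathcal{E}$ and the limit requirement \eqref{c0-lim-at-a} is vacuously satisfied; as the discussion following \cndref{cost-cnds} notes, it would in any case follow from inf-compactness.

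For part (a), continuity of $c_0$ is immediate since the two linear branches agree at $0$, and inf-compactness I would establish by simply exhibiting the sublevel set: for $L>0$, $\{x\in\mathcal{I}: c_0(x)\le L\} = [-L/c_b,\, L/c_h]$, which is compact. The two integrability requirements carry the only real content, but both can be read off from the already-derived closed form for $g_0$ in \eqref{dbm-g0-def} rather than recomputed. Indeed, by the definition \eqref{eq-g0-fn} one has $g_0(x)=\int_C^x \int_u^b 2c_0(v)\, \d M(v)\, \d S(u)$, so finiteness of $g_0$ on $\mathcal{I}$ is precisely the statement that the inner integral $\int_y^b c_0(v)\, \d M(v)$ is finite for each $y$, which is \eqref{c0-M-integrable}; this holds because the speed density $m(v)=\sigma^{-2}e^{-2\mu v/\sigma^2}$ decays exponentially as $v\to\infty$ and dominates the linear growth of $c_0$. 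Likewise, the non-integrability condition \eqref{infinite-dbl-intgrl-at-b} is exactly $\lim_{x\nearrow b} g_0(x)=\infty$, and the $x\ge 0$ branch of \eqref{dbm-g0-def}, namely $\frac{c_h}{2\mu}x^2+\frac{\sigma^2 c_h}{2\mu^2}x$, evidently diverges to $+\infty$.

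For part (b), each requirement reduces to an elementary algebraic check for the affine cost $c_1(y,z)=k_1+k_2(z-y)$. Continuity is clear, and since $z\ge y$ on $\overline{\mathcal{R}}$ and $k_2>0$ we have $c_1(y,z)\ge k_1>0$; the monotonicity requirement \eqref{decr-cost} follows from $c_1(y,z)=k_1+k_2(z-y)\ge k_1 = c_1(z,z)$, and the displacement identity \eqref{eq-c1-equal-displacement} holds because both sides equal $2k_1+k_2(y+z-w-x)$. There is no genuine obstacle in this lemma: the only mildly substantive points are the two integrability conditions in (a), and invoking the explicit $g_0$ of \eqref{dbm-g0-def} renders them immediate, so the entire verification is routine.
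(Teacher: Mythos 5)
Your verification is correct and is exactly the routine check the paper has in mind: the paper offers no proof at all, stating only that the lemma ``can be verified in a straightforward manner.'' All of your steps are sound --- the vacuity of \eqref{c0-lim-at-a} for natural boundaries, the explicit compact sublevel set, the exponential decay of $m$ dominating the linear growth of $c_0$ for \eqref{c0-M-integrable}, the divergence of the explicit $g_0$ for \eqref{infinite-dbl-intgrl-at-b}, and the algebraic identities for the affine $c_1$.
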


\begin{lem}\label{lem-F-optimal-dbm}
There is a unique minimizing pair $(y_*,z_*)$ of $F$ with $y_* < z_*$,  where 
\begin{equation} \label{F-def-dbm}
F(y,z) = \frac{c_1(y,z) + Bg_0(y,z)}{B\zeta(y,z)}.
\end{equation}
\end{lem}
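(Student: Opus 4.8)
The plan is to reduce the two-dimensional optimization to finding the zeros of a single strictly monotone function of the common marginal ``slope'', so that existence comes from \propref{F-optimizers} and uniqueness from strict monotonicity. First I would record that existence is already in hand: Conditions \ref{diff-cnd} and \ref{cost-cnds} hold and both boundaries are natural, so \propref{F-optimizers} produces a minimizer, and the boundary analysis in its proof (cases (i)--(vi)) gives $F(y,z)\to\infty$ as $(y,z)$ approaches $\partial\mathcal R$; hence any minimizer is interior and satisfies the first-order conditions \eqref{eq-1st-op}. Since $\zeta(x)=x/\mu$ and $c_1(y,z)=k_1+k_2(z-y)$, it is convenient to set $\widetilde g(x):=g_0(x)+k_2x$, so that
\[
F(y,z)=\mu\,\frac{k_1+\widetilde g(z)-\widetilde g(y)}{z-y}.
\]
With $\frac{\partial c_1}{\partial y}=-k_2$, $\frac{\partial c_1}{\partial z}=k_2$ and $\zeta'\equiv 1/\mu$, the two equalities in \eqref{eq-1st-op} collapse to the single tangency condition
\[
\widetilde g'(y_*)=\widetilde g'(z_*)=\tfrac{F_*}{\mu}.
\]

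Next I would analyze the shape of $\widetilde g'$. Differentiating the explicit $g_0$ in \eqref{dbm-g0-def} gives $\widetilde g''(x)=c_h/\mu>0$ for $x\ge 0$ and $\widetilde g''(x)=\mu^{-1}[(c_b+c_h)e^{2\mu x/\sigma^2}-c_b]$ for $x<0$, so $\widetilde g''$ is negative on $(-\infty,x_m)$ and positive on $(x_m,\infty)$, where $x_m:=\frac{\sigma^2}{2\mu}\ln\frac{c_b}{c_b+c_h}<0$. Thus $\widetilde g'$ is strictly decreasing on $(-\infty,x_m)$, strictly increasing on $(x_m,\infty)$, and $\widetilde g'(x)\to\infty$ at both endpoints (the left from the $-(c_b/\mu)x$ term, the right from linear growth). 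Writing $p_{\min}:=\widetilde g'(x_m)$, the equation $\widetilde g'(x)=p$ then has exactly two roots for each $p>p_{\min}$: one root $\alpha(p)<x_m$ and one root $\beta(p)>x_m$, with $\alpha$ smooth and strictly decreasing and $\beta$ smooth and strictly increasing (the inverse function theorem applies since $\widetilde g''\neq 0$ off $x_m$). The tangency condition forces $y_*<x_m<z_*$ and identifies every interior critical pair of $F$ with a pair $(\alpha(p),\beta(p))$ for some $p>p_{\min}$.

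Finally I would set
\[
H(p):=k_1+\widetilde g(\beta(p))-\widetilde g(\alpha(p))-p\bigl(\beta(p)-\alpha(p)\bigr),\qquad p>p_{\min},
\]
so that, by construction, $(\alpha(p),\beta(p))$ is a critical point of $F$ exactly when $H(p)=0$ (this is the remaining chord-slope equation $F_*/\mu=p$). Differentiating and using $\widetilde g'(\alpha(p))=\widetilde g'(\beta(p))=p$, all terms involving $\alpha'$ and $\beta'$ cancel, leaving
\[
H'(p)=-\bigl(\beta(p)-\alpha(p)\bigr)<0,
\]
so $H$ is strictly decreasing and has at most one zero. Since the minimizer from \propref{F-optimizers} is an interior critical point, $H(F_*/\mu)=0$, so a zero exists and is unique; the corresponding pair is the unique critical point of $F$ in $\mathcal R$, hence the unique minimizer, with $y_*<x_m<z_*$. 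I expect the only delicate steps to be the sign analysis of $\widetilde g''$ that yields the U-shape of $\widetilde g'$, and the envelope cancellation giving $H'(p)=-(\beta-\alpha)$; invoking \propref{F-optimizers} to locate a zero of $H$ neatly avoids any asymptotic estimate of $H$ as $p\to\infty$.
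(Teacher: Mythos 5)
Your proposal is correct, and its skeleton coincides with the paper's: existence and interiority come from \propref{F-optimizers} (both boundaries being natural), the first-order conditions \eqref{eq-1st-op} collapse to the equal-slope condition $g_0'(y_*)=g_0'(z_*)=F_*/\mu-k_2$, and the U-shape of $g_0'$ (strictly decreasing on $(-\infty,\overline{x})$, strictly increasing on $(\overline{x},\infty)$ with $\overline{x}=\frac{\sigma^2}{2\mu}\ln\frac{c_b}{c_b+c_h}$) forces $y_*<\overline{x}<z_*$. Where you diverge is the final uniqueness step. The paper gets uniqueness almost for free: since the minimal value $F_*$ is unique, the common slope level $F_*/\mu-k_2$ is determined, and the U-shape then admits exactly one root on each side of $\overline{x}$. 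You instead parametrize \emph{all} interior critical points by the common slope $p$ via the branches $\alpha(p)<\overline{x}<\beta(p)$ and show the chord-slope equation $H(p)=0$ has at most one root because the envelope cancellation gives $H'(p)=-(\beta(p)-\alpha(p))<0$. Your route proves the strictly stronger fact that $F$ has a unique critical point in $\mathcal R$ (not merely a unique minimizer), at the cost of needing $\alpha,\beta$ differentiable (fine here, since $g_0\in C^2$ and $g_0''\neq 0$ off $\overline{x}$); the paper's route is shorter but yields only uniqueness of the minimizing pair, which is all the lemma asserts. Both arguments are sound.
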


\begin{proof}
Since both Conditions \ref{diff-cnd} and \ref{cost-cnds} are satisfied, and noting that both boundaries $-\infty$ and $\infty$ are  natural,  by Proposition \ref{F-optimizers}, there exists a pair $(y_*,z_*)$ with $-\infty < y_{*} < z_{*} < \infty$ such that $F_{* } = \inf\{F(y,z): y < z\} = F(y_{*}, z_{*})$. Moreover, the first order optimality condition   holds:
\begin{equation}\label{eq-F*yz*-dbm}
F_{*} = \mu (k_{2} + g_{0}'(y_{*})) =  \mu (k_{2} + g_{0}'(z_{*})).
\end{equation}
Hence we have $g_{0}'(y_{*}) = g_{0}'(z_{*})$.  Note that \begin{displaymath}
g_{0}'(x) =\begin{cases}
   -\frac{c_{b}}{\mu} x - \frac{\sigma^{2}c_{b}}{2\mu^{2}} + \frac{\sigma^{2}(c_{b}+c_{h})}{2\mu^{2}} e^{2\mu x /\sigma^{2}},   & \text{ if } x < 0, \\
  \frac{c_{h}}{\mu} x + \frac{\sigma^{2}c_{h}}{2\mu^{2}},     & \text{ if } x\ge 0.
\end{cases}
\end{displaymath}
Thus we can verify directly that $g_{0}'(x)$ is strictly decreasing on $(-\infty, \overline x)$ and strictly  increasing on $(\lbar x, \infty)$, where $\lbar x:= \frac{\sigma^{2}}{2\mu}\ln \frac{c_{b}}{c_{b} + c_{h}} < 0$ with  
$$g_0'(\lbar x) =  -  \mbox{$\frac{c_b \sigma^2}{2\mu^2} \ln \frac{c_b}{c_b + c_h}$} > 0. $$  
In particular, it follows that $y_{*} < \lbar x < z_{*}$ and that the pair $(y_{*} , z_{*})$ is unique.
\end{proof}

Observe that $y_* < 0$ so the $(y_*,z_*)$-ordering policy waits until a sufficient amount of inventory is on back-order before placing an order of size $z_*-y_*$.  The order-to level $z_*$ may be either positive or negative.

\begin{lem}\label{lem-G-qvi-dbm}
Let $(y_{*}, z_{*})$ be as in Lemma \ref{lem-F-optimal-dbm}.  With reference to \eqref{eq-G-fn}, define 
\begin{equation} \label{eq-G-dbm}
G(x) = \begin{cases}
 k_1 + k_2 (z_* - x)   
   + g_{0} (z_{*}) - \mbox{$\frac{F_{*}}{\mu}$} z_*, & \text{ if } x \leq y_*, \\
 -\frac{c_b}{2\mu}\, x^2 - \frac{\sigma^2 c_b}{2\mu^2}\, x + \frac{\sigma^4(c_b+c_h)}{4\mu^3} \left(\exp\set{\frac{2\mu}{\sigma^2} x} - 1\right) - \mbox{$\frac{F_{*}}{\mu}$} x,   & \text{ if  } y_* < x \leq 0, \\
 \frac{c_h}{2\mu}\, x^2 + \frac{\sigma^2 c_h}{2\mu^2}\, x - \mbox{$\frac{F_{*}}{\mu}$} x,   & \text{ if  } 0 < x. 
\end{cases} \end{equation} 
Then $G\in C^{1}(\I) \cap C^{2}(\I-\{ y_{*}\})$ and satisfies the system \eqref{eq-G-qvis}.
\end{lem}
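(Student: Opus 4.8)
The plan is to identify the piecewise formula \eqref{eq-G-dbm} with the general function \eqref{eq-G-fn} specialized to this model, and then to invoke \propref{prop-G-qvi} rather than verifying the system \eqref{eq-G-qvis} by hand. First I would substitute the model data $\zeta(x) = x/\mu$, the function $g_0$ of \eqref{dbm-g0-def}, and $c_1(x,z_*) = k_1 + k_2(z_*-x)$ into \eqref{eq-G-fn}. For $x \le y_*$ this produces $G(x) = k_1 + k_2(z_*-x) + g_0(z_*) - (F_*/\mu)z_*$, matching the top branch of \eqref{eq-G-dbm}; for $x > y_*$ it produces $G(x) = g_0(x) - (F_*/\mu)x$, and splitting this at the origin according to the two pieces of $g_0$ in \eqref{dbm-g0-def} reproduces the remaining two branches. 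Hence the explicit $G$ is exactly the general $G$, so the lemma reduces to checking the hypotheses of \propref{prop-G-qvi}.

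Two of those hypotheses, \cndref{diff-cnd} and \cndref{cost-cnds}, have already been established for this model by the preceding lemmas, so the only thing left is \cndref{eq-AG-c0}. Since $a = -\infty$ and $y_*$ is finite we are in the case $y_* > a$, for which \cndref{eq-AG-c0} asks that $c_1 \in C^1({\cal R})$, that $c_1(\cdot,z_*) \in C^2(\I)$, and that $AG + c_0$ be decreasing on $(-\infty, y_*)$. The affine form of $c_1$ makes the two smoothness requirements immediate, so the substantive point is the monotonicity.

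To obtain it I would use that $G$ is affine on $(-\infty, y_*)$: there $G'(x) = -k_2$ and $G''(x) = 0$, so with $Af = \frac{\sigma^2}{2}f'' - \mu f'$ the value $AG(x) = \mu k_2$ is constant. By \lemref{lem-F-optimal-dbm} the minimizer obeys $y_* < \lbar x < 0$, so on the whole interval $(-\infty, y_*)$ one has $c_0(x) = -c_b x$ and therefore $AG(x) + c_0(x) = \mu k_2 - c_b x$, whose $x$-derivative is $-c_b < 0$. Thus $AG + c_0$ is strictly decreasing and \cndref{eq-AG-c0} holds. Applying \propref{prop-G-qvi} then yields $G \in C^1(\I) \cap C^2(\I-\set{y_*})$ together with the system \eqref{eq-G-qvis}; in particular the potential kink of $c_0$ at the origin is harmless because $g_0 \in C^2(\I)$, so no separate matching at $x=0$ is needed.

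I do not expect a genuine obstacle: the entire analytic content lives in \propref{prop-G-qvi}, and the only model-specific computation is the reduction of $AG + c_0$ to the affine function $\mu k_2 - c_b x$ on the back-order region. The single point requiring care is confirming $y_* < 0$, which guarantees that exactly one branch of $c_0$ governs $(-\infty, y_*)$; this is furnished by \lemref{lem-F-optimal-dbm}, where $y_* < \lbar x < 0$ was shown.
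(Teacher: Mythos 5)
Your proposal is correct and follows essentially the same route as the paper: the paper's proof likewise reduces everything to verifying \cndref{eq-AG-c0} by computing $AG(x)+c_0(x)=\mu k_2 - c_b x$ on $(-\infty,y_*)$ (using $y_*<0$) and then invoking \propref{prop-G-qvi}. Your additional remarks on matching \eqref{eq-G-dbm} with \eqref{eq-G-fn} and on the smoothness of $g_0$ at the origin are accurate but are already implicit in the paper's setup.
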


\begin{proof} 
In view of Proposition \ref{prop-G-qvi}, it is enough to show that \cndref{eq-AG-c0} is satisfied.  Note that for all $x < y_{*} < 0$,  we have $AG(x) + c_{0}(x) = \mu k_{2} - c_{b} x,$ which   is strictly decreasing on $(-\infty, y_{*})$. This verifies \cndref{eq-AG-c0} and hence completes the proof.
\end{proof}

We consider a new class of admissible ordering policies for which the analysis is sufficient to prove optimality of the $(y_*,z_*)$ policy in the class ${\cal A}$.  This new class will be shown to be a subclass of ${\cal A}_1$ for this model.

\begin{defn} \label{A2-def}
Let ${\cal A}_2$ be the set of $(\tau,Y) \in {\cal A}$ such that the post-order inventory levels $\{X(\tau_k): k \in \N\}$ are uniformly bounded above.    
\end{defn}
Notice that the collection of $(s,S)$ policies form a subset of ${\cal A}_2$.  We need the following lemma in order to verify the conditions of \propref{add-pol}.

\begin{lem}\label{lem-X+L2}
Let $x_0$ denote the initial inventory level, $(\tau,Y) \in {\cal A}_2$ and $X$ be the resulting inventory process. Define the process $X^+$ by $X^+(t) = X(t) \vee 0$ for all $t \geq 0$.  Then $X^+ \in L^2([0,t]\times \Omega)$ for each $t \geq 0$, with 
\begin{equation} \label{X-plus-L2-bd}
\E\left[\int_0^t (X^+(s))^2\, \d s\right] \leq \widehat{K} t
\end{equation}
for some positive constant $\widehat{K}$ which depends on $\mu$, $\sigma$, $y$ and $z$ but not on $t$.
\end{lem}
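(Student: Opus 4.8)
The plan is to dominate the controlled process $X$ pathwise by a \emph{stationary} $(y,z)$-policy process whose upper tail decays exponentially, and then integrate a time-independent second-moment bound. Since the integrand is nonnegative, Tonelli's theorem gives $\E[\int_0^t (X^+(s))^2\,\d s] = \int_0^t \E[(X^+(s))^2]\,\d s$, so it suffices to produce a constant $\widehat K < \infty$ with $\E[(X^+(s))^2] \le \widehat K$ for every $s \ge 0$; the bound $\widehat K\, t$ then follows at once.

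Because $(\tau,Y) \in {\cal A}_2$, there is a constant $K$ with $X(\tau_k) \le K$ for all $k \in \N$. First I would fix a pair $(y,z) \in {\cal R}$ with $y > K \vee x_0$ and, on the same probability space driven by the same $W$, construct the $(y,z)$-ordering process $\widetilde X$ of \eqref{sS-tau-def} started from the stationary law $\pi$ of \eqref{eq-pi-defn}. By \propref{sS-stationary} this makes $\widetilde X$ stationary, so $\widetilde X(s) \sim \pi$ for every $s \ge 0$, and since $\pi$ is supported on $[y,\infty)$ we have $\widetilde X(0) \ge y$ almost surely. The key step is the pathwise domination $X(s) \le \widetilde X(s)$ for all $s \ge 0$: initially $X(0) \le K \vee x_0 < y \le \widetilde X(0)$; between order times both processes evolve with the same drift $-\mu\,\d t$ and the same diffusion $\sigma\,\d W$, so their difference is constant there; every order of $X$ resets it to a level $\le K < y \le \widetilde X$, and every order of $\widetilde X$ only pushes $\widetilde X$ further above $X$. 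This is exactly the comparison carried out in the proof of \lemref{lem:mg}, and it yields $X^+(s) = X(s) \vee 0 \le \widetilde X(s) \vee 0$ pointwise.

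It then remains to bound the second moment of the dominating process. By stationarity, $\E[(\widetilde X(s) \vee 0)^2] = \int_{\R} (x \vee 0)^2\, \pi(x)\,\d x$ is independent of $s$. Using \eqref{eq-pi-defn} together with the explicit speed density $m(x) = \sigma^{-2} e^{-2\mu x/\sigma^2}$ of this model, on $[z,\infty)$ the density $\pi$ is a constant multiple of $e^{-2\mu x/\sigma^2}$, so its right tail is exponential and the integral converges; on the bounded interval $[y,z]$ the density is bounded and contributes a finite amount, while on $(a,y)$ it vanishes. Setting $\widehat K := \int_{\R}(x\vee 0)^2 \pi(x)\,\d x < \infty$ — a constant depending on $\mu,\sigma,y,z$ but not on $t$ — gives $\E[(X^+(s))^2] \le \widehat K$ for all $s$, which combined with the first paragraph completes the proof.

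The main obstacle is the pathwise comparison, namely verifying that the domination $X \le \widetilde X$ is preserved through the jumps of both processes and through the diffusive evolution between them; this is precisely the coupling argument already established in \lemref{lem:mg}, so no genuinely new difficulty arises. Verifying finiteness of the stationary second moment is routine given the exponential upper tail of $\pi$. (Alternatively, one may avoid the comparison process entirely by writing $X(s) \le (K \vee x_0) + \sup_{0 \le u \le s}[\sigma(W(s)-W(u)) - \mu(s-u)]$, recognizing the supremum as the running maximum of a negatively drifted Brownian motion, which is stochastically dominated by its exponentially distributed all-time maximum; this gives the same uniform second-moment bound with a constant depending only on $K\vee x_0$, $\mu$ and $\sigma$.)
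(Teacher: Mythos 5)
Your proposal is correct and follows essentially the same route as the paper: it couples $X$ pathwise below a stationary $(y,z)$-policy process $\widehat X$ with $x_0 \vee K < y$, uses stationarity to get a time-independent bound $\E[(X^+(s))^2] \le \int x^2\,\pi(x)\,\d x =: \widehat K$, and integrates over $[0,t]$. The alternative via the running maximum of a drifted Brownian motion is a nice aside, but the core argument matches the paper's proof.
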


\begin{proof} Let $(\tau,Y) \in {\cal A}_2$ and let $0 < K< \infty$ denote an upper bound on the post-order inventory levels.  Let $t > 0$ be fixed. 

Select $y$ and $z$ with $x_0 \vee K < y < z$.  Let $\widehat{X}$ denote the stationary inventory process that uses the $(y,z)$-ordering policy of \eqref{sS-stationary} with initial distribution given by the stationary density $\pi$ of \eqref{eq-pi-defn}-\eqref{norm-const}.  Let $\{\theta_j: j\in \N\}$ denote the order times for the $(y,z)$-policy.  Note that $\widehat{X}(t) \geq y$ for all $t \geq 0$.  For this example, 
\begin{equation} \label{pi-def-dbm}
\pi(x) =  \left\{\begin{array}{cl}
0 & \quad \mbox{for } -\infty < x \leq y, \\
\frac{1}{z-y} \left(1 - e^{-(2\mu/\sigma^2)(x-y)}\right), & \quad \mbox{for } y < x \leq z, \rule[-12pt]{0pt}{12pt} \\
\frac{e^{(2\mu z/\sigma^2)} - e^{(2\mu y/\sigma^2)}}{z-y}\cdot e^{-(2\mu/\sigma^2)x}, & \quad \mbox{for } z < x.
\end{array}  \right.
\end{equation}

Observe $\pi>0$ on $(y,\infty)$ and $\pi=0$ on $(-\infty,y]$.  Since $x_0 \vee K < y$, it follows that $X^+(0-) \leq X^+(0) < \widehat{X}(0)$.  Moreover, at each order time $\tau_k$, $X^+(\tau_k) \leq K$ so again $X^+(\tau_k) < \widehat{X}(\tau_k)$.  Finally, for each $k$, observe that on the inter-order interval $(\tau_k,\tau_{k+1})$ both processes $X$ and $\widehat{X}$ evolve according to the same drifted Brownian motion, except at times $\theta_j \in (\tau_k,\tau_{k+1})$ when $\widehat{X}$ increases.  Hence both $X(t) < \widehat{X}(t)$ for all $t\geq 0$ and, more importantly, $X^+(t) < \widehat{X}(t)$ for all $t \geq 0$.

It therefore follows that for any $ t \ge 0$,  
\begin{equation}\label{eq-X+inL2}
\begin{aligned}
 \E_{x_0} [(X^{+}(t))^{2}] &  \le  \E_{\pi} [(\widehat X(t))^{2}] =  \int_{-\infty}^{ \infty} x^{2} \pi (x) \d x \\ 
    &  =  \int_y^z  \frac{x^2 }{z-y} \Big(1 - e^{-(2\mu/\sigma^2)(x-y)}\Big)\d x  \\ & \qquad +  \int_z^\infty  \left(\mbox{$\frac{e^{(2\mu z/\sigma^2)} - e^{(2\mu y/\sigma^2)}}{z-y}$}\right) x^2 e^{-(2\mu/\sigma^2)x} \d x \\
    &   = \widehat{K} < \infty,
\end{aligned}
\end{equation}
where $\widehat{K} = \widehat{K} (\mu, \sigma, y, z) > 0 $ is independent of $t$. Integrating this bound over the interval $[0,t]$ establishes \eqref{X-plus-L2-bd}.   
\end{proof}

 \begin{prop}  \label{prop-bdd-right-locations-dbm}
For the drifted Brownian motion inventory model, ${\cal A}_2 \subset {\cal A}_1$ and hence $F_{*} \le J(\tau, Y)$ for all $(\tau, Y )\in \A_2$. 
\end{prop}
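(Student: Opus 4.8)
The plan is to verify, for every policy in ${\cal A}_2$ of finite cost, the three hypotheses of \propref{add-pol}; policies of infinite cost satisfy $F_*\le J(\tau,Y)$ trivially. So first I would fix $(\tau,Y)\in{\cal A}_2$ with resulting process $X$ and post-order levels bounded above by some $K$, and show that the process $\widehat M$ of \eqref{eq-M-process} is a martingale by checking the conditions of \lemref{lem:mg} for this model. Condition (a) is immediate, since $\frac{\partial c_1}{\partial y}(x,z_*)\sigma(x)=-k_2\sigma$ is constant. For (b), differentiating \eqref{eq-G-dbm} on $(0,\infty)$ gives $G'(x)=\frac{c_h}{\mu}x+\frac{\sigma^2 c_h}{2\mu^2}-\frac{F_*}{\mu}$, which is strictly increasing and eventually positive, so $|G'(x)\sigma|$ is non-decreasing on $(\overline y,\infty)$ for $\overline y$ large enough. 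For (c), $G'$ is bounded on $(-\infty,0]$ (it equals $-k_2$ on $(-\infty,y_*]$ and is continuous on the compact interval $[y_*,0]$) and grows at most linearly on $(0,\infty)$, while the stationary density $\pi$ of \eqref{pi-def-dbm} decays exponentially at $+\infty$; hence $\int (G'(x)\sigma)^2\pi(x)\,\d x<\infty$ for each $(y,z)\in{\cal R}$. Thus \lemref{lem:mg} applies and $\widehat M$ is a martingale.

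Next I would establish the growth comparison $\E[G(X(t))]\le C_1\E[c_0(X(t))]+C_2$. The subtlety is that $G$ grows quadratically as $x\to+\infty$ (like $\frac{c_h}{2\mu}x^2$) while $c_0(x)=c_h x$ is only linear there, so no pointwise bound of the form $G\le C_1 c_0+C_2$ can hold. Instead I would prove the pointwise inequality $G(x)\le C_1 c_0(x)+\frac{c_h}{2\mu}(x^+)^2+C_0$ for suitable constants: on $(-\infty,y_*]$, $G(x)=k_2|x|+\text{const}$ is dominated by $\frac{k_2}{c_b}c_0(x)$ up to an additive constant; on the compact interval $[y_*,0]$, $G$ is bounded; and on $(0,\infty)$, $G(x)=\frac{c_h}{2\mu}x^2+\bigl(\frac{\sigma^2 c_h}{2\mu^2}-\frac{F_*}{\mu}\bigr)x\le \frac{c_h}{2\mu}(x^+)^2+C_1 c_0(x)$. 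Taking expectations and invoking the uniform-in-$t$ second-moment bound $\E[(X^+(t))^2]\le\widehat K$ established in \eqref{eq-X+inL2} within the proof of \lemref{lem-X+L2}—which is exactly where membership in ${\cal A}_2$ enters, through pathwise domination of $X^+$ by a stationary $(y,z)$-process—the quadratic term is absorbed into the constant, giving $\E[G(X(t))]\le C_1\E[c_0(X(t))]+C_2$ with $C_2=\frac{c_h}{2\mu}\widehat K+C_0$.

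Finally, for $(\tau,Y)\in{\cal A}_2$ with $J(\tau,Y)<\infty$, the martingale property of $\widehat M$ together with the growth comparison are precisely the hypotheses of \propref{add-pol}, which yields $(\tau,Y)\in{\cal A}_1$; \thmref{thm-verification} then gives $F_*\le J(\tau,Y)$. When $J(\tau,Y)=\infty$ the inequality $F_*\le J(\tau,Y)$ is automatic. The main obstacle is the mismatch between the quadratic growth of $G$ and the linear growth of $c_0$ in the holding-cost region, which rules out a naive pointwise domination; the resolution is the uniform-in-$t$ second-moment estimate of \lemref{lem-X+L2}, which rests on the defining property of ${\cal A}_2$ that post-order levels are bounded above and thereby controls $\E[(X^+(t))^2]$ so that the quadratic contribution to $\E[G(X(t))]$ is merely a bounded constant.
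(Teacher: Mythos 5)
Your proposal is correct and follows essentially the same route as the paper: verify the hypotheses of \lemref{lem:mg} to get the martingale property of $\widehat M$, then establish $\E[G(X(t))]\le C_1\E[c_0(X(t))]+C_2$ by splitting $G$ over the regions $(-\infty,y_*]$, $[y_*,0]$, $(0,\infty)$ and absorbing the quadratic term via the uniform second-moment bound \eqref{eq-X+inL2} of \lemref{lem-X+L2}, before invoking \propref{add-pol} and \thmref{thm-verification}. Your explicit identification of the quadratic-versus-linear growth mismatch and its resolution is exactly the (unstated) point of the paper's chain of inequalities.
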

 
\begin{proof} 
Let $(\tau,Y) \in {\cal A}_2$ and let $X$ be the resulting inventory process.  The inequality holds whenever $J(\tau,Y) = \infty$ so assume $J(\tau,Y) < \infty$.  We verify the conditions of \defref{A1-def}.

The conditions of \lemref{lem:mg} are easily verified with $\overline{y}=0$ and the stationary density given by \eqref{pi-def-dbm} so $\widehat{M}$ defined as in \eqref{eq-M-process} by $\widehat{M}(t) : = \int_{0}^{t} G'(X(s)) \sigma \d W(s)$ is a martingale.  Thus in view of Remark \ref{rem-M-mg-transversality}, \propref{add-pol}, and Theorem \ref{thm-verification} the result will follow if we can show there exist positive constants $C_1$ and $C_2$ such that $\E[G(X(t))] \leq C_1\E[c_0(X(t))] + C_2$.
Now observe 
\begin{align*}
& \E_{x_{0}}[G(X(t))] \\
&= \E_{x_{0}}\left[G(X(t))\( I_{\{X(t) \le y_*\}} + I_{\{ y_* \leq X(t)  \le  0 \}} + I_{\{  X(t)  > 0\}}\)\right] \\ 
   & \le \E_{x_{0}}\! \left[ - k_2 X(t) I_{\{X(t) \le y_*\}} + (g_0(X(t))- F_* \zeta(X(t))) [I_{\{ y_* \leq X(t)  \le  0 \}} + I_{\{  X(t)  > 0\}}] \right] \\ 
   & \le  \E_{x_{0}}\biggl[ - k_2 X(t) I_{\{X(t) \le y_*\}} + C_1    
   + \left[\mbox{$\frac{ c_h}{2  \mu} X(t)^2 + \(\frac{c_h  \sigma^2  }{2 \mu^2} - 
   \frac{ F_*  }{\mu} \) X(t)\biggl] I_{\{  X(t)  > 0\}}$}  \right] \\
   & \le   C_1  + K_2 \E_{x_{0}}\left[  |X(t)|\right] \\
	 & = C_1 + C_2 \E[c_0(X(t))],
\end{align*} where the last inequality follows from \eqref{eq-X+inL2}.  Notice $C_1$ and $C_{2}$ are positive constants independent of $t$, establishing the result.
\end{proof}

The following proposition is from \cite{HeYZ-15}.
\begin{prop} \label{prop-comparison-dbm}
For any $(\tau,Y)\in \mathcal A$ and any $\e > 0$,  there exists an ordering policy $(\theta,Z)\in {\cal A}_2$ such $J(\theta,Z) \leq J(\tau,Y) + \e$.
\end{prop}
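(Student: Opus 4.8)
The plan is to verify the (in-expectation) comparison of \cite{HeYZ-15}, namely that allowing unbounded post-order levels does not lower the optimal value, by truncating $(\tau,Y)$ at a high level $K$ and letting $K\to\infty$. First I would dispose of the case $J(\tau,Y)=\infty$, and henceforth assume $J(\tau,Y)<\infty$; this forces $\limsup_{t}t^{-1}\E\int_0^t c_0(X(s))\,\d s<\infty$, so the holding term $c_h X^+$ makes the cost-weighted occupation of high levels, $t^{-1}\E\int_0^t c_0(X(s))I_{\{X(s)>K\}}\,\d s$, a tail that vanishes as $K\to\infty$. A second, purely algebraic, observation comes from integrating \eqref{controlled-dyn}: $\sum_{k:\tau_k\le t}Y_k=X(t)-x_0+\mu t-\sigma W(t)$, whence the proportional part of the ordering cost has rate exactly $k_2\mu$ for \emph{every} finite-cost policy. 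Thus only the fixed cost and the running cost can separate $(\tau,Y)$ from a truncation, and the proportional discrepancy between two coupled policies is $k_2\,t^{-1}\E[X(t)-\tilde X(t)]$.

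Fix $K$ large and build $(\theta,Z)\in{\cal A}_2$ on the same Brownian motion $W$. The new policy imitates $(\tau,Y)$ but caps every post-order level at $K$, and closes the resulting gap $G(t):=X(t)-\tilde X(t)\ge 0$ by a catch-up to level $K$ at the first instant the original inventory descends far enough that $\tilde X$ would otherwise enter the back-order region; between orders the two processes share the increments $-\mu\,\d t+\sigma\,\d W$, so $G$ is piecewise constant, $\tilde X\le X$ throughout, and every post-order level of $(\theta,Z)$ is at most $K$, giving $(\theta,Z)\in{\cal A}_2$. I would then bound the three cost differences. Since $G\ge 0$ and $c_0$ is Lipschitz with constant $\max\{c_b,c_h\}$, the running-cost excess is at most $\max\{c_b,c_h\}\,t^{-1}\E\int_0^t G(s)\,\d s$; because $G$ is supported on excursions of $X$ above $K$ and is comparable there to the descent height, this is dominated by the tail holding cost and hence vanishes as $K\to\infty$. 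The proportional excess $k_2\,t^{-1}\E[G(t)]$ vanishes for the same reason, as the catch-ups keep $G$ bounded.

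The main obstacle is the fixed-cost excess, $k_1$ times the long-run rate of catch-up orders: because a diffusion crosses any level infinitely often, one cannot simply catch up at each return to $K$. The resolution is that a catch-up is needed only after a genuinely high order (one whose post-order level exceeds $K$), and each such order forces the original inventory to linger above $K$ for an expected time of order $(z_k-K)/\mu$---here the negative drift $-\mu$ is essential---so that finiteness of $J(\tau,Y)$, through the holding cost accrued at rate at least $c_h K$ during these sojourns, caps the frequency of high orders and drives it to $0$ as $K\to\infty$. Assembling the three estimates, for $K$ large enough the total excess $J(\theta,Z)-J(\tau,Y)$ is below $\e$. The careful bookkeeping that keeps the catch-up order count controlled while maintaining post-order levels at most $K$ is exactly the technical core supplied by \cite{HeYZ-15}; the analogous statement for the geometric Brownian motion model is proved pathwise in the appendix as \propref{prop-comparison-gbm2}.
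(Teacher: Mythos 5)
First, a point of comparison: the paper does not prove \propref{prop-comparison-dbm} at all --- it states that ``the following proposition is from \cite{HeYZ-15}'' and imports the result without argument. Your proposal is therefore not competing against an in-paper proof; it is an attempt to reconstruct the truncation argument of \cite{HeYZ-15}, and you yourself defer ``the careful bookkeeping'' to that reference. The skeleton you describe (cap post-order levels at a high level $K$, couple $\tilde X \le X$ on the same Brownian path so the gap is piecewise constant, close the gap with occasional catch-up orders, let $K\to\infty$) is indeed the right kind of argument, but several steps as written are either incorrect or genuinely incomplete, so the sketch cannot stand as a proof without the citation doing the real work.

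Concretely: (1) the conclusion that the proportional cost rate ``is exactly $k_2\mu$'' requires $t^{-1}\E[X(t)]\to 0$, which does not follow from $J(\tau,Y)<\infty$ --- finiteness of the average cost controls the time-average $t^{-1}\E\!\int_0^t c_0(X(s))\,\d s$, not the endpoint value $\E[X(t)]$; similarly, the proportional excess $k_2\,t^{-1}\E[X(t)-\tilde X(t)]$ vanishes only if you show the accumulated gap does not grow linearly, which is not automatic when high orders recur before earlier gaps are worked off. (2) The catch-up rule is ambiguous: if the gap $G(t-)$ is smaller than $K$ when $\tilde X$ reaches $0$, ordering ``up to $K$'' gives $\tilde X(t)>X(t)$ and destroys the monotone coupling on which the comparison rests; you must order up to $X(t)\wedge K$, and a gap exceeding $K$ can then only be worked off $K$ units at a time. (3) The claim that $G$ ``is supported on excursions of $X$ above $K$'' is false --- the gap persists until $X$ descends to the gap's size, long after $X$ has left $(K,\infty)$ --- so your bound on the running-cost excess via the tail holding cost is unjustified as stated (it can be rescued by noting $0\le \tilde X\le X$ and $c_0$ increasing on $[0,\infty)$ wherever $G>0$, but that is a different argument). (4) Most seriously, the fixed-cost excess is not controlled: an original order to a level $z_k$ only marginally above $K$ still forces a catch-up order costing $k_1$, yet contributes an expected sojourn above $K$ of only $(z_k-K)/\mu$, which can be arbitrarily small. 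Your holding-cost argument therefore bounds the long-run rate of the excess \emph{quantity} $\sum_k (z_k-K)^+$ but not the \emph{number} of orders exceeding $K$, and it is exactly this count that multiplies $k_1$. Handling these marginal exceedances is the technical core of \cite{HeYZ-15}; without it the bound $J(\theta,Z)\le J(\tau,Y)+\e$ is not established.
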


It follows from this proposition that for each $(\tau,Y) \in {\cal A}$, $J(\tau,Y)$ is in the closure of $\{J(\tau,Y): (\tau,Y) \in {\cal A}_2\}$ from which the final theorem immediately follows.

\begin{thm}
Let $F$ be defined by \eqref{F-def-dbm} and let $(y_*,z_*)$ be as in \lemref{lem-F-optimal-dbm}.  Then the $(y_*,z_*)$ ordering policy is optimal in the class ${\cal A}$.
\end{thm}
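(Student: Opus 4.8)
The plan is to chain together three results already in hand: the verification lower bound of \thmref{thm-verification}, the inclusion ${\cal A}_2 \subset {\cal A}_1$ established in \propref{prop-bdd-right-locations-dbm}, and the approximation result \propref{prop-comparison-dbm} taken from \cite{HeYZ-15}. The goal is to show $F_* \le J(\tau,Y)$ for every $(\tau,Y) \in {\cal A}$, since the matching upper bound is already available: the $(y_*,z_*)$ policy of \eqref{sS-tau-def} has all post-order levels equal to $z_*$, so it lies in ${\cal A}_2 \subset {\cal A}_1 \subset {\cal A}$, and by \propref{prop-sS-cost} (equivalently, by the first line of the proof of \thmref{thm-verification}) it attains $J(\tau^*,Y^*) = F_*$.

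First I would reduce to proving the lower bound for an arbitrary fixed $(\tau,Y) \in {\cal A}$. If $J(\tau,Y) = \infty$ the inequality $F_* \le J(\tau,Y)$ is immediate, so assume $J(\tau,Y) < \infty$. Fix $\e > 0$ and apply \propref{prop-comparison-dbm} to obtain a policy $(\theta,Z) \in {\cal A}_2$ with $J(\theta,Z) \le J(\tau,Y) + \e$. Because ${\cal A}_2 \subset {\cal A}_1$ by \propref{prop-bdd-right-locations-dbm}, \thmref{thm-verification} applies to $(\theta,Z)$ and yields $F_* \le J(\theta,Z)$. Combining the two inequalities gives $F_* \le J(\tau,Y) + \e$, and since $\e > 0$ was arbitrary, $F_* \le J(\tau,Y)$.

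Taking the infimum over $(\tau,Y) \in {\cal A}$ then shows $F_* \le \inf_{(\tau,Y)\in{\cal A}} J(\tau,Y)$, while the $(y_*,z_*)$ policy witnesses the reverse inequality $\inf_{(\tau,Y)\in{\cal A}} J(\tau,Y) \le J(\tau^*,Y^*) = F_*$. Hence the infimum equals $F_*$ and is attained by the $(y_*,z_*)$ ordering policy, which is therefore optimal in ${\cal A}$.

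I do not expect a genuine obstacle at this stage: all of the analytic work---the stationary-distribution and renewal-reward computation behind $F$, the QVI/verification argument, and the square-integrability estimate of \lemref{lem-X+L2} ensuring ${\cal A}_2 \subset {\cal A}_1$---has been carried out in the preceding results. The only point requiring care is that \propref{prop-comparison-dbm} provides merely $\e$-approximate domination by ${\cal A}_2$ policies rather than a single dominating policy, so the argument must pass to the limit $\e \downarrow 0$ instead of selecting one comparison policy; treating the $J(\tau,Y)=\infty$ case separately (trivially) is the only other bookkeeping needed.
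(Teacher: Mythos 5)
Your proposal is correct and follows exactly the paper's own route: the paper likewise combines \propref{prop-comparison-dbm} (so that every $J(\tau,Y)$ lies in the closure of the ${\cal A}_2$-costs) with the lower bound $F_* \le J(\theta,Z)$ from \propref{prop-bdd-right-locations-dbm} and \thmref{thm-verification}, and notes that the $(y_*,z_*)$ policy attains $F_*$ by \propref{prop-sS-cost}. Your write-up simply makes explicit the $\e\downarrow 0$ passage that the paper leaves implicit.
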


\subsubsection{Drifted Brownian motion with reflection at $\{0\}$} \label{sect:refl-dBm}
We now consider the model in which the inventory level process is a drifted Brownian motion with reflection at $\{0\}$.  The holding cost rate function is $c_0(x) = k_3 x$ for $x \in {\cal E} = [0,\infty)$ and the ordering cost function is $c_1(y,z) = k_1 + k_2(z-y)$ for $(y,z) \in {\cal R}$.  We note that no back-orders are allowed and $c_0$ does not satisfy \eqref{c0-lim-at-a} at the boundary $\{0\}$.  New to the model is a cost $k_5$ that is charged per unit of reflection.  The generator remains $Af(x) = \frac{\sigma^2}{2} f''(x) - \mu f'(x)$, the jump operator is $Bf(y,z) = f(z) - f(y)$, the scale and speed measures continue to be given by \eqref{dbm-measures}, and $g_0(x) = \frac{c_h}{2\mu}\, x^2 + \frac{\sigma^2c_h}{2\mu^2}\, x$ and $\zeta(x) = \frac{1}{\mu}\, x$ for $x \in {\cal E} = [0,\infty)$.  

Let $(\tau,Y)\in {\cal A}$, $X$ denote the resulting inventory process and $L_0$ denote the local time process of $X$ at $\{0\}$.  The long-term average cost associated with this policy is 
\begin{equation} \label{dBm-lta-refl}
J(\tau,Y) = \begin{array}[t]{l} 
\displaystyle \limsup_{t\rightarrow \infty} \mbox{$\frac{1}{t}$} \E_{x_0}\left[\int_0^t c_0(X(s))\, \d s \right. \\
\qquad \quad \qquad \displaystyle \left. + \sum_{k=1}^\infty I_{\{\tau_k \leq t\}} c_1(X(\tau_k-),X(\tau_k)) + k_5 L_0(t)\right]
\end{array}
\end{equation}
and by the extended It\^{o} formula, for $f \in C^2({\cal E})$,
\setlength{\arraycolsep}{0.5mm}
\begin{eqnarray*}
f(X(t)) = f(x_0) &+& \int_0^t Af(X(s))\, \d s + \sum_{k=1}^\infty I_{\{\tau_k\leq t\}} Bf(X(\tau_k-),X(\tau_k)) \\
&+& \int_0^t Cf(X(s))\, \d L_0(s) + \int_0^t \sigma f'(X(s))\, \d W(s)
\end{eqnarray*}
in which $C$ is the reflection operator $Cf(x) = f'(x)$.

Briefly considering $(y,z)$ ordering policies, note that the order occurs before any reflection when $y=0$ and for $y > 0$, the process $X$ never reaches $\{0\}$.  Thus no reflection occurs under any $(y,z)$ policy.  As a result, the long-term average cost of this class of policies is again given by $F$ of \eqref{F-def-dbm}.  It is straightforward to verify that the optimizing pair of $F$ is $(y_*,z_*) = \left(0,\sqrt{\frac{2k_1\mu}{c_h}}\right)$ and the optimal cost in this class of policies is $F_* = \sqrt{2k_1 \mu c_h} + k_2\mu + \frac{\sigma^2 c_h}{2\mu}$.

We now introduce and analyze a non-Markovian ordering policy.  Consider the ``delayed $(s,S)$ ordering policy with trigger $\ss$'' in which each order is placed to raise the process $X$ to the level $S$ but the subsequent order requires $X$ to first hit a level $\ss<s$ and then is placed when $X$ next hits $s$.  Hitting $\ss$ acts as a trigger in the same way as a knock-in boundary in option pricing.  We may think of $X$ being in ``Phase 1'' prior to the triggering event and in ``Phase 2'' following.  

More specifically, we choose $(y,z) \in {\cal R}$ so $y > 0$ and set the trigger level to be $\ss = 0$.  This policy induces a stationary distribution on ${\cal I}$.  One way by which to find the stationary density is to augment a ``Phase process'' $\Theta$ taking values in $\{1,2\}$ and observe that the pair process $(X,\Theta)$ is Markovian.  The generator of the process acts on functions $f \in C^2({\cal E})$ and $g \in {\cal M}\{1,2\}$ and is $\widehat{A}[fg](x,\theta) = Af(x)g(\theta)$.  The jump operator is 
$$\widehat{B}[fg](x,\theta) = f(x) [g(3-\theta) - g(\theta)] I_{(\ss,1)}(x,\theta) + [f(z)g(3-\theta) - f(x) g(\theta)]I_{(y,2)}(x,\theta)$$
and the reflection operator is $\widehat{C}[fg](x,\theta) = f'(x) g(\theta)I_{\{2\}}(\theta)$ with this being active only when $X(t)=0$.   For the pair process $(X,\Theta)$, with reference to the conditions \eqref{stat-cnd} and \eqref{stat-cnd-refl}, the stationarity condition to be satisfied is
\begin{eqnarray*}
0 &=& \int_{{\cal I}\times \{1,2\}} \widehat{A}[fg](x,\theta)\, \mu_0(\d x\times \d \theta) + \int_{{\cal R}\times \{0,1\}} \widehat{B}[fg](x,\theta)\, \mu_1(\d x\times \d\theta) \\
& & +\; \int_{{\cal E} \times \{1,2\}} \widehat{C}[fg](x,\theta)\, \mu_2(\d x\times \d\theta) 
\end{eqnarray*}
which must hold for all $f \in C^2({\cal E})$ and $g \in {\cal M}(\{1,2\})$.  Since the $\Theta$ process is used merely to make the pair process Markovian, the stationary density for the $X$ process can be computed from this stationarity condition, resulting in
\begin{eqnarray*}
\pi(x) &=& \alpha_1 \left[\mbox{$\frac{1}{\mu}$}\left(e^{2\mu(y-x)/\sigma^2} - 1\right)I_{[0,y]}(x) 
+ \mbox{$\frac{1}{\mu}$} \left(1-e^{-2\mu x/\sigma^2}\right)I_{[0,z]}(x) \right. \\
& & \qquad \left. +\; \mbox{$\frac{1}{\mu}$}\left(e^{2\mu z/\sigma^2} - 1\right) e^{-2\mu x/\sigma^2} I_{(z,\infty)}(x)\right]
\end{eqnarray*}
in which $\alpha_1$ is the normalizing constant $\left(\frac{(z-y)}{\mu} + \frac{\sigma^2}{2\mu^2}(e^{2\mu y/\sigma^2}-1)\right)^{-1}$.  In addition, the long-term average frequency of orders from $y$ to $z$ is $\alpha_1$ and hence the expected cycle length is $\frac{(z-y)}{\mu} + \frac{\sigma^2}{2\mu^2}(e^{2\mu y/\sigma^2}-1)$.  Moreover the expected long-term average amount of reflection using the local time of $X$ is $\kappa=\alpha_1 \frac{\sigma^2}{2\mu}\left(e^{2\mu y/\sigma^2} - 1\right)$; the fact that $\kappa > 0$ means that \eqref{trans-local} of \defref{A1-def} fails so the delayed $(y,z)$ policy with trigger $\ss=0$ is not in ${\cal A}_1$.  Using a renewal argument as in \propref{prop-sS-cost} establishes that the expected long-term average cost associated with this policy is
\begin{eqnarray*}
\widetilde{F}(y,z) := J(\tau,Y) &=& \frac{c_1(y,z) + Bg_0(y,z) + \left(\frac{\sigma^4 c_h}{4\mu^3} + \frac{\sigma^2 k_5}{2\mu}\right)\left(e^{2\mu y/\sigma^2} -1\right)}{B\zeta(y,z) + \frac{\sigma^2}{2\mu^2}\left(e^{2\mu y/\sigma^2} - 1\right)} \\
&=&  \frac{c_1(y,z) + Bg_0(y,z)}{B\zeta(y,z)} \left(\frac{1 + \frac{\left(\frac{\sigma^4 c_h}{4\mu^3} + \frac{\sigma^2 k_5}{2\mu}\right)\left(e^{2\mu y/\sigma^2} -1\right)}{c_1(y,z) + Bg_0(y,z)}}{1 + \frac{\frac{\sigma^2}{2\mu^2}\left(e^{2\mu y/\sigma^2} - 1\right)}{B\zeta(y,z)}}\right)\;.
\end{eqnarray*}
Observe that when $y=0$, the first expression reduces to the long-term average cost associated with a $(0,z)$ ordering policy as it should since ordering occurs before any reflection.  

Consider first policies in which $y > 0$.  The second expression for $\widetilde{F}$ indicates that the cost of the delayed $(y,z)$ ordering policy with trigger $\ss=0$ is smaller than the cost for the standard $(y,z)$ policy when the second factor is less than $1$.  This relation holds when
\begin{displaymath}
 \frac{\left(\frac{\sigma^4 c_h}{4\mu^3} + \frac{\sigma^2 k_5}{2\mu}\right)\left(e^{2\mu y/\sigma^2} -1\right)}{c_1(y,z) + Bg_0(y,z)} <  \frac{\frac{\sigma^2}{2\mu^2}\left(e^{2\mu y/\sigma^2} - 1\right)}{B\zeta(y,z)} 
\end{displaymath}
or, equivalently, since the cost of the $(y,z)$ policy is $F(y,z) = \frac{c_1(y,z) + Bg_0(y,z)}{B\zeta(y,z)}$, when
\begin{displaymath}
\mbox{$\frac{\sigma^{2} c_{h}}{2\mu}$} + k_{5} \mu < F(y,z).
\end{displaymath} 
Since $F(y,z) \ge F_{*} = \sqrt{2 k_{1} c_{h} \mu} + k_{2} \mu + \frac{\sigma^{2} c_{h}}{2 \mu}$, a sufficient condition for the cost of the delayed $(y,z)$ policy with trigger $0$ to be smaller than the $(y,z)$ policy for each $0 < y < z$ is then given by 
$$k_{5} < k_{2} + \sqrt{\mbox{$\frac{2 k_{1} c_{h}}{\mu}$}}.$$ 

When $y=0$, a different analysis is required since $\widetilde{F}(0,z) = F(0,z)$.  Computing the partial derivative of $\widetilde{F}$ with respect to $y$, the denominator is positive and the numerator reduces to 
\setlength{\arraycolsep}{0.5mm}
\begin{eqnarray*}
-\; \mbox{$\frac{c_{h}}{\mu^{2}}$}\, y (z-y) &-& \mbox{$\frac{c_{h}\sigma^{2} y + c_{h} \mu   (z^{2} - y^{2}) + 2 \mu^{2} k_{1}}{2\mu^{3}}$} (e^{2\mu y/\sigma^2} -1) \\
&-& (k_{2} - k_{5})   \left[\mbox{$\frac{z-y }{\mu}$} e^{\frac{2\mu}{\sigma^2} y} +  \mbox{$\frac{\sigma^{2}}{2\mu^{2}}$} (e^{2\mu y/\sigma^2} -1) \right].
\end{eqnarray*}
The first two terms are negative and the last term is also negative when $k_2 > k_5$.  Thus when $k_2 > k_5$, $\frac{\partial \widetilde{F}}{\partial y}(0,z) < 0$ and increasing the ordering level $y$ from $0$ for the delayed policy decreases the long-term average cost.  In particular, taking $z=z_*$, this analysis shows that for some $y > 0$ a delayed $(y,z_*)$ ordering policy with trigger $\ss=0$ has lower cost than the optimal $(0,z_*)$ policy.

In summary, $k_5 < k_2$ is a sufficient condition for the non-Markovian delayed $(y,z)$ ordering policy with trigger $\ss=0$ to have lower cost than the standard $(s,S)$ policy with $(s,S)=(y,z)$.

\subsubsection{Just-in-time ordering vs. $(s,S)$ ordering} \label{sect:just-in-time}
The inventory model is again given by a drifted Brownian motion process with drift rate $-\mu < 0$ and with reflection at $\{0\}$.  We interpret the reflection at $\{0\}$ of the process to be an ordering policy which places an order so as to only meet the demand when the stocks are depleted; such orders can be considered ``emergency'' orders for a ``just-in-time'' inventory policy.  The cost structure continues to be as in \sectref{sect:refl-dBm}.  In particular, the ordering cost function is $c_1(y,z) = k_1 + k_2(z-y)$ and the cost per unit of reflection is $k_5$.  The cost rate $k_5$ then represents the cost per unit for an emergency delivery such as is provided by overnight shipping.  It is natural to have $k_5 > k_2$ since an ``emergency'' order should be more costly than a ``regular'' order.  Write $k_5 = k_2 + \tilde{k}_5$ with $\tilde{k}_5 > 0$; thus $\tilde{k}_5$ represents the cost premium (per unit) for an emergency order.  

As in \sectref{sect:refl-dBm}, the optimal $(y_*,z_*)$ ordering policy has optimal levels $y_*=0$ and $z_*= \sqrt{\frac{2k_1\mu}{c_h}}$, and optimal cost $F_* = \sqrt{2k_1 c_h \mu} + k_2 \mu + \frac{\sigma^2 c_h}{2\mu}$.  Note the optimal policy orders amount $z_*$ every time the inventory level falls to $0$ so no reflection occurs; that is, there are no just-in-time ordering costs. 

Consider the {\em solely}\/ just-in-time policy which only places orders so as to fulfill unmet demand.  The resulting inventory level process is the drifted Brownian motion with reflection at $\{0\}$.  It is straightforward to verify using \eqref{stat-cnd-refl} (see also p.~129 of \cite{boro:02}) that the stationary density for this process is
$$\pi(x) = \mbox{$\frac{2\mu}{\sigma^2}$}\, e^{-2\mu x/\sigma^2} \qquad \mbox{for } x \geq 0$$
and the expected long-term average amount of just-in-time ordering (local time at $\{0\}$) is $\kappa = \mu$.  Again, $\kappa > 0$ indicates that the just-in-time policy is not in the class ${\cal A}_1$.  It now follows that the long-term average cost is 
$\widehat{F} = \mbox{$\frac{\sigma^2 c_h}{2\mu}$} + k_5 \mu$.

Comparing $F_*$ and $\widehat{F}$, we see that the just-in-time ordering policy is better than the optimal $(y_*,z_*)$ policy exactly when
$$(k_2 + \tilde{k}_5)\mu = k_5 \mu < k_2\mu + \sqrt{2k_1 c_h \mu} = \left(k_2 + \mbox{$\frac{c_h}{\mu} \sqrt{\frac{2 k_1 \mu}{c_h}}$} \right) \mu = \left(k_2 + \mbox{$\frac{c_h z_*}{\mu}$}\right)\mu;$$
that is, when the cost premium $\tilde{k}_5$ per unit for an emergency order is smaller than $\frac{c_h z_*}{\mu}$.  Since $\frac{z_*}{\mu}$ is the expected cycle length of the $(0,z_*)$ policy, the just-in-time policy is preferred to the optimal $(0,z_*)$ policy exactly when the expected holding cost per unit over a cycle exceeds the cost premium per unit for the emergency orders.

 \subsection{Geometric Brownian motion storage model} \label{sect:gBM}
Without any ordering, the inventory process is a geometric Brownian motion satisfying the stochastic differential equation
$$dX_0(t) = - \mu X_0(t)\d t + \sigma X_0(t) \d W(t), \qquad X(0) = x_0\in \I=(0,\infty),$$
in which $\mu, \sigma > 0$ and $W$ is a standard Brownian motion process; the drift rate is negative.  The generator of this process is $Af(x) = \frac{\sigma^2}{2}\/ x^2 f''(x) - \mu x f'(x)$ which acts on functions $f \in C^2(\R ^+)$.  The jump operator remains $Bf(y,z) = f(z) - f(y)$ for a jump from $y$ to $z>y$.

For this model $s(x) = x^{{2\mu}/{\sigma^{2}}}$ and $m(x) = \sigma^{-2}x^{-2-{2\mu}/{\sigma^{2}}}$ and therefore the scale and speed measures are respectively given, for $[l,x] \subset \I$, by 
\begin{displaymath}\begin{aligned}
S[l,x] & = \mbox{$\frac{\sigma^{2}}{2\mu+ \sigma^{2}}$} \left[x^{ 1+  {2\mu}/{\sigma^{2}}}-l^{1+  2\mu/\sigma^{2}}\right] \quad \mbox{and} \\
M[l,x] & =  \mbox{$\frac{1}{2\mu + \sigma^{2}}$} \left[ l^{-1-2\mu/\sigma^{2}} - x^{-1-2\mu/\sigma^{2}}\right].
\end{aligned}\end{displaymath} 
In particular, it follows that $0$ is an attracting natural boundary while $\infty$ is a non-attracting natural boundary. This verifies Condition \ref{diff-cnd}.

In the following subsection, the cost formulation is given for the standard model under consideration.  Later subsections then examine variations on this formulation.

\subsubsection{Standard cost model}

The holding cost rate function is $c_0(x) = k_3 x + k_4 x^\beta$ in which $k_3, k_4 > 0$ and $\beta < 0$.  Observe that the term $k_3 x$ incurs large costs for large inventory levels whereas $k_4 x^\beta$ is costly as the level approaches $0$.  We examine models in Sections~\ref{case-1}-\ref{case-3} in which some of the coefficients $k_2$, $k_3$ and $k_4$ equal $0$.  

Using \eqref{eq-g0-fn} and \eqref{eq-psi-fn} (with $C=1$) respectively, we determine 
\begin{displaymath} 
\begin{aligned}
& g_0(x) = \mbox{$\frac{k_3}{\mu} (x -1) - \frac{k_4}{\tilde\rho(\beta)} (x^\beta -1)$}, \quad  & x \in \I,\\ 
& \zeta(x) = \mbox{$\frac{2}{2\mu+\sigma^2}$} \ln(x), \quad  &  x \in \I. 
\end{aligned} 
\end{displaymath}
Observe that $\tilde\rho(\beta) := \frac{\sigma^2}{2}\/ \beta^2 - (\mu+ \frac{\sigma^2}{2}) \beta > 0$.

Turning to the ordering costs, fix $\eta$ with $0 < \eta \leq 1$ and define $c_1$ on ${\cal R}$ by
\begin{equation} \label{gbm2-c1-def}
c_1(y,z) = k_1 + k_2(z^\eta - y^\eta).
\end{equation}  
When $\eta < 1$, the function $x \mapsto x^\eta$ is strictly concave.  Note that 
$$k_2 (z^\eta - y^\eta) = k_2\cdot \frac{(z^\eta - y^\eta)}{(z - y)} \cdot (z-y)$$
so the cost per unit ordered $k_2\,\frac{(z^\eta-y^\eta)}{(z-y)}$ is adjusted based on the quantity $y$ on hand prior to ordering and the quantity $z$ following ordering.  In particular, when $1 < y < z$, $z^\eta - y^\eta < z - y$ so by adopting this model, the manufacturer encourages his customers to maintain large inventory levels.  The parameter $\eta$ can be viewed as a measure of the amount of importance the manufacturer places on his customers having large inventories, with $\eta$ close to $0$ providing more of a discount for large inventory levels.

\begin{lem} \label{lem-cnd2-gbm2}
Condition \ref{cost-cnds} is satisfied.
\end{lem}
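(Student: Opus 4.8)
The plan is to verify the two halves of \cndref{cost-cnds} separately, and to dispose of the ordering cost $c_1$ first, since \eqref{gbm2-c1-def} fits precisely the template recorded in the remark following \cndref{cost-cnds}. Setting $H(x) := k_2 x^{\eta}$, the cost is exactly $c_1(y,z) = k_1 + H(z) - H(y)$, and because $k_2 \ge 0$ and $0 < \eta \le 1$, the function $H$ is continuous and nondecreasing on $\I = (0,\infty)$. Continuity of $c_1$ on $\overline{\cal R}$ is then immediate. For the lower bound, $z \ge y$ gives $H(z) \ge H(y)$, so $c_1(y,z) \ge k_1 > 0$; the same inequality also yields $c_1(y,z) \ge k_1 = c_1(z,z)$, which is \eqref{decr-cost}. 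The equal-displacement identity \eqref{eq-c1-equal-displacement} is a telescoping identity valid for any $H$: both sides equal $2k_1 + H(z) + H(y) - H(w) - H(x)$. This settles \cndref{cost-cnds}(b).

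For \cndref{cost-cnds}(a), I would first record that $c_0(x) = k_3 x + k_4 x^{\beta}$, with $k_3, k_4 > 0$ and $\beta < 0$, is continuous on $(0,\infty)$ and blows up at both ends: $k_4 x^{\beta} \to \infty$ as $x \searrow 0$ and $k_3 x \to \infty$ as $x \to \infty$. Consequently each sublevel set $\{x \in \I : c_0(x) \le L\}$ is closed and bounded away from both $0$ and $\infty$, hence a compact subset of $\I$, which is the inf-compactness requirement. Since $0$ and $\infty$ are both natural boundaries, ${\cal E} = (0,\infty)$ excludes both endpoints, so the hypotheses of \eqref{c0-lim-at-a} are vacuous and that condition holds trivially.

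It then remains to verify the two integral conditions at $b = \infty$. Using $m(x) = \sigma^{-2} x^{-2 - 2\mu/\sigma^2}$, the integrand of \eqref{c0-M-integrable} is $c_0(v) m(v) = \sigma^{-2}\bigl(k_3 v^{-1 - 2\mu/\sigma^2} + k_4 v^{\beta - 2 - 2\mu/\sigma^2}\bigr)$, and both exponents are strictly less than $-1$ (the second because $\beta < 0$), so the integral over $[y,\infty)$ converges and \eqref{c0-M-integrable} holds. For the non-integrability condition \eqref{infinite-dbl-intgrl-at-b} I would avoid integrating by hand and instead invoke the explicit formula $g_0(x) = \frac{k_3}{\mu}(x-1) - \frac{k_4}{\tilde\rho(\beta)}(x^{\beta} - 1)$ already computed for this model. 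By the defining relation \eqref{eq-g0-fn}, $\int_y^b \int_u^b c_0(v)\,\d M(v)\,\d S(u) = \frac{1}{2}\bigl[\lim_{x\to\infty} g_0(x) - g_0(y)\bigr]$; since $\beta < 0$ forces $x^{\beta} \to 0$ while the linear term $\frac{k_3}{\mu}x \to \infty$, we get $g_0(x) \to \infty$, and the double integral diverges. The only steps with any real content are these two integral conditions, and the main obstacle---the divergence in \eqref{infinite-dbl-intgrl-at-b}---is rendered routine by the fact that the dominating linear term of the precomputed $g_0$ makes $g_0(x) \to \infty$ transparent; everything else is a direct check.
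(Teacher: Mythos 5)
Your proposal is correct and follows essentially the same route as the paper, whose proof simply asserts that inf-compactness is obvious, that \eqref{c0-M-integrable} and \eqref{infinite-dbl-intgrl-at-b} follow from direct calculation, and that the conditions on $c_1$ are trivially satisfied; you have merely written out those direct calculations (the exponent count for \eqref{c0-M-integrable}, and the divergence of the precomputed $g_0$ for \eqref{infinite-dbl-intgrl-at-b}, which is legitimate since you establish \eqref{c0-M-integrable} first). The only cosmetic remark is that the paper does not explicitly note the vacuousness of \eqref{c0-lim-at-a} or the $H$-template for $c_1$, but both observations are consistent with the surrounding text and add clarity rather than deviation.
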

\begin{proof}
It is obvious that the function $c_{0}$ is inf-compact. Moreover, \eqref{c0-M-integrable} and \eqref{infinite-dbl-intgrl-at-b} can be verified using direct calculations. The conditions \eqref{decr-cost} and  \eqref{eq-c1-equal-displacement} for the function $c_{1} $ are trivially satisfied.
\end{proof}

With this ordering cost function $c_1$, the function $F$ to be minimized is
\begin{equation} \label{F-def}
F(y,z) = \frac{k_1 + k_2(z^\eta-y^\eta) + \frac{k_3}{\mu} (z-y) - \frac{k_4}{\tilde\rho(\beta)} (z^\beta - y^\beta)}{\frac{2}{2\mu+\sigma^2}(\ln(z)-\ln(y))}.
\end{equation}

\begin{lem} \label{lem-F-gbm2}
There is a unique minimizing pair $(y_*,z_*)$ of $F$ with $y_* < z_*$. 
\end{lem}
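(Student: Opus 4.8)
The plan is to combine the existence result already available from \propref{F-optimizers} with a one-dimensional shape analysis of the first-order optimality map, and then to obtain uniqueness through a Dinkelbach-type reformulation of the ratio minimization. First I would record that both boundaries $0$ and $\infty$ are natural, so \propref{F-optimizers} already guarantees an optimizing pair $(y_*,z_*)\in\mathcal R$ with $0<y_*<z_*<\infty$ satisfying the first-order conditions \eqref{eq-1st-op}. Writing $P(x):=k_2 x^\eta+g_0(x)$, so that $F(y,z)=\frac{k_1+P(z)-P(y)}{\zeta(z)-\zeta(y)}$, a direct computation shows that both equalities in \eqref{eq-1st-op} collapse to the single statement $\phi(y_*)=\phi(z_*)=F_*$, where
\begin{equation*}
\phi(x):=\frac{P'(x)}{\zeta'(x)}=\frac{2\mu+\sigma^2}{2}\Bigl(\tfrac{k_3}{\mu}x-\tfrac{k_4\beta}{\tilde\rho(\beta)}x^{\beta}+k_2\eta x^{\eta}\Bigr).
\end{equation*}
Thus the entire problem is governed by the single function $\phi$.

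Next I would establish that $\phi$ is strictly \emph{unimodal}: strictly decreasing on $(0,x_m)$ and strictly increasing on $(x_m,\infty)$ for a unique $x_m$, with $\phi(x)\to\infty$ both as $x\to0^+$ and as $x\to\infty$. The endpoint limits are immediate, since the $x^\beta$ term (with $\beta<0$ and coefficient $-\tfrac{k_4\beta}{\tilde\rho(\beta)}>0$) blows up at $0$ while the linear term dominates at $\infty$. For the monotonicity, the cleanest device is to observe that $\phi'(x)$ has the same sign as $q(x):=x^{1-\eta}\phi'(x)\cdot\tfrac{2}{2\mu+\sigma^2}$; using $\beta<0$, $0<\eta\le1$ and $\tilde\rho(\beta)>0$ one checks that $q'>0$ on $(0,\infty)$, while $q(0^+)=-\infty$ and $q(\infty)>0$, so $q$ has a single sign change. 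Since $\phi(y_*)=\phi(z_*)$ with $y_*\ne z_*$, unimodality forces $y_*<x_m<z_*$.

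The main obstacle is uniqueness of the pair: unimodality of $\phi$ only produces, for each level $\lambda>\phi(x_m)$, the two points $y(\lambda)<x_m<z(\lambda)$ solving $\phi=\lambda$, and does not by itself single out the self-consistent pair. To finish I would pass to the penalized value function $m(\lambda):=\inf_{0<y<z}\bigl[k_1+P(z)-P(y)-\lambda(\zeta(z)-\zeta(y))\bigr]$. Setting $\Psi(x;\lambda):=P(x)-\lambda\zeta(x)$, one has $\Psi'=\zeta'(\phi-\lambda)$, so for $\lambda>\phi(x_m)$ the function $\Psi(\cdot;\lambda)$ is increasing–decreasing–increasing with a local maximum at $y(\lambda)$ and a local minimum at $z(\lambda)$, together with $\Psi\to-\infty$ at $0^+$ and $\Psi\to+\infty$ at $\infty$; hence the infimum defining $m(\lambda)$ is attained \emph{only} at $(y(\lambda),z(\lambda))$. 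Because each bracketed expression is affine and strictly decreasing in $\lambda$ (its slope $-(\zeta(z)-\zeta(y))<0$), $m$ is continuous and strictly decreasing, with $m(\lambda)\to k_1>0$ as $\lambda\downarrow\phi(x_m)$ and $m(\lambda)\to-\infty$ as $\lambda\to\infty$; therefore $m$ has a unique root $\lambda^\ast$. The standard Dinkelbach equivalence then identifies $F_*=\lambda^\ast$ and shows that $(y_*,z_*)=(y(\lambda^\ast),z(\lambda^\ast))$ is the unique minimizer, completing the proof. I expect the delicate points to be the sign analysis yielding unimodality of $\phi$ and the verification that the inner infimum in $m(\lambda)$ is attained only at the straddling pair.
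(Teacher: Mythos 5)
Your proposal is correct. The first half coincides with the paper's proof: existence comes from \propref{F-optimizers}, the first-order conditions \eqref{eq-1st-op} collapse to $\phi(y_*)=\phi(z_*)=F_*$ where your $\phi$ is exactly $(\mu+\frac{\sigma^2}{2})h$ for the paper's function $h$ in \eqref{gbm2-h-def}, and your unimodality argument (multiplying $\phi'$ by $x^{1-\eta}$ to get a strictly increasing function with a single sign change) is the same device the paper uses with the factor $x^{1-\beta}$ in \eqref{gbm2-h-prime}. Where you diverge is the uniqueness step. The paper stops after unimodality: since $F_*$ is a single, well-defined number and every superlevel set of $h$ above its minimum consists of exactly two points, \emph{any} minimizer must be the unique pair straddling $\widehat{x}$ at level $F_*/(\mu+\frac{\sigma^2}{2})$ --- so the ``main obstacle'' you identify is not actually an obstacle, and the self-consistency of the level is supplied for free by the uniqueness of the infimal value. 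Your Dinkelbach detour --- showing $m(\lambda)$ is strictly decreasing from $k_1$ to $-\infty$, that its unique root is $F_*$, and that the inner infimum is attained only at the straddling pair $(y(\lambda),z(\lambda))$ --- is a valid and complete substitute; it has the side benefit of independently re-verifying that the critical pair is the global minimizer (rather than borrowing that from \propref{F-optimizers}), at the cost of the extra attainment analysis for $\Psi(\cdot;\lambda)$, whose shape argument you do carry out correctly.
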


\begin{proof}
Since Conditions~\ref{diff-cnd} and \ref{cost-cnds} are satisfied, \propref{F-optimizers} establishes the existence of a minimizing pair $(y_*,z_*) \in {\cal R}$.  We therefore seek to show that the optimizing pair is unique.  The first-order optimality conditions lead immediately to the system
$$F_* = \mbox{$(\mu+\frac{\sigma^2}{2})[\frac{k_3}{\mu} y + k_2 \eta y^\eta + \frac{k_4(-\beta)}{\tilde{\rho}(\beta)} y^\beta]$} = \mbox{$(\mu+\frac{\sigma^2}{2})[\frac{k_3}{\mu} z + k_2 \eta z^\eta + \frac{k_4(-\beta)}{\tilde{\rho}(\beta)} z^\beta]$}.$$
Define the function $h$ on $\I$ by 
\begin{equation} \label{gbm2-h-def}
h(x) = \mbox{$\frac{k_3}{\mu} x + k_2 \eta x^\eta + \frac{k_4(-\beta)}{\tilde{\rho}(\beta)} x^\beta$}.
\end{equation}  
Then  
\begin{eqnarray} \label{gbm2-h-prime} \nonumber 
h'(x) &=& \mbox{$\frac{k_3}{\mu} + k_2 \eta^2 x^{\eta-1} - \frac{k_4 \beta^2}{\tilde{\rho}(\beta)} x^{\beta-1}$} \\
&=& \left[\mbox{$\frac{k_3}{\mu} x^{1-\beta} + k_2 \eta^2 x^{\eta-\beta} - \frac{k_4 \beta^2}{\tilde{\rho}(\beta)}$}\right] x^{\beta-1} 
\end{eqnarray}
so $h'$ is negative on the interval $(0,\widehat{x})$ and positive on $(\widehat{x},\infty)$, where $\widehat{x}$ is the root of $h'$ which is also the root of the coefficient of $x^{\beta-1}$ in the last expression of \eqref{gbm2-h-prime}.  Thus each level set of $h$ above its minimal value consists of two points, establishing that the minimizing pair $(y_*,z_*)$ is unique.
\end{proof}

\begin{lem}\label{lem-G-qvi-gbm2} 
Let $(y_{*},z_{*}) \in \R$ be as in Lemma \ref{lem-F-gbm2}.  With reference to \eqref{eq-G-fn}, define 
\setlength{\arraycolsep}{0.5mm}
$$G(x):= \begin{cases} 
\begin{array}{rcl} 
-k_2 x^\eta + k_{1} + k_{2} z_*^\eta +  \mbox{$\frac{k_3}{\mu} (z_* -1)$} &-& \mbox{$\frac{k_4}{\tilde\rho(\beta)} (z_*^\beta -1)$} \\
&-& F_{*} \mbox{$\frac{2}{2\mu+\sigma^2}$} \ln(z_*)
\end{array}   & \text{ if } 0< x \le y_{*}, \\
 \mbox{$\frac{k_3}{\mu} (x -1) - \frac{k_4}{\tilde\rho(\beta)} (x^\beta -1)$} - F_{*}\mbox{$\frac{2}{2\mu+\sigma^2}$} \ln(x)   & \text{ if  } x > y_{*}. 
\end{cases}$$ 
Then $G$ satisfies the hypotheses of \propref{prop-G-qvi}. 
\end{lem}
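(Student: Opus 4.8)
The plan is to apply \propref{prop-G-qvi}, so it suffices to check that its three hypotheses---Conditions \ref{diff-cnd}, \ref{cost-cnds} and \cndref{eq-AG-c0}---hold for this model. The first two are already in hand: \cndref{diff-cnd} was verified when the scale and speed measures were computed, and \cndref{cost-cnds} is \lemref{lem-cnd2-gbm2}. Since $a=0$ is a natural boundary, the minimizer satisfies $y_*\in\I=(0,\infty)$, so the ``$y_*>a$'' case of \cndref{eq-AG-c0} is the relevant one. The smoothness requirements there are immediate from $c_1(y,z)=k_1+k_2(z^\eta-y^\eta)$, which is $C^1$ on ${\cal R}$ and has $c_1(\cdot,z_*)\in C^2(\I)$. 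Thus everything reduces to showing that $AG+c_0$ is decreasing on $(0,y_*)$.

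First I would compute $AG+c_0$ on $(0,y_*)$. There $G(x)=-k_2x^\eta+\text{(const)}$, and since $Ax^\alpha=\rho(\alpha)x^\alpha$ with $\rho(\alpha):=\frac{\sigma^2}{2}\alpha^2-(\mu+\frac{\sigma^2}{2})\alpha$ (so that $\rho(\beta)=\tilde\rho(\beta)$), one gets $AG(x)+c_0(x)=-k_2\rho(\eta)x^\eta+k_3x+k_4x^\beta$. Differentiating, set $\phi(x):=\frac{d}{dx}(AG+c_0)(x)=-k_2\eta\rho(\eta)x^{\eta-1}+k_3+k_4\beta x^{\beta-1}$, whose leading coefficient is $-k_2\eta\rho(\eta)=k_2\eta^2[\mu+\frac{\sigma^2}{2}(1-\eta)]>0$ while $k_4\beta<0$. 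I would then record that $\phi$ is negative-then-positive with a single sign change: multiplying by $x^{1-\beta}>0$ gives $x^{1-\beta}\phi(x)=-k_2\eta\rho(\eta)x^{\eta-\beta}+k_3x^{1-\beta}+k_4\beta$, which is strictly increasing (both exponents $\eta-\beta$ and $1-\beta$ are positive) and tends to $k_4\beta<0$ as $x\downarrow0$; hence $\phi$ has a unique zero and is $\le 0$ to its left. Consequently it is enough to prove $\phi(y_*)\le 0$.

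The main step---and the one requiring care with signs---is to deduce $\phi(y_*)\le0$ from the optimization. \lemref{lem-F-gbm2} gives $y_*<\widehat x$, where $\widehat x$ is the zero of $h'$ for the function $h$ of \eqref{gbm2-h-def} and $h'<0$ on $(0,\widehat x)$; thus $h'(y_*)=\frac{k_3}{\mu}+k_2\eta^2y_*^{\eta-1}-\frac{k_4\beta^2}{\rho(\beta)}y_*^{\beta-1}<0$. I would rewrite this as an upper bound on $k_4\beta y_*^{\beta-1}=-k_4(-\beta)y_*^{\beta-1}$, namely $k_4\beta y_*^{\beta-1}<-\frac{\rho(\beta)}{-\beta}\bigl[\frac{k_3}{\mu}+k_2\eta^2y_*^{\eta-1}\bigr]$, and substitute it into $\phi(y_*)$. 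Using the identities $\frac{\rho(\beta)}{-\beta}=\mu+\frac{\sigma^2}{2}(1-\beta)>\mu$ and $[\mu+\frac{\sigma^2}{2}(1-\eta)]-\frac{\rho(\beta)}{-\beta}=\frac{\sigma^2}{2}(\beta-\eta)<0$, both resulting terms are negative, giving $\phi(y_*)<0$. This yields $\phi<0$ on $(0,y_*]$, so $AG+c_0$ is strictly decreasing on $(0,y_*)$, verifying \cndref{eq-AG-c0}; \propref{prop-G-qvi} then applies and $G$ satisfies \eqref{eq-G-qvis}. I expect the sign-bookkeeping in this last reduction---tracking the signs of $\rho(\eta)$, $\rho(\beta)$, and the differences of the $\mu+\frac{\sigma^2}{2}(1-\cdot)$ terms---to be the only genuinely delicate part.
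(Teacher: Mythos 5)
Your proposal is correct and follows essentially the same route as the paper: both reduce the verification of \cndref{eq-AG-c0} to showing that the derivative of $AG+c_0=k_3x+k_2\eta^2\bigl[\mu+\frac{\sigma^2}{2}(1-\eta)\bigr]\eta^{-1}\eta\,x^\eta+k_4x^\beta$ (equivalently $-k_2\rho(\eta)x^\eta+k_3x+k_4x^\beta$) changes sign exactly once, from negative to positive, and then use the location of $y_*$ relative to the minimizer $\widehat x$ of $h$ to conclude the sign at $y_*$ is negative. The only difference is cosmetic: the paper compares the two roots $\widehat x$ and $\tilde x$ algebraically, whereas you substitute the inequality $h'(y_*)<0$ directly into $\phi(y_*)$ — a slightly more direct execution of the same final step, resting on the same identity $\tilde\rho(\beta)/(-\beta)=\mu+\frac{\sigma^2}{2}(1-\beta)$ and the sign of $\frac{\sigma^2}{2}(\beta-\eta)$.
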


\begin{rem}
As a result of \lemref{lem-G-qvi-gbm2}, $G \in C^{1}(\I)\cap C^{2}(\I - \{ y_{*}\})$ and satisfies the system \eqref{eq-G-qvis}.  Hence in light of \remref{rem-qvi}, $G$ and $F_*$ satisfy the QVI \eqref{qvi} for the geometric Brownian motion inventory model.
\end{rem}

\begin{proof}
By the comments at the beginning of this section, \cndref{diff-cnd} is satisfied and \lemref{lem-cnd2-gbm2} establishes that \cndref{cost-cnds} is satisfied.  It therefore remains to show that the function $x \mapsto AG(x) + c_0(x)$ is decreasing on the interval $(0,y_*)$.  For simplicity, define the function $\tilde{h} = AG + c_0$.

On the set $(0,y_*)$, the function $G$ is $G(x) = -k_2 x^\eta + k_1 + k_2 z_*^\eta + g_0(z_*) - F_* \zeta(z_*)$.  Thus on the set $(0,y_*)$, 
\begin{eqnarray*}
\tilde{h}(x) &=& \mbox{$\frac{\sigma^2}{2} x^2 G''(x) - \mu x G'(x) + c_0(x)$} \\
&=& k_3 x + k_2 \mbox{$\left(\mu \eta + \frac{\sigma^2 \eta(1-\eta)}{2}\right) x^\eta + k_4 x^\beta$}
\end{eqnarray*}
and hence
\begin{eqnarray} \label{gbm2-tilde-h-prime} \nonumber
\tilde{h}'(x) &=& k_3 + k_2 \mbox{$\left(\mu \eta^2 + \frac{\sigma^2 \eta^2(1-\eta)}{2}\right) x^{\eta-1} - k_4 (-\beta) x^{\beta-1}$} \\
&=& \left[\mbox{$k_3 x^{1-\beta} + k_2\left(\mu \eta^2 + \frac{\sigma^2 \eta^2(1-\eta)}{2}\right) x^{\eta-\beta} - k_4(-\beta)$}\right] x^{\beta-1}.
\end{eqnarray}
Observe that $\tilde{h}'$ strictly increases from $\tilde{h}'(0+)=-\infty$ to $\tilde{h}'(\infty)=k_3 > 0$.  Let $\tilde{x}$ denote the  root of $\tilde{h}'$, which is also the root of the coefficient of $x^{\beta-1}$ in the last expression of \eqref{gbm2-tilde-h-prime}. Then $\tilde{h} = AG + c_0$ is strictly decreasing on the interval $(0,\tilde{x})$ and is strictly increasing on $(\tilde{x},\infty)$.

By the definitions of $\widehat{x}$ and $\tilde{x}$ as the roots of $h'$ and $\tilde{h}'$, respectively, or more precisely as the roots of the coefficients of $x^{\beta-1}$ in \eqref{gbm2-h-prime} and \eqref{gbm2-tilde-h-prime}, respectively, we have
$$\mbox{$\frac{k_3}{\mu} \widehat{x}^{1-\beta} + k_2 \eta^2 x^{\eta-\beta}  - \frac{k_4 \beta^2}{\tilde{\rho}(\beta)}$} = 0 = \mbox{$k_3 \tilde{x}^{1-\beta} + k_2\left(\mu \eta^2 + \frac{\sigma^2 \eta^2(1-\eta)}{2}\right) \tilde{x}^{\eta-\beta} - k_4(-\beta)$}.$$
Multiplying the left expression (for $0$) by $\mu$ and rearranging the terms yields
\begin{eqnarray*}
\mbox{$k_3\left(\tilde{x}^{1-\beta} - \widehat{x}^{1-\beta}\right) + k_2\mu \eta^2\left(\tilde{x}^{\eta-\beta} - \widehat{x}^{\/\eta-\beta}\right) + \frac{k_2 \sigma^2 \eta^2(1-\eta)}{2} \tilde{x}^{\eta-\beta}$} &=& \mbox{$k_4 (-\beta) - \frac{k_4 \beta^2\mu}{\tilde{\rho}(\beta)}$} \\
&=& \mbox{$\frac{k_4 \sigma^2 \beta^2(1-\beta)}{2\tilde{\rho}(\beta)}$}
\end{eqnarray*}
or, equivalently, 
\begin{equation} \label{eq5} \begin{array}{r}
k_3\left(\tilde{x}^{1-\beta} - \widehat{x}^{1-\beta}\right) + \left(k_2\mu \eta^2 + \frac{k_2 \sigma^2 \eta^2(1-\eta)}{2}\right)\left(\tilde{x}^{\eta-\beta} - \widehat{x}^{\/\eta-\beta}\right) \\
= \frac{k_4 \sigma^2 \beta^2(1-\beta)}{2\tilde{\rho}(\beta)} - \frac{k_2 \sigma^2 \eta^2(1-\eta)}{2} \widehat{x}^{\eta-\beta}.
\end{array}
\end{equation}
Observe that since $\widehat{x}$ is a root of the coefficient of $x^{\beta-1}$ in \eqref{gbm2-h-prime}, 
$$\mbox{$- k_2 \eta^2 \widehat{x}^{\eta-\beta} = \frac{k_3}{\mu} \widehat{x}^{1-\beta} - \frac{k_4 \beta^2}{\tilde{\rho}(\beta)}$}$$
so making this substitution in \eqref{eq5} yields
$$\begin{array}{rcl}
k_3\left(\tilde{x}^{1-\beta} - \widehat{x}^{1-\beta}\right) &+& (k_2\mu \eta^2 + \frac{k_2 \sigma^2 \eta^2(1-\eta)}{2})\left(\tilde{x}^{\eta-\beta} - \widehat{x}^{\/\eta-\beta}\right) \rule[-10pt]{0pt}{10pt} \\
&=& \frac{k_4 \sigma^2 \beta^2(1-\beta)}{2\tilde{\rho}(\beta)} - \frac{k_4 \sigma^2 \beta^2(1-\eta)}{2\tilde{\rho}(\beta)} + \frac{k_3 \sigma^2 (1-\eta)}{2\mu}\widehat{x}^{\eta-\beta} \rule[-10pt]{0pt}{10pt} \\
&=& \frac{k_4 \sigma^2 \beta^2(\eta-\beta)}{2\tilde{\rho}(\beta)} + \frac{k_3 \sigma^2 (1-\eta)}{2\mu}\widehat{x}^{\eta-\beta} > 0.
\end{array}$$
Hence $\tilde{x} > \widehat{x}$.  Since $\widehat{x}$ is the minimizer of $h$ in \eqref{gbm2-h-def}, it follows that $y_* < \widehat{x} < \tilde{x}$ and therefore $h=AG+c_0$ is decreasing on $(0,y_*)$.
\end{proof}

For the geometric Brownian motion inventory model, it is again sufficient to restrict attention to the class ${\cal A}_2$ in \defref{A2-def}.

\begin{prop}  \label{prop-bdd-right-locations-2}
Let $(\tau,Y) \in {\cal A}_2$ with $J(\tau,Y) < \infty$.  For each $n\in \N$, define the stopping time $\beta_n = \inf\{t\geq 0: X(t) \geq n\}$.  Then
\begin{equation}
\label{eq-trans-G-gbm}
 \liminf_{t\rightarrow \infty} \lim_{n\rightarrow \infty} \mbox{$\frac{1}{t}$} \E_{x_0}\left[  G(X(t\wedge\beta_n))\right] = 0
\end{equation}
and hence ${\cal A}_2 \subset {\cal A}_1$.  Consequently we have $F_{*} \le J(\tau, Y)$.  Moreover, with $(y_*,z_*)$ being the minimizing pair of \lemref{lem-F-gbm2}, the policy $(\tau^*,Y^*)$ defined in \eqref{sS-tau-def} is optimal in the class ${\cal A}_2$.
\end{prop}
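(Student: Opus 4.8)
The plan is to verify that an arbitrary $(\tau,Y)\in{\cal A}_2$ with $J(\tau,Y)<\infty$ lies in ${\cal A}_1$ by checking the three conditions of \defref{A1-def}; once this is done, $F_*\le J(\tau,Y)$ is immediate from \thmref{thm-verification}, and optimality of the $(y_*,z_*)$ policy within ${\cal A}_2$ follows because that policy is itself in ${\cal A}_2$ and, by \propref{prop-sS-cost} together with \lemref{lem-F-gbm2}, attains $J(\tau^*,Y^*)=F(y_*,z_*)=F_*$. Since $0$ is an attracting \emph{natural} (hence non-reflective) boundary, condition (iii) of \defref{A1-def} is vacuous, and since $\infty$ is non-attracting we have $\beta_n\to\infty$ $(a.s.)$, so the increasing stopping times $\beta_n=\inf\{t\ge 0:X(t)\ge n\}$ are admissible for (i). For the localized martingale property, note that for $s\le\beta_n$ the process stays in $(0,n]$, where $G'(x)\sigma(x)$ is bounded: on $(0,y_*]$ it equals $-k_2\eta\sigma x^{\eta}$, which vanishes as $x\searrow 0$, while on $[y_*,n]$ it is continuous because $G\in C^1(\I)$. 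Hence $\widehat M(\cdot\wedge\beta_n)$ is a square-integrable martingale and (i) holds.

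The substance is the transversality condition \eqref{eq-trans-G-gbm}. First I would record two pathwise bounds on $G$. From \remref{rem-about-G}, $G\ge -\widehat\kappa$. For the matching upper bound I would use the explicit form of $G$ in \lemref{lem-G-qvi-gbm2}: for $x\le y_*$ one has $G(x)=-k_2 x^\eta+\text{const}\le\text{const}$, and for $x>y_*$ the terms $-\frac{k_4}{\tilde\rho(\beta)}x^\beta$ and $-F_*\frac{2}{2\mu+\sigma^2}\ln x$ are bounded above on $(y_*,\infty)$, so $G(x)\le\frac{k_3}{\mu}x+C_2$; since $c_0(x)=k_3 x+k_4 x^\beta\ge k_3 x$, this yields the relative bound $G(x)\le\frac{1}{\mu}c_0(x)+C_2$ for all $x\in\I$.

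The key step is to interchange the limit in $n$ with the expectation, that is, to show $\lim_{n\to\infty}\E_{x_0}[G(X(t\wedge\beta_n))]=\E_{x_0}[G(X(t))]$ for each fixed $t$. Because $X(t\wedge\beta_n)\to X(t)$ $(a.s.)$ and $G$ is continuous, this follows from dominated convergence once an integrable, $n$-independent envelope for $G(X(t\wedge\beta_n))$ is produced; the lower bound $-\widehat\kappa$ handles one side. For the other side I would dominate $c_0(X(t\wedge\beta_n))$ using the multiplicative structure of the geometric Brownian motion. Since every post-order level is bounded by some $K$ and between orders $X$ evolves as a geometric Brownian motion driven by $W$, a pathwise estimate gives $X(s)\le\max(x_0,K)\exp(2\sigma\sup_{u\le t}|W(u)|)=:\Xi_t$ for all $s\le t$, with $\E[\Xi_t]<\infty$ by the Gaussian tail of $\sup_{u\le t}|W(u)|$, which controls the linear term $k_3 X$. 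Because upward orders only raise the level, $X(s)\ge X_0(s)$, where $X_0$ is the uncontrolled geometric Brownian motion from $x_0$; as $\beta<0$ this gives $X(s)^\beta\le(\inf_{u\le t}X_0(u))^\beta=:\Upsilon_t$ with $\E[\Upsilon_t]<\infty$, controlling the term $k_4 X^\beta$. Thus $c_0(X(t\wedge\beta_n))\le k_3\Xi_t+k_4\Upsilon_t$, and combined with $G\le\frac{1}{\mu}c_0+C_2$ the dominated convergence theorem applies.

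Having reduced \eqref{eq-trans-G-gbm} to $\liminf_{t\to\infty}t^{-1}\E_{x_0}[G(X(t))]=0$, the remaining argument mirrors \propref{add-pol}. The bound $G\ge-\widehat\kappa$ gives $\liminf_{t\to\infty}t^{-1}\E[G(X(t))]\ge 0$. For the reverse inequality I would argue by contradiction: if $\liminf_{t\to\infty}t^{-1}\E[G(X(t))]\ge 2\delta>0$, then $\E[G(X(s))]\ge\delta s$ for all large $s$, and the inequality $c_0\ge\mu G-\mu C_2$ forces $t^{-1}\E[\int_0^t c_0(X(s))\d s]\to\infty$, contradicting $J(\tau,Y)<\infty$. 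Hence $\liminf_{t\to\infty}t^{-1}\E[G(X(t))]=0$, so \eqref{eq-trans-G-gbm} holds and $(\tau,Y)\in{\cal A}_1$, and the asserted optimality follows as in the first paragraph. The main obstacle is precisely the limit interchange of the third paragraph: $c_0$, and hence $G$, blows up both at the attracting boundary $0$ (through $k_4 x^\beta$) and at $\infty$ (through $k_3 x$), and the geometric Brownian motion has only power-law tails, so the two-sided domination — below by the uncontrolled process $X_0$ to tame the singularity at $0$, above by the exponential-of-Brownian envelope $\Xi_t$ to tame the growth at $\infty$ — is exactly what legitimizes dominated convergence.
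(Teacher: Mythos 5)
Your proof is correct, and its skeleton matches the paper's: localize $\widehat{M}$ with the same stopping times $\beta_n$, establish $\lim_{n\to\infty}\E_{x_0}[G(X(t\wedge\beta_n))]=\E_{x_0}[G(X(t))]$, bound $G$ above by an affine function of $c_0$, and finish with the contradiction argument of \propref{add-pol}. Where you genuinely diverge is at the technical heart of the proof, the interchange of the $n$-limit with the expectation. The paper splits $\E_{x_0}[G(X(t\wedge\beta_n))+\widehat{\kappa}]$ over $\{t\le\beta_n\}$ and $\{t>\beta_n\}$, handles the first piece by monotone convergence, and kills the second by an optional-sampling computation with the scale function $S(x)=\frac{\sigma^2}{2\mu+\sigma^2}x^{1+2\mu/\sigma^2}$: since $AS=0$ and post-order levels are bounded, $\P_{x_0}\{t>\beta_n\}\le (S(x_0)+K_3t)/S(n)$, and because $S(n)\sim n^{1+2\mu/\sigma^2}$ dominates the linear growth $G(x)+\widehat{\kappa}\le K_4+K_5x$, the term $(K_4+K_5n)\P_{x_0}\{t>\beta_n\}$ vanishes. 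You instead construct a single integrable envelope independent of $n$ --- the pathwise bound $X(s)\le \max(x_0,K)\exp(2\sigma\sup_{u\le t}|W(u)|)=\Xi_t$ coming from the multiplicative structure of the geometric Brownian motion and the uniform cap on post-order levels --- and apply dominated convergence. Both are valid; yours is more elementary and avoids both the scale-function computation and the bound on the expected number of orders (which the paper extracts from $J(\tau,Y)<\infty$ at this stage), while the paper's argument is the one that survives in models where no exponential-of-Brownian envelope is available. One minor economy: the comparison $X\ge X_0$ and the variable $\Upsilon_t$ are superfluous, since the coefficient of $x^\beta$ in $G$ on $(y_*,\infty)$ is negative and so the only unbounded contribution to the upper bound on $G$ is the linear term; the envelope $\frac{k_3}{\mu}\Xi_t+C_2$ together with the lower bound $-\widehat{\kappa}$ already justifies the dominated convergence. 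Your observation that condition (iii) of \defref{A1-def} is vacuous because $0$ is a natural boundary is correct and is used implicitly by the paper as well.
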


\begin{proof}
Let $(\tau,Y) \in {\cal A}_2$, let $\beta_n$ be as in the statement of the proposition and assume that $K_1$ is a uniform bound on the post-order inventories.  Define $\widehat{M}(t) : = \int_{0}^{t} G'(X(s))\sigma X(s) \d W(s)$. Observe that 
\begin{eqnarray*}
\lefteqn{\E\left[\int_0^{t\wedge \beta_n} (\sigma X(s) G'(X(s))^2 I_{\{X(s) \in (0,y_*)\}}\, \d s\right]} \\ 
&=& \E\left[\int_0^{t\wedge \beta_n} k_2^2 \sigma^2 \eta^2 (X(s))^{2\eta} I_{\{X(s) \in (0,y_*)\}}\, \d s\right] \leq k_2^2 \sigma^2 \eta^2 t \cdot (y_*)^{2\eta}. 
\end{eqnarray*}
The localizations satisfy $X(s)I_{\{X(s) > y_*\}} \leq n$ for all $s \leq \beta_n$ and hence $\{\widehat{M}(t\wedge\beta_n): t \geq 0\}$ is a martingale for each $n$.

We now show that \eqref{eq-trans-G-gbm} holds.  Recall from \eqref{eq-G-bdd-above} that $G(x)\ge -\widehat{\kappa}$ for all $x\in \I$ so 
\begin{equation} \label{eq-G-mean-2parts}
\begin{array}{rcl}
 \E_{x_0}\!\left[G(X(t\wedge\beta_n)) + \widehat{\kappa}\right] &=& \E_{x_0}\!\left[(G(X(t)) + \widehat{\kappa}) I_{\{t\leq \beta_n\}}\right] \rule[-10pt]{0pt}{10pt} \\
& & +\; \E_{x_0}\!\left[ (G(X(n)) + \widehat{\kappa}) I_{\{t >\beta_n\}}\right].
\end{array}
\end{equation}
Observe that the first summand on the right-hand side of \eqref{eq-G-mean-2parts} is monotone increasing in $n$ so it follows from the fact that $\beta_n \rightarrow \infty$   a.s.  that
\begin{equation}
\label{eq1-G-gbm}
\lim_{n\rightarrow \infty}\E_{x_0}\left[ (G(X(t)) + \widehat{\kappa}) I_{\{t\leq \beta_n\}}\right] = \E_{x_0}\left[G(X(t)) + \widehat{\kappa}\right].
\end{equation}

We now determine the asymptotic behavior of the second summand on the right-hand side of \eqref{eq-G-mean-2parts}.
Applying It\^{o}'s formula along with the optional sampling theorem using the  scale function $S(x) : = \int_{1}^{x} s(y) \d y = \frac{\sigma^{2}}{2\mu + \sigma^{2}} x^{1+ 2\mu/\sigma^{2}}$, we obtain for each $t > 0$
$$\begin{aligned}
S(X(t\wedge \beta_n)) =  S(x_0)&  + \int_0^{t\wedge \beta_n}  AS(X(s)) \d s
+ \sum_{k=1}^\infty   I_{\{\tau_k \leq t\wedge\beta_n\}} BS(X(\tau_k-),X(\tau_k)) \\ &
+ \widehat{M}(t\wedge\beta_{n}).
\end{aligned}$$
The regular integral vanishes because $AS(x) = 0$, and since $\beta_n$ is a localizing time, taking expectations eliminates the stochastic integral.  Hence
\begin{equation} \label{eq-S-mean-two-parts}
\E_{x_0}\left[S (X(t\wedge \beta_n))\right] = S(x_{0}) +  \E_{x_0}\left[\sum_{k=1}^\infty   I_{\{\tau_k \leq t\wedge\beta_n\}} BS(X(\tau_k-),X(\tau_k))\right].
\end{equation}
Since $X(\tau_k) \leq K_1$ for every $k$, and observe $X(\tau_{k}-) > 0$, it follows that 
$$BS(X(\tau_k-),X(\tau_k)) \leq  \mbox{$\frac{\sigma^{2}}{2\mu + \sigma^{2}}$} K_{1}^{1+ 2\mu/\sigma^{2}}=: K_{2} < \infty.$$   
This estimate provides the bound
\begin{eqnarray} \label{eq-BS-estimate-gbm}\nonumber
\E_{x_0}\Biggl[\sum_{k=1}^\infty  I_{\{\tau_k \leq t\wedge\beta_n\}} BS(X(\tau_k-),X(\tau_k))\Biggr]
&\leq&  K_2\E_{x_0}\Biggl[\sum_{k=1}^\infty   I_{\{\tau_k \le  t\}} \Biggr] \\
&=&  K_{2} \sum_{k=1}^\infty  \E_{x_{0}}[I_{\{\tau_k \le  t\}}].
\end{eqnarray}
Since $(\tau,Y)$ induces a finite cost, noting the facts that $c_{0} \ge 0$ and $c_{1} \ge k_{1} > 0$, we have 
\begin{eqnarray*}
\limsup_{t\to\infty} \mbox{$\frac{1}{t}$} \E_{x_{0}} \Biggl[\sum_{k=1}^{\infty} I_{\{\tau_{k} \le t\}} k_{1}  \Biggr] &\leq& \limsup_{t\to\infty} \mbox{$\frac{1}{t}$} \E_{x_{0}} \Biggl[\sum_{k=1}^{\infty} I_{\{\tau_{k} \le t\}} c_{1}(X(\tau_{k}-),X(\tau_{k})) \Biggr] \\
&\leq& J(\tau, Y) < \infty.
\end{eqnarray*}
Therefore there exists some $T > 0$ such that the right-hand side of \eqref{eq-BS-estimate-gbm} is bounded  by $K_3 t$ for all $t \geq T$, in which $K_{3}$ is independent of $t$ and $n$.  On the other hand, since $S$ is positive, we have $S(n) \E_{x_{0}}[I_{\{t>  \beta_{n}  \}}] \le \E_{x_0}[S (X(t\wedge \beta_n))].$  Putting these estimates into \eqref{eq-S-mean-two-parts}, we obtain
$$ \P_{x_{0}}\set{t > \beta_{n}} \le \frac{S(x_{0}) + K_{3} t}{S(n)}.$$

  Observe that there exist positive constants $K_4$ and $K_5$ such that 
\begin{equation} \label{G-bd-gbm}
0 \le G(x) + \widehat{\kappa} \leq K_4 + K_5 x
\end{equation} 
for all $x \in(0,\infty)$.  As a result,
\begin{displaymath}
\E_{x_0}\big[ (G(X(\beta_{n})) + \widehat{\kappa}) I_{\{t >\beta_n\}}\big] \le (K_{4} + K_{5}n) \P_{x_{0}}\set{t > \beta_{n}} \le  (K_{4} + K_{5}n) \frac{S(x_{0}) + K_{3} t}{S(n)} \to 0,
\end{displaymath}
 as $n \rightarrow \infty$.  This, together with \eqref{eq-G-mean-2parts} and \eqref{eq1-G-gbm}, implies that 
$$\lim_{n\rightarrow \infty} \E_{x_0}\left[G(X(t\wedge\beta_n)) + \widehat{\kappa}\right] = \E_{x_0}\left[G(X(t)) + \widehat{\kappa}
\right]$$  
and hence 
\begin{equation}
\label{eq-limit-G-n}
\lim_{n\to\infty}\E_{x_{0}}[G(X(t\wedge\beta_n))]= \E_{x_{0}}[G(X(t))].
\end{equation}

As a result of \eqref{eq-G-bdd-above}, we have $\liminf_{t\to\infty}\frac{1}{t}\E_{x_{0}}[G(X(t))] \ge 0$. Further observe that \eqref{G-bd-gbm} implies the existence of positive constants $C_1$ and $C_2$ such that $G(x) \leq C_1 c_0(x) + C_2$ for every $x \in (0,\infty)$ and hence $\E[G(X(t))] \leq C_1\E[c_0(X(t))] + C_2$ for all $t\geq 0$.  Following the argument in the proof of \propref{add-pol}, the transversality condition \eqref{eq-trans-G-gbm} is established and hence ${\cal A}_2 \subset {\cal A}_1$.

The hypotheses of \thmref{thm-verification} hold and therefore both the inequality $F_{*}\le J(\tau, Y)$ and the optimality of the $(\tau^*,Y^*)$ in the restricted class ${\cal A}_2$ of ordering policies is proven. 
 \end{proof}

As in the drifted Brownian motion model, a comparison result extends the optimality of the $(y_*,z_*)$-ordering policy to all ordering policies in ${\cal A}$.  In contrast with \propref{prop-comparison-dbm}, the structure of the geometric Brownian motion model allows a stronger comparison result, namely, that the cost of the modified policy is less than the cost of the original policy on a path-by-path basis.  The proof of this comparison is similar to that of Proposition 4.7 of \cite{HelmesSZ-14} which considers a discounted criterion but the differences are significant enough to warrant a complete exposition.  This proof is given in the appendix.

\begin{prop} \label{prop-comparison-gbm2}
Let $(\tau,Y)\in \mathcal A$ and let $X$ denote the resulting inventory process.  Then there exists an admissible ordering policy $(\theta,Z) \in {\cal A}_2$ with resulting inventory process $\tilde{X}$, such that for each $t \geq 0$,
\begin{equation} \label{cost-comparison-gbm2}
\begin{array}{l} \displaystyle
\int_0^t c_0(\tilde{X}(s))\, \d s + \sum_{j=1}^\infty c_1(\tilde{X}(\theta_j-),\tilde{X}(\theta_j)) I_{\{\theta_j \leq t\}} \\
\displaystyle \qquad \qquad \leq \int_0^t c_0(X(s))\, \d s + \sum_{k=1}^\infty c_1(X(\tau_k-),X(\tau_k)) I_{\{\tau_k \leq t\}}.
\end{array}
\end{equation}
Consequently, $J(\theta,Z) \leq J(\tau,Y)$.
\end{prop}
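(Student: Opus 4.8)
The plan is to construct the dominating policy $(\theta,Z)$ by capping every order-up-to level of $(\tau,Y)$ at a fixed threshold $\hat z$ and then to exploit the multiplicative flow of geometric Brownian motion, together with the additive structure \eqref{gbm2-c1-def} of $c_1$, to carry out a genuinely pathwise comparison. First I would fix $\hat z$ large: specifically $\hat z \ge z_*$ and $\hat z$ at least as large as the unique minimizer of $c_0(x)=k_3x+k_4x^\beta$, so that $c_0$ is increasing on $[\hat z,\infty)$ and ordering above $\hat z$ is intuitively wasteful. The modified policy reuses the order times $\{\tau_k\}$ of $(\tau,Y)$, which are stopping times of the driving filtration $\{{\cal F}_t\}$ and hence legitimate for a process $\tilde X$ driven by the same Brownian motion $W$, but replaces each target $X(\tau_k)$ by $X(\tau_k)\wedge \hat z$; it must also interpose additional ``recoupling'' orders that raise $\tilde X$ back toward $X$ so as to keep $\tilde X$ from lagging far below $X$ in the moderate range. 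One then checks $(\theta,Z)\in{\cal A}_2$ (every post-order level is at most $\hat z$) and admissibility of $(\theta,Z)$.

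Second I would show $\tilde X(t)\le X(t)$ for all $t\ge 0$. Since between orders both processes solve the same linear SDE driven by $W$, the flow is monotone and in fact multiplicative: on any inter-order interval $\tilde X(t)=\rho\,X(t)$ with the ratio $\rho\in(0,1]$ frozen at the start of the interval. Capping can only lower a post-order value and recoupling resets $\rho$ to $1$, so an induction over the merged, ordered list of all order times of both policies yields $0<\rho\le1$ throughout, hence $\tilde X\le X$.

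Third, and this is the heart of the argument, I would verify the pathwise inequality \eqref{cost-comparison-gbm2}. The ordering costs are treated using \eqref{gbm2-c1-def}: capping a target from $z$ to $\hat z$ saves $k_2(z^\eta-\hat z^\eta)\ge 0$ immediately, while a subsequent recoupling order costs at most $k_2\,X(\cdot-)^\eta(1-\rho^\eta)$, and the equal-displacement identity \eqref{eq-c1-equal-displacement} with the concavity of $x\mapsto x^\eta$ lets these contributions telescope across cycles into a net saving, provided the recoupling is arranged so that the total number of orders, and thus the total fixed cost, does not increase. The holding cost is the delicate term: on a divergence interval $c_0(\tilde X)-c_0(X)=k_3(\rho-1)X+k_4(\rho^\beta-1)X^\beta$, whose first summand is favorable since $\rho\le1$, but whose second summand is adverse because $\beta<0$ forces $\rho^\beta\ge1$, i.e. the lower, capped process pays a larger $k_4x^\beta$ penalty.

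The main obstacle is precisely this adverse $k_4x^\beta$ contribution. A plain cap-and-drift coupling does not suffice: once $\tilde X$ becomes a small multiple of $X$, it sits at a tiny absolute level, and hence an enormous $x^\beta$ cost, exactly when $X$ is only moderately large, so over a long excursion the bounded ordering savings cannot offset the holding penalty. Overcoming this requires refining the construction so that $\tilde X$ is never allowed to lag far below $X$ in the moderate range while the peaks are still capped, together with a quantitative choice of $\hat z$ guaranteeing that on each genuine divergence interval the $k_3$- and ordering-savings dominate the $k_4$-increase. Carrying out this pathwise bookkeeping, which is more intricate than the discounted argument of Proposition 4.7 of \cite{HelmesSZ-14}, is the substance of the proof and is naturally relegated to the appendix. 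Granting \eqref{cost-comparison-gbm2}, dividing by $t$ and taking $\limsup_{t\to\infty}$ then yields $J(\theta,Z)\le J(\tau,Y)$.
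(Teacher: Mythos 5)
Your proposal correctly isolates the central difficulty --- the adverse $k_4x^\beta$ term when the modified process lags multiplicatively below the original --- but it does not resolve it, and the resolution is the entire content of the proof. The paper's construction (Proposition \ref{single-order-comparison} in the appendix) does not cap post-order levels at the original order times and then recouple; instead it replaces each single oversized order by a finite sequence of $(y,z)$-type orders, where $y:=\left(\frac{(-\beta)k_4}{k_3}\right)^{\frac{1}{1-\beta}}$ is chosen to be the \emph{minimizer of $c_0$}. Each replacement order is triggered when the modified process falls to $y$ and raises it to $z\wedge X_k$, so by construction $y\le \tilde X_k(s)\le X_k(s)$ throughout, and since $c_0$ is increasing on $[y,\infty)$ the holding-cost comparison \eqref{holding-cost-comparison-gbm2} is immediate --- the modified process is simply never allowed to enter the region where $k_4x^\beta$ is large. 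This specific choice of the lower ordering level is the missing idea in your sketch; ``$\tilde X$ is never allowed to lag far below $X$ in the moderate range'' must be made precise as ``$\tilde X$ never goes below $\mathop{\mathrm{argmin}} c_0$,'' and your cap-at-$\hat z$-and-recouple scheme does not achieve this.

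A second, related error: you require that ``the total number of orders, and thus the total fixed cost, does not increase.'' In the paper's construction the modified policy places \emph{more} orders ($m_k$ of them, defined in \eqref{m-k-def-c}, in place of one), so the fixed cost strictly increases by $k_1(m_k-1)$ or $k_1m_k$. The comparison still goes through because the threshold $L$ is calibrated via the functions $\ell$, $\widehat\ell$, $r$ of \eqref{l-r-fns}: $r(m)$ grows geometrically in $m$ while $\ell(m)$ and $\widehat\ell(m)$ grow only linearly, so whenever the original post-order level exceeds $L^{1/\eta}$ the variable-cost saving $k_2\bigl(X_k^\eta(\tau_k)-z^\eta\bigr)$ dominates the accumulated extra fixed and proportional costs $[k_1+k_2(z^\eta-y^\eta)]m_k$ (see \eqref{X-tau-lb} and the analogous condition $\widehat\ell(m_k)\le X_k^\eta(\tau_k)$). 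Without this quantitative trade-off between geometric and linear growth, the ordering-cost half of \eqref{cost-comparison-gbm2} cannot be closed either. As written, your argument establishes the statement only conditionally on a refinement you do not supply.
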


It is now necessary to return to the original inventory control problem over the set ${\mathcal A}$ of admissible ordering policies.

\begin{thm}\label{thm-gbm}
Let $(y_*,z_*)$ be as in \lemref{lem-F-gbm2} and let $(\tau^*,Y^*)$ be defined by \eqref{sS-tau-def}.  Then $(\tau^*,Y^*)$ is optimal in the class ${\cal A}$.
\end{thm}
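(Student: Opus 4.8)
The plan is to combine the two preceding results—the verification/lower-bound statement of \propref{prop-bdd-right-locations-2} and the pathwise comparison of \propref{prop-comparison-gbm2}—to show that the constant $F_*$ is simultaneously a lower bound on $J$ over all of ${\cal A}$ and the cost actually achieved by the $(s,S)$ policy $(\tau^*,Y^*)$.

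First I would show that $F_* \le J(\tau,Y)$ for every $(\tau,Y) \in {\cal A}$. Fix an arbitrary admissible policy $(\tau,Y)$; if $J(\tau,Y) = \infty$ the inequality is trivial, so assume it is finite. By \propref{prop-comparison-gbm2} there is a policy $(\theta,Z) \in {\cal A}_2$ whose cost satisfies $J(\theta,Z) \le J(\tau,Y)$. Since \propref{prop-bdd-right-locations-2} establishes ${\cal A}_2 \subset {\cal A}_1$ together with the bound $F_* \le J(\theta,Z)$ for policies in ${\cal A}_2$, chaining the two inequalities yields $F_* \le J(\theta,Z) \le J(\tau,Y)$. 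As $(\tau,Y)$ was arbitrary, $F_*$ is a lower bound for the long-term average cost over the entire admissible class ${\cal A}$.

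Next I would verify that this bound is attained by $(\tau^*,Y^*)$. This policy is the $(s,S)$ policy of \eqref{sS-tau-def} with $(y,z) = (y_*,z_*)$, and since every order resets the inventory to the single fixed level $z_*$, its post-order locations are uniformly bounded, so it lies in ${\cal A}_2$. By \propref{prop-sS-cost}, its long-term average cost equals $F(y_*,z_*)$, which by the choice of $(y_*,z_*)$ in \lemref{lem-F-gbm2} equals $F_* = \inf\{F(y,z):(y,z)\in{\cal R}\}$. Hence $J(\tau^*,Y^*) = F_* \le J(\tau,Y)$ for every $(\tau,Y) \in {\cal A}$, which is exactly the asserted optimality of $(\tau^*,Y^*)$ within ${\cal A}$.

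I do not anticipate a substantive obstacle, since the genuine work has already been done in the two cited propositions—in particular the technical, appendix-bound pathwise comparison \propref{prop-comparison-gbm2}—and the theorem is essentially their formal synthesis. The only point requiring a moment's care is confirming that $(\tau^*,Y^*)$ indeed belongs to ${\cal A}_2$, i.e.\ that its post-order levels are uniformly bounded, which is immediate because each order returns the inventory to the single level $z_*$.
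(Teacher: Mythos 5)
Your argument is correct and is exactly the synthesis the paper intends: it proves the theorem by citing Propositions \ref{prop-bdd-right-locations-2} and \ref{prop-comparison-gbm2} directly, and your write-up simply spells out the chaining $F_* \le J(\theta,Z) \le J(\tau,Y)$ together with the attainment $J(\tau^*,Y^*)=F_*$ via \propref{prop-sS-cost}. No gaps.
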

\begin{proof}
This theorem follows from Propositions \ref{prop-bdd-right-locations-2} and \ref{prop-comparison-gbm2} directly.
\end{proof}
\medskip

To conclude we show that the optimality of an $(s,S)$ policy for the geometric Brownian motion inventory model critically depends on the parameters in the holding cost and ordering cost functions $c_{0}(x)= k_{3} x + k_{4} x^{\beta}$ and $c_{1}(y,z) = k_{1} + k_{2}(z^\eta-y^\eta)$, respectively.  In particular, we demonstrate that \cndref{cost-cnds}(a) is sufficient but not necessary for the existence of an optimal $(s,S)$ ordering policy.  In the ensuing subsections, we fix the constant $k_1>0$ and consider particular choices of $k_2$, $k_3$ and $k_4$. 

\subsubsection{Parameter variation: $k_4 =0$, $k_2, k_3 > 0$.} \label{case-1}
When $k_{4} =0$, small inventories are not penalized.  We can verify directly that 
\begin{displaymath}
\E_{x_{0}} \biggl[ \frac{1}{T} \int_{0}^{T} c_{0}(X_{0}(t)) \d t\biggr]  = \frac{1}{T} \int_{0}^{T}  k_{3} x_{0} e^{-\mu t} \d t = \frac{k_{3} x_{0}}{ \mu T} (1- e^{-\mu T}) \to 0 ,
\end{displaymath}
as $T \to \infty$, where $X_{0}$ is the uncontrolled inventory process with initial condition $x_{0} > 0$. This shows that when $k_{4} =0$, the ``no-order'' policy is optimal.  This result is not surprising since $0$ is an attracting boundary point so almost all paths converge to $0$ and each order simply delays this convergence at a higher cost in both ordering and holding costs.

For each $L > 0$, the set $\{x\in \I: c_0(x) \le L\} = (0, \frac{L}{k_3}]$ is not compact. Thus the inf-compactness assumption for the function $c_0$  in Condition \ref{cost-cnds} is violated when $k_4 =0$.  On the other hand,  Proposition \ref{sS-stationary} implies that the $(s, S)$-policy   with $s= y$ and $S= z$ has long-term expected order frequency $\frac{1}{\zeta(z) - \zeta(y)}$. Therefore, the proof of  Proposition \ref{prop-sS-cost} shows that the cost of the $(s,S)$-policy with $s = y$ and $S=z$ defined in \eqref{sS-tau-def} is
$$ F(y,z) = \frac{k_1 + k_2(z^\eta-y^\eta) + \frac{k_3}{\mu}(z-y)}{\frac{2}{2\mu + \sigma^{2}} (\ln z - \ln y)}.$$
Note that  for any $z$ fixed, we have $ \lim_{y \to  0} F(y,z)  = 0$.  But for any fixed pair $(y,z) \in {\mathcal R}$ with $0 < y <z $, we have $F(y,z) > 0$. Thus the function $F$ has no minimizing pair $(y_{*},z_{*}) \in \mathcal{R}$.

\subsubsection{Parameter variation: $k_{2} = k_{3}=0$, $k_4>0$. } \label{case-2}
When  $k_{2} = k_{3}=0$, as argued before, Condition \ref{diff-cnd} holds but not Condition \ref{cost-cnds}.  In addition,  the cost of the $(s,S)$-policy with $s = y$ and $S=z$  is given by
$$F(y,z) = \frac{k_1   -  \frac{k_{4}}{\tilde \rho(\beta)}(z^{\beta} - y^{\beta})}{\frac{2}{2\mu + \sigma^{2}} (\ln z - \ln y)}.$$
Since $F$ is positive and can be made arbitrarily small by taking $z$ sufficiently large with $y$ fixed.  But for any fixed pair $(y,z) \in {\mathcal R}$ with $0 < y <z $, we have $F(y,z) > 0$. Thus the function $F$ has no minimizing pair $(y_{*},z_{*}) \in \mathcal{R}$. This problem has no optimal inventory control policy.

\begin{rem}
These first two variations demonstrate that the function $F$ does not attain its infimal value of $0$ and no $(s,S)$ policy is optimal.  The next variation indicates that \cndref{diff-cnd} and \cndref{cost-cnds} are not necessary in that \cndref{cost-cnds}(a) fails but we are still able to prove existence of an optimal $(s,S)$ ordering policy.
\end{rem}

\subsubsection{Parameter variation: $k_{3} =0$, $k_2, k_4 > 0$.} \label{case-3}
Next, we consider the case when $k_{3}=0$ so only small inventories are penalized. Again, we immediately see that the function $c_{0}(x) = k_{4} x^{\beta}$ is not inf-compact and hence Condition \ref{cost-cnds} (a) does not hold. Moreover, straightforward calculations reveal that for each $y> 0$,
\begin{align}
\label{eq-gbm-c0-integral}& \int_{y}^{\infty} c_{0}(v) \d M(v) = \mbox{$\frac{k_{4}}{2\mu + \sigma^{2}-\beta \sigma^{2}}$}\, y^{\beta - 1 - 2\mu/\sigma^{2}} < \infty,       \\
    &   \int_{y}^{\infty} \int_{u}^{\infty} c_{0}(v) \d M(v) \d S(u) = \mbox{$\frac{k_{4}}{(2\mu + \sigma^{2}-\beta \sigma^{2}) (-\beta)} $}\, y^{\beta} < \infty.
\end{align}
Thus \eqref{c0-M-integrable} holds while \eqref{infinite-dbl-intgrl-at-b} is violated.  In addition, by Proposition \ref{prop-2.4-h-integral} and the calculations in the proof of \propref{cor-g0-psi-fns},  we have the following representation of the holding costs:
  \begin{equation}\label{eq-c0-integral-gbm-bad-exm}\begin{aligned}
\E_{z} & \biggl[ \int_{0}^{\tau_{y}} c_{0}(X_{0}(s)) \d s\biggr] \\
& =  2 \int_{y}^{z} [S(v) - S(y)] m(v) c_{0}(v) \d v + 2 [S(z) - S(y)] \int_{z}^{\infty} m(v) c_{0}(v) \d v \\
&= - \mbox{$\frac{k_{4}}{\tilde \rho(\beta)}$} (z^{\beta} - y^{\beta}).
\end{aligned}\end{equation}

\begin{prop}\label{prop-gbm-bad-exm}
Define the function $F$ on ${\cal R}$ by
\begin{equation}
\label{def-F-gbm-bad-exm}
F(y,z) : = \frac{k_{1}+ k_{2} (z^\eta-y^\eta) - \frac{k_{4}}{\tilde \rho(\beta )} (z^{\beta} - y^{\beta})}{ \frac{2}{2\mu + \sigma^{2}} (\ln z - \ln y)}.
\end{equation} 
Then there exists a unique pair $(y_{*}, z_{*}) \in \mathcal {R}$ such that $F$ attains its minimum value at $(y_{*}, z_{*})$.  Moreover, the $(s,S)$-policy defined in \eqref{sS-tau-def} with $s= y_{*}$ and $S= z_{*}$ is an optimal inventory control policy.
\end{prop}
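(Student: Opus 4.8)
The plan is to reprove, for this specific model, the three ingredients the general theory supplies under \cndref{cost-cnds}: existence and uniqueness of a minimizing pair of $F$, the fact that the associated $G$ solves the QVI system \eqref{eq-G-qvis}, and the verification inequality $F_*\le J(\tau,Y)$. None of \propref{F-optimizers}, \propref{prop-G-qvi}, or \thmref{thm-verification} can be quoted verbatim, because \cndref{cost-cnds}(a) fails here: $c_0(x)=k_4x^\beta$ is not inf-compact and \eqref{infinite-dbl-intgrl-at-b} is violated (indeed \eqref{eq-c0-integral-gbm-bad-exm} gives a finite double integral). The decisive structural consequence is that \remref{rem-about-G} breaks down: since $g_0$ is bounded near $\infty$ while $\zeta(x)=\frac{2}{2\mu+\sigma^2}\ln x\to\infty$, the function $G$ of \eqref{eq-G-fn} is no longer bounded below but instead satisfies $G(x)\to-\infty$ as $x\to\infty$; it is, however, bounded \emph{above} on $\I$. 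Recognizing this sign reversal is the main conceptual obstacle, and it simultaneously invalidates the general transversality argument and, once noticed, suggests a simpler global replacement.

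First I would establish existence. Since $F$ is continuous on $\mathcal R=\{0<y<z<\infty\}$, it suffices to show $\liminf F=\infty$ along every approach to $\partial\mathcal R$. On the diagonal $B\zeta(y,z)\to0$ while the numerator stays $\ge k_1$, so $F\to\infty$. For $y\downarrow0$ with $z$ fixed, the term $\frac{k_4}{\tilde\rho(\beta)}(y^\beta-z^\beta)$ dominates $-\ln y$; for $z\uparrow\infty$ with $y$ fixed the ordering term $k_2(z^\eta-y^\eta)$ dominates $\ln z$, and this is exactly the feature absent in \sectref{case-2} (where $k_2=k_3=0$ and no growth is available). At the corner $(0,0)$ I would use $F\ge\frac{g_0(z)-g_0(y)}{\zeta(z)-\zeta(y)}=g_0'(\xi)/\zeta'(\xi)$ for some $\xi\in(y,z)$ by Cauchy's mean value theorem, a positive multiple of $\xi^\beta\to\infty$; symmetrically, at $(\infty,\infty)$ the bound $F\ge\frac{k_2(z^\eta-y^\eta)}{\zeta(z)-\zeta(y)}$ is a positive multiple of $\xi^\eta\to\infty$; and at $(0,\infty)$ the numerator grows exponentially in $(-\ln y,\ln z)$ while the denominator grows only linearly. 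Hence the infimum is attained at an interior $(y_*,z_*)$. Uniqueness then follows exactly as in \lemref{lem-F-gbm2}: the first-order conditions give $h(y_*)=h(z_*)$ for $h(x)=k_2\eta x^\eta+\frac{k_4(-\beta)}{\tilde\rho(\beta)}x^\beta$ (the $k_3=0$ case of \eqref{gbm2-h-def}), and $h$ is strictly decreasing then strictly increasing, so its super-level sets are doubletons.

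Next I would define $G$ by \eqref{eq-G-fn} and verify \eqref{eq-G-qvis}. The proof of \propref{prop-G-qvi} uses \cndref{cost-cnds} only through part (b) (which holds here, as $c_1=k_1+k_2(z^\eta-y^\eta)$ satisfies \eqref{decr-cost} and \eqref{eq-c1-equal-displacement}) and through the interior first/second-order conditions feeding \propref{lem-h-fn}; all remain valid. The only model-specific computation is that $AG+c_0$ is decreasing on $(0,y_*)$, which is the root comparison of \lemref{lem-G-qvi-gbm2} specialized to $k_3=0$: writing $\widehat x$ for the minimizer of $h$ and $\tilde x$ for the root of $(AG+c_0)'$, one checks $y_*<\widehat x<\tilde x$. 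That the $(s,S)$-policy \eqref{sS-tau-def} has cost $F(y_*,z_*)=F_*$ follows from \propref{prop-sS-cost}, whose hypotheses survive because $M[y,\infty)<\infty$ and $\int_y^\infty c_0\,\d M<\infty$ hold directly for geometric Brownian motion (see \eqref{eq-gbm-c0-integral}, \eqref{eq-c0-integral-gbm-bad-exm}), so the stationary density and renewal-reward steps go through unchanged.

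Finally, the verification, where the boundedness above of $G$ pays off and lets me bypass the reduction to $\A_2$ and the comparison \propref{prop-comparison-gbm2}. Fix $(\tau,Y)\in\A$ with $J(\tau,Y)<\infty$ and set $\beta_n=\inf\{t:X(t)\ge n\}$; since $c_1\ge k_1$ and $J<\infty$ force finitely many orders on bounded intervals and the diffusion is non-explosive, $\beta_n\to\infty$ a.s. On $[0,t\wedge\beta_n]$ the integrand $G'(X)\sigma X$ is bounded (a bounded multiple of $X^\eta$ on $(0,y_*)$ and a bounded function of $X$ on $(y_*,n)$), so $\widehat M(\cdot\wedge\beta_n)$ is a martingale. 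Applying the extended It\^o formula to $G$, using the inequalities of \eqref{eq-G-qvis} and $c_0,c_1\ge0$, and dividing by $t$ gives
\begin{equation*}
F_*\frac{\E_{x_0}[t\wedge\beta_n]}{t}-\frac{\E_{x_0}[G(X(t\wedge\beta_n))]}{t}+\frac{G(x_0)}{t}\le\frac1t\,\E_{x_0}\!\left[\int_0^t c_0(X(s))\,\d s+\sum_{k=1}^\infty I_{\{\tau_k\le t\}}c_1(X(\tau_k-),X(\tau_k))\right].
\end{equation*}
Letting $n\to\infty$ (monotone convergence for $t\wedge\beta_n$) and using $G\le K_G$ to bound $-\E_{x_0}[G(X(t\wedge\beta_n))]/t\ge-K_G/t$, then letting $t\to\infty$, yields $F_*\le J(\tau,Y)$. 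Together with $J(\tau^*,Y^*)=F_*$ this proves optimality of the $(y_*,z_*)$-policy over all of $\A$. I expect the hard part to be neither the algebra nor this last step, but the two structural realizations that drive the whole argument: that the loss of inf-compactness reverses the unbounded behavior of $G$, and that this reversal (bounded above rather than below) is precisely what makes the transversality estimate global and elementary.
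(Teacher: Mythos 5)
Your proposal is correct, and for the existence, uniqueness, and QVI portions it follows the paper's proof essentially step for step: the same six boundary cases for $F$ (diagonal, $z\to\infty$, $(\infty,\infty)$, $y\to 0$, $(0,\infty)$, $(0,0)$, with the same mean-value-theorem substitutions at the corners), the same reduction of uniqueness to the monotonicity of $h$ from \eqref{gbm2-h-def} with $k_3=0$, and the same root comparison $y_*<\widehat{x}<\tilde{x}$ to verify \cndref{eq-AG-c0}. Where you genuinely diverge is the verification step. The paper shows that $\widehat{M}$ is a true martingale (since $G'(x)\sigma x$ is globally bounded here, as $\beta<0$ and $\eta>0$), invokes \remref{rem-M-mg-transversality} to reduce to \eqref{eq-transversality2}, and then verifies that condition via the bound $G\le C_1c_0+C_2$ and the contradiction argument of \propref{add-pol}. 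You instead observe that the failure of inf-compactness reverses \remref{rem-about-G}: $G$ is bounded \emph{above} (tending to $-\infty$ at $\infty$), and this one-sided bound feeds directly into the Dynkin inequality as $-\E[G(X(t\wedge\beta_n))]\ge -K_G$, yielding $F_*\le J(\tau,Y)$ over all of $\A$ without any appeal to the transversality machinery. This is a clean and arguably more transparent route; it also quietly repairs a looseness in the paper, since the equality in \eqref{eq-transversality2} as literally stated relies on $G$ being bounded below via \eqref{eq-G-bdd-above}, which fails in this parameter variation — only the one-sided estimate (which both you and, implicitly, the paper actually use) is needed for the lower bound $F_*\le J$. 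Your localization via $\beta_n$ is harmless but unnecessary, since the integrand of $\widehat{M}$ is globally bounded for this model.
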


\begin{proof}To show the existence of an optimizing pair $(y_{*}, z_{*})\in {\cal R}$, we follow the approach of the proof of \propref{F-optimizers} by examining the asymptotic values of $F$ as $(y,z)$ goes to the boundaries of ${\cal R}$.
\begin{enumerate}
\item[\em (i)] {\em The diagonal boundary $z=y$.}\/  The limit of $F(y,z)$ as $(z-y) \rightarrow 0$ is infinite due to the fixed cost exactly as in the analysis of (i) of the proof of \propref{F-optimizers}. 
\item[\em (ii)] {\em The upper boundary $z=\infty$ with $y$ fixed.}\/  For each $y\in \I$ fixed, $\lim_{z\to \infty} F(y,z) = \infty$ since $\lim_{z\to \infty} \frac{z^\eta}{\ln z}  =\infty$.
\item[\em (iii)] {\em The boundary point $(\infty,\infty)$.}\/  Notice that $ - \frac{k_{4}}{\tilde \rho(\beta )} (z^{\beta} - y^{\beta}) > 0$.  Using the changes of variables, $z=e^u$ and $y=e^v$ and applying the mean value theorem
  \begin{displaymath}
F(y,z) > \frac{k_{2} (z^\eta-y^\eta) }{\frac{2}{2\mu + \sigma^{2}} (\ln z - \ln y)} = \frac{k_{2} (e^{\eta u}-e^{\eta v}) }{\frac{2}{2\mu + \sigma^{2}} (u - v)}  =  k_{2}\eta(\mu + \mbox{$\frac{\sigma^{2}}{2}$}) e^{\eta \xi},
\end{displaymath} 
where $v < \xi <u$. In particular, when $(y,z) \to (\infty, \infty)$, we have $(u,v) \to (\infty,\infty)$ and hence $\xi \to \infty$ as well.  Therefore the assertion  $\lim_{(y,z) \to (\infty, \infty)} F(y,z) = \infty$ follows.
\item[\em (iv)] {\em The left boundary $y=0$ with $z$ fixed.}\/  For each $z\in \I$ fixed, both $\lim_{y\to 0} y^{\beta} = \infty$ and $-\lim_{y\to 0}  \ln y = \infty$ so we can apply L'H\^opital's rule to obtain
  \begin{displaymath}
\lim_{y\to 0} F(y,z) = \lim_{y\to 0}  \frac{ \frac{k_{4}}{\tilde \rho(\beta )} \beta y ^{\beta-1}}{ \frac{2}{2\mu + \sigma^{2}} (-\frac{1}{y})  } = \lim_{y\to 0} \mbox{$\frac{k_{4} \beta (2\mu + \sigma^{2})}{2\tilde \rho(\beta )}$}  y^{\beta} = \infty.
\end{displaymath}
\item[\em (v)] {\em The boundary point $(0,\infty)$.}\/  In this case, $\lim_{(y,z)\rightarrow (0,\infty)} F(y,z) = \infty$ follows from cases (ii) and (iii) above using exactly the same argument as in the corresponding case (v) of the proof of \propref{F-optimizers}.
\item[\em (vi)] {\em The boundary point $(0,0)$.}\/ Similar to case (iii), make the changes of variable $z:= e^{-u}$ and $y:= e^{-v}$ with $0 < u  < v$. Then using the mean value theorem, we have
\begin{displaymath}\begin{aligned}
F(y,z) & = \frac{k_{1 } + k_{2} (e^{-u} - e^{-v}) - \frac{k_{4}}{\tilde \rho(\beta)} (e^{-u \beta} - e^{-v \beta})}{\frac{2}{2\mu + \sigma^{2}}  ((-u) - (-v))} \\
&  >  \mbox{$\frac{k_{4}(2\mu + \sigma^{2})}{2 \tilde \rho(\beta)}$}  \cdot\frac{e^{-v\beta} -e^{-u \beta}}{v-u}  \\
&=  \mbox{$\frac{k_{4}(2\mu + \sigma^{2})}{2 \tilde \rho(\beta)}$}  (-\beta) e^{-\xi \beta},
\end{aligned}\end{displaymath} 
where $u < \xi < v$. When $(y,z) \to (0, 0)$, $(u,v) \to (\infty,\infty)$ and hence $\xi\to \infty$. Therefore it follows that $\lim_{(y,z) \to (0, 0)} F(y,z) =  \infty$ as desired.
\end{enumerate}

 In view of Cases (i)--(vi) above, we conclude that there exists a pair  $(y_{*}, z_{*}) \in \mathcal {R}$ such that the function $F(y,z)$ defined in \eqref{def-F-gbm-bad-exm} attains its minimum value at $(y_{*}, z_{*})$:
$$F_{*}: = \inf\{F(y,z): (y,z) \in \mathcal{R} \} = F(y_{*}, z_{*}) > 0.$$

 Moreover, using   similar arguments as those  for the proofs of Lemmas \ref{lem-F-gbm2} and  \ref{lem-G-qvi-gbm2}, we can show that the minimizing pair $(y_{*}, z_{*}) \in \mathcal {R}$ is unique and that the function 
\begin{equation}\label{G-defn-gbm-bad-exm}
G(x) : = \begin{cases}
 k_{1} + k_{2}(z_*^\eta - x^\eta) - \frac{k_{4}}{\tilde \rho(\beta)} (z_{*}^{\beta} -1) - F_{*} \frac{2 \mu  + \sigma^{2}}{2}\ln z_{*} & \text{ if }0 < x \le y_{*}, \\
- \frac{k_{4}}{\tilde \rho(\beta)} (x^{\beta} -1) - F_{*} \frac{2 \mu  + \sigma^{2}}{2}\ln x  & \text{ if  } x > y_{*}, \rule{0pt}{15pt} \\
\end{cases}
\end{equation} 
solves the system \eqref{eq-G-qvis}.

Next we use Theorem \ref{thm-verification} to verify that the   $(s,S)$-policy with $s= y_{*}$ and $S= z_{*}$ is an optimal inventory control policy. To this end, we first notice that for any $(\tau, Y ) \in \A$ with $J(\tau,Y) < \infty$ and the corresponding controlled inventory process $X$, the process $\widehat{M}(t): = \int_{0}^{t} G' (X(s)) \sigma X(s) \d W(s)$ is a martingale:
  \begin{align*}
\E & \biggl[ \int_{0}^{t} (G' (X(s)) \sigma X(s))^{2} \d s \biggr]  \\ 
 & = \E \biggl[ \int_{0}^{t} (G' (X(s)) \sigma X(s))^{2} \(I_{\{ X(s) \le y_{*}\}} +  I_{\{ X(s) > y_{*}\}}\)\d s \biggr] \\
 & \le \E\biggl[ \int_{0}^{t} \( k_{2}^{2} \sigma^{2} \eta^2 y_{*}^{2\eta}I_{\{ X(s) \le y_{*}\}} + \Bigl[ F_{*} \mbox{$\frac{2 \mu  + \sigma^{2}}{2}$}  \sigma + \mbox{$\frac{k_{4}}{\tilde \rho(\beta)}$} \sigma X(s)^{\beta}\Bigr]^{2}  I_{\{ X(s) > y_{*}\}} \) \d s \biggr] \\
 & \le  \int_{0}^{t} \( k_{2}^{2} \sigma^{2} \eta^2 y_{*}^{2\eta} + \Bigl[ F_{*} \mbox{$\frac{2 \mu  + \sigma^{2}}{2}$}  \sigma + \mbox{$\frac{k_{4}}{\tilde \rho(\beta)}$} \sigma y_{*}^{\beta}\Bigr]^{2} \) \d s < \infty.
\end{align*} 
Consequently, in view of Remark \ref{rem-M-mg-transversality}, for any $(\tau, Y ) \in \A$, we have $F_{*} \le J(\tau, Y)$ provided that we can verify the transversality condition \eqref{eq-transversality2}.  Due to the definitions of the functions $c_{0}(x) = k_{4} x^{\beta}$ and $G$  in \eqref{G-defn-gbm-bad-exm}, we can verify directly that there exists some positive constants $C_{1}$ and $C_{2}$ so that $C_{1} c_{0} (x) + C_{2}  \ge G(x) $ for all $x\in \I$.  Proof of the transversality condition then follows as in the proof of \propref{add-pol}.

Finally we notice that Proposition \ref{sS-stationary} implies that the $(s, S)$-policy   with $s= y_{*}$ and $S= z_{*}$ has long-term expected order frequency $\frac{1}{\zeta(z_{*}) - \zeta(y_{*})}$. Therefore, by \eqref{eq-c0-integral-gbm-bad-exm} and the proof of  Proposition \ref{prop-sS-cost}, the $(s, S)$-policy has cost 
$$\frac{k_{1} + k_{2}(z_{*} - y_{*}) - \frac{k_{4}}{\tilde \rho (\beta)}(z_{*}^{\beta} - y_{*}^{\beta})}{\frac{2\mu+ \sigma^{2}}{2} (\ln z_{*} - \ln y_{*})} = F(y_{*},z_{*})= F_{*}.$$  
This completes the proof.
\end{proof}

\appendix

\section{Proof of \propref{prop-comparison-gbm2}}
The proof of this comparison result is broken into two parts.  The first proposition examines the total cost up to time $t$ of a single order of the $(\tau,Y)$ policy.  The remainder of the proof of \propref{prop-comparison-gbm2} combines the individual adjustments as in the proof of Proposition 4.7 of \cite{HelmesSZ-14} with only minor adjustments needed to account for the different ordering cost function $c_1(y,z) = k_1 + k_2(z^\eta - y^\eta)$.  We therefore refer the reader to the proof in \cite{HelmesSZ-14} and omit the details.

\begin{prop} \label{single-order-comparison}
Let $(\tau,Y) \in {\cal A}$, let $(\tau_k,Y_k)$ denote the $k^{\mbox{\footnotesize th}}$ order and let $X_k$ denote the inventory process resulting from this single order.  Then there exists a policy $(\theta_k,Z_k) = \{(\theta_{k,j},Z_{k,j}): j = 1, \ldots, m_k\} \in {\cal A}_2$ and resulting process $\tilde{X}_k$ such that \eqref{cost-comparison-gbm2} holds pathwise for each $t \geq 0$.
\end{prop}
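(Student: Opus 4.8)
The plan is to exploit the \emph{multiplicative, self-similar structure} of the geometric Brownian motion. Two solutions of $\d X_0 = -\mu X_0\,\d t + \sigma X_0\,\d W$ driven by the same $W$ and started at $x_1 < x_2$ satisfy $X_0^{x_1}(t) = (x_1/x_2)\,X_0^{x_2}(t)$, so their ratio is frozen between orders and the lower-started copy stays pathwise strictly below the higher one; moreover, since $0$ is attracting, $X_k$ drifts to $0$ and returns to any lower level in finite time a.s. This is exactly the freedom needed to hold the replacement process below $X_k$ in a controlled band and then resynchronize. First I would fix a cap $K$ large enough that $K/2$ exceeds the unique minimizer $x_m$ of the U-shaped holding cost $c_0(x)=k_3x+k_4x^\beta$ (so $c_0$ is increasing on $[K/2,\infty)$ and $c_0'(x)\ge k_3/2$ there), and also $K\ge z_*$. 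If the post-order level $z:=X_k(\tau_k)$ already satisfies $z\le K$, I keep the order, set $\tilde X_k=X_k$, and \eqref{cost-comparison-gbm2} holds with equality; before $\tau_k$ the two processes always coincide, so only the post-$\tau_k$ comparison is ever at issue.

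The substance is the overshoot case $z>K$. Here I would replace the single order $y\to z$ by a \emph{cascade} of orders, each raising the inventory exactly to $K$: one at $\tau_k$ (so $\tilde X_k(\tau_k)=K$), a catch-up order each time the freely evolving $\tilde X_k$ descends to the trigger $K/2$, and a final order at the first time $T$ that $X_k$ itself returns to $K$, which resynchronizes $\tilde X_k(T)=K=X_k(T)$. Because the ratio is frozen between orders, $\tilde X_k$ stays in $(K/2,K]$ on the window $(\tau_k,T)$ while $X_k>K$; hence both processes lie in the increasing region of $c_0$ with $\tilde X_k<X_k$, giving the \emph{pointwise} holding inequality $c_0(\tilde X_k)\le c_0(X_k)$, and in fact $c_0(X_k)-c_0(\tilde X_k)\ge \tfrac{k_3}{2}(X_k-\tilde X_k)$. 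After $T$ the two processes agree, so the modification is local and the cascades for distinct orders can be pieced together (in the main proof) into a policy in ${\cal A}_2$ with all post-order levels bounded by $K$, as required by \defref{A2-def}. For the ordering cost I would use that the proportional part of $c_1$ in \eqref{gbm2-c1-def} telescopes, so the $k_2$-contribution of the cascade changes only by a nonpositive concavity term plus the \emph{up-front saving} $k_2(z^\eta-K^\eta)$ incurred at $\tau_k$ by capping at $K$ rather than $z$; the only genuine penalty is the extra fixed charges $k_1$, one per added order, of which there are $m_k\approx\log_2(z/K)$.

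The hard part will be the \emph{running (prefix) inequality}: \eqref{cost-comparison-gbm2} must hold at \emph{every} $t$, so at each instant the accumulated holding savings plus the up-front ordering saving must already dominate the fixed charges of the catch-up orders placed so far. For large overshoot this is comfortable, since the up-front saving $k_2(z^\eta-K^\eta)$ grows polynomially in $z$ while the total extra fixed cost $m_kk_1$ grows only logarithmically; the delicate regime is a \emph{small} overshoot followed by a \emph{fast} descent, where the window $(\tau_k,T)$ is short, the holding savings $\ge\tfrac{k_3}{2}\int_{\tau_k}^{T}(X_k-K)\,\d s$ may be tiny, and yet the resync order still costs $k_1$. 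I expect the resolution to be to schedule each catch-up (and the resync) order only once the savings accrued since the previous order reach $k_1$, and to verify that this can be done before $\tilde X_k$ drops below the trigger $K/2$ — quantified through the frozen ratio and the bound $c_0'\ge k_3/2$ on $[K,\infty)$ — so that no fixed charge is ever posted against an uncovered deficit. Establishing this balance pathwise, uniformly in the overshoot $z/K$ and hence in $m_k$, is the technical heart of the argument. Once \propref{single-order-comparison} is in hand, combining the per-order modifications and deducing $J(\theta,Z)\le J(\tau,Y)$ follow exactly as in Proposition~4.7 of \cite{HelmesSZ-14}, with only the change from a linear to the concave proportional cost $k_2(z^\eta-y^\eta)$ to track.
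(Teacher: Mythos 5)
Your core construction is the same as the paper's: exploit the frozen ratio of two geometric Brownian motions driven by the same noise, replace one huge order by a cascade of capped orders that keeps the replacement process in a band above the minimizer of $c_0$ and below $X_k$, get the holding-cost comparison pointwise, and pay for the logarithmically many extra fixed charges out of the polynomially large proportional saving. However, there is a genuine gap in how you close the ordering-cost side, and your proposed repair would not work. First, your cap $K$ is chosen only so that $c_0$ is increasing on $[K/2,\infty)$ and $K \ge z_*$; it is not chosen relative to the cost parameters. In the small-overshoot regime $z$ slightly above $K$ that you yourself flag, the cascade still contains at least one extra order costing at least $k_1$, while the up-front saving $k_2(z^\eta-K^\eta)$ is arbitrarily small (and, because $x\mapsto x^\eta$ is concave, the proportional parts of the cascade do \emph{not} telescope to something $\le k_2(z^\eta - y_0^\eta)$ — your ``nonpositive concavity term'' goes the wrong way here). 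Second, your fix — delaying each catch-up order until the accrued holding savings reach $k_1$ — conflicts with the construction itself: the catch-up order must be placed exactly when the replacement process reaches the trigger, since otherwise the process leaves the band (breaking the pointwise inequality $c_0(\tilde X_k)\le c_0(X_k)$ and the bounded-post-order-level requirement of membership in ${\cal A}_2$), and there is no pathwise lower bound on how fast holding savings accrue before that happens.

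The paper resolves both issues without ever using the holding savings on the ordering side. It leaves unmodified every order whose post-order level is below a threshold $L^{1/\eta}$, where $L$ is engineered (via the functions $\ell(m)$, $\widehat\ell(m)$, $r(m)$ of \eqref{l-r-fns}: $\ell$ and $\widehat\ell$ grow linearly in the number of cascade steps $m$ while $r$ grows geometrically, so the intervals $[\ell(m),r(m)]$ eventually cover a half-line) so that whenever the cascade is invoked, $X_k^\eta(\tau_k)\ge \ell(m_k)$, i.e.\ the proportional saving alone dominates the \emph{total} extra ordering cost including all $m_k$ fixed charges. The running (prefix) inequality for the ordering costs is then automatic, because the original order's entire cost is posted at $\tau_k$, which precedes (or equals) every replacement order time $\theta_{k,j}$; so the inequality at every $t$ reduces to the inequality on totals. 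If you replace your fixed cap $K$ by such a parameter-dependent threshold and drop the ``pay-as-you-save'' scheduling, your argument matches the paper's.
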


\begin{proof}
To begin, notice that the function $c_0(x) = k_{3} x + k_{4 } x^{\beta}$ attains its minimum value at $y := \left(\frac{(-\beta)k_4}{k_3}\right)^{\frac{1}{1-\beta}}$ and that $c_{0}$ is strictly increasing on $[y, \infty)$.  Fix any $z > y$.  Define the functions 
\begin{equation} \label{l-r-fns}
\begin{array}{rcl}
\ell(m) &=& z^\eta + \left[\mbox{$\frac{k_1}{k_2}$} + (z^\eta - y^\eta)\right]m, \\
\widehat{\ell}(m) &=& \left(\frac{k_1 z^\eta}{k_2(z^\eta - y^\eta)} + z^\eta\right) m, \mbox{ and } \\
r(m) &=& y^\eta \left(\frac{z^\eta}{y^\eta}\right)^{m-1}
\end{array} 
\end{equation} 
and form the intervals ${\cal I}_m = \left[\ell(m),r(m)\right]$ and $\widehat{\cal I}_m = \left[\widehat{\ell}(m),r(m)\right]$.  Since both $\ell$ and $\widehat{\ell}$ grow linearly while $r$ grows geometrically, there exist $\overline{m}$ and $\widehat{m}$ such that
$$\bigcup_{m=\overline{m}}^\infty {\cal I}_m = \left[\ell(\overline{m}),\infty\right) \quad \mbox{ and } \quad \bigcup_{m=\widehat{m}}^\infty \left[\widehat{\ell}(\widehat{m}),\infty\right).$$
Define $L = \ell(\overline{m}) \vee \widehat{\ell}(\widehat{m}) \vee z^\eta.$  The reason for considering the particular functions $\ell$, $\widehat{\ell}$ and $r$ will become clear as we analyze the policy $(\theta_k,Z_k)$, which we now define.

The new policy $(\theta_k,Z_k)$ adjusts the jump of $X_k$ at time $\tau_k$ as follows:
\begin{itemize}
\item[(a)] if the post-order location $X_k(\tau_k) \leq L^{\frac{1}{\eta}}$, use the same order so there is no difference in the two trajectories;
\item[(b)] if $X_k(\tau_k) > L^{\frac{1}{\eta}}$ and $X_k(\tau_k-) > y$,
\begin{itemize}
\item do not order at time $\tau_k$ so the process $\tilde{X}_k(\tau_k) = X(\tau_k-)$; 
\item wait to order until the new process $\tilde{X}_k$ falls to level $y$ and order up to level $z\wedge X_k$;  
\item continue in this manner until the $\tilde{X}_k$ and $X_k$ processes coalesce;
\end{itemize}
\item[(c)] if $X_k(\tau_k) > L^{\frac{1}{\eta}}$ and $X_k(\tau_k-) \leq y$, 
\begin{itemize}
\item immediately order up to level $z$ so $\tilde{X}_k(\tau_k) = z$;
\item when $\tilde{X}_k$ hits $y$, order up to level $z\wedge X_k$; and
\item continue in this manner until the $\tilde{X}_k$ and $X_k$ processes coalesce.
\end{itemize}
\end{itemize}

Since exactly the same order is made in case (a), the costs will be the same.  We therefore need to analyze cases (b) and (c).  

The key to understanding the construction of the policy $(\theta_k,Z_k)$ and process $\tilde{X}_k$ is in understanding how the orders affect the evolution of the inventory processes.  A straightforward analysis establishes that 
$$X_k(t) = \left\{\begin{array}{lr}
x_0 \exp(-(\mu+(\sigma^2/2)) t + \sigma W(t)), & \quad 0 \leq t < \tau_k, \\
x_0 \exp(-(\mu+(\sigma^2/2)) t + \sigma W(t)) \cdot \frac{X_k(\tau_k)}{X_k(\tau_k-)}, & \quad t \geq \tau_k
\end{array}\right.$$
and the pathwise cost up to time $t$ associated with this single order is
\begin{equation} \label{X-k-cost}
\int_0^t (k_3 X_k(s) + k_4 X_k\beta(s))\, \d s + [k_1 + k_2 (X_k^\eta(\tau_k) - X_k^\eta (\tau_k-))] I_{\{\tau_k \leq t\}}.
\end{equation}

Turning to the $\tilde{X}_k$ process, we begin with case (c).  Assume that $X_k(\tau_k-) \leq y$ and $X_k(\tau_k) > L^{\frac{1}{\eta}}$.  In this situation, an order is placed at time $\theta_{k,0}:=\tau_k$ so $\tilde{X}_k(\tau_k) = z$ and $\tilde{X}_k$'s evolution follows the same geometric Brownian motion path from this position.  The second order occurs at $\theta_{k,1}= \inf\{t> \tau_k: \tilde{X}_k(t-) = y\}$.  Over the time interval $[\theta_{k,0},\theta_{k,1})$, the process $\tilde{X}_k$ is
$$\tilde{X}(t) = z \exp(-(\mu+(\sigma^2/2)) t + \sigma W(t)), \quad \theta_{k,0} \leq t < \theta_{k,1}. $$
Notice that $\tau_k = \theta_{k,0} < \theta_{k,1}$ so on the interval $[\tau_k,\theta_{k,1})$, the processes satisfy
$$\frac{\tilde{X}_k(t)}{X_k(t)} = \frac{z}{X(\tau_k)};$$
that is, the processes $\tilde{X}_k$ and $X_k$ are in constant proportion, with constant of proportionality $\frac{z}{X_k(\tau_k)} < 1$ (since $X_k(\tau_k) > L \geq z$).  Also, over the interval $[\tau_k,\theta_{k,1})$, the process moves from $\tilde{X}_k(\tau_k) = z$ to $y$; that is, $\tilde{X}_k$ is reduced by the factor $\frac{y}{z}$.  Since $\tilde{X}_k$ and $X_k$ are in constant proportion, it follows that $X_k$ is also reduced by this factor.  At time $\theta_{k,1}$, an order of size $Z_{k,1} = (z \wedge X_k(\theta_{k,1}))-y$ is placed.  When $X_k(\theta_{k,1}) \leq z$, $\tilde{X}_k$ coalesces with $X_k$; otherwise, $\tilde{X}_k(\theta_{k,1}) = z$ and again the processes $\tilde{X}_k$ and $X_k$ evolve according to the same geometric Brownian motion path from their respective positions and order $(\theta_{k,2},Z_{k,2})$ will be placed when both processes are again reduced by the factor $\frac{y}{z}$.  

Now define 
\begin{equation} \label{m-k-def-c}
m_k = \min\{j: X_k(\tau_k) \left(\mbox{$\frac{y}{z}$}\right)^{j-1} \leq z\}
\end{equation} 
and note that $m_k$ is a finite random variable which will give the number of orders needed after the initial order at time $\tau_k$ for the processes to coalesce.  Recall the definitions of $r$ in \eqref{l-r-fns} and $\overline{m}$. Observe from the definition of $m_k$ and the fact that $X_k(\tau_k) > L^{\frac{1}{\eta}} \geq (r(\overline{m}))^{\frac{1}{\eta}}$ that
\begin{equation} \label{X-tau-ub}
y \cdot \mbox{$\left(\frac{z}{y}\right)^{\overline{m}}$} < X_k(\tau_k) \leq z \cdot \mbox{$\left(\frac{z}{y}\right)^{m_k-1}$} = y \cdot \mbox{$\left(\frac{z}{y}.\right)^{m_k}$}.
\end{equation}
For $j = 1,\ldots, m_k$, define $\theta_{k,j} = \inf\{t>\theta_{k,j-1}: \tilde{X}_k(t-) = y\}$ and, for $j=1,\ldots,m_k-1$, set $Z_{k,j} = z-y$ while $Z_{k,m_k} = X_k(\theta_{k,m_k})-y$.  Then by construction, $\tilde{X}_k(t) \leq X_k(t)$ for all $t \geq 0$.  Moreover, the pathwise cost up to time $t$ of this ordering policy is
\setlength{\arraycolsep}{0.5mm}
$$ \begin{array}{l} \displaystyle
\int_0^t (k_3 \tilde{X}_k(s) + k_4 \tilde{X}_k^\beta(s))\, \d s + [k_1 + k_2(z^\eta - X_k^\eta(\tau_k-))]I_{\{\theta_{k,0} \leq t\}} \\
+ \displaystyle \sum_{j=1}^{m_k-1} [k_1 + k_2(z^\eta-y^\eta)]I_{\{\theta_{k,j} \leq t\}}  
+ [k_1 + k_2(X_k^\eta(\theta_{k,m_k}) - y^\eta)] I_{\{\theta_{k.m_k} \leq t\}}. 
\end{array}$$
Comparing this cost with \eqref{X-k-cost}, we note that since $c_0(x) = k_3 x + k_4 x^\beta$ is strictly increasing on $(y,\infty)$ and since $y \leq \tilde{X}_k(s) \leq X_k(s)$ for all $s$, it immediately follows that 
\begin{equation} \label{holding-cost-comparison-gbm2}
\int_0^t (k_3 \tilde{X}_k(s) + k_4 \tilde{X}_k^\beta(s))\, \d s \leq \int_0^t (k_3 X_k(s) + k_4 X_k^\beta(s))\, \d s.
\end{equation}
We now seek to have the $(\theta_k,Z_k)$ ordering costs be smaller than those for $(\tau_k,Y_k)$.  For this to be true, we require
\begin{eqnarray} \label{order-cost-req} \nonumber
[k_1 + k_2 (X_k^\eta(\tau_k) - X_k^\eta (\tau_k-))] I_{\{\tau_k \leq t\}} &\geq& [k_1 + k_2(z^\eta - X_k^\eta(\tau_k-))]I_{\{\theta_{k,0} \leq t\}} \\ 
& & + \sum_{j=1}^{m_k-1} [k_1 + k_2(z^\eta-y^\eta)]I_{\{\theta_{k,j} \leq t\}} \\ \nonumber 
& & + [k_1 + k_2(X_k^\eta(\theta_{k,m_k}) - y^\eta)] I_{\{\theta_{k,m_k} \leq t\}}.
\end{eqnarray}
First, since $X_k(\theta_{k,m_k}) \leq z$, $k_1 + k_2(z^\eta - y^\eta)$ is greater than the cost at the last ordering time $\theta_{k,m_k}$.  Also since $\tau_1 = \theta_{k,0} < \theta_{k,1} < \cdots < \theta_{k,m_k}$, the condition \eqref{order-cost-req} is satisfied whenever
$$k_1 + k_2 (X_k^\eta(\tau_k) - X_k^\eta (\tau_k-)) \geq [k_1 + k_2(z^\eta - X_k^\eta(\tau_k-))] + [k_1 + k_2(z^\eta-y^\eta)]m_k$$
or equivalently, whenever
\begin{equation} \label{X-tau-lb}
X_k^\eta(\tau_k) \geq z^\eta + \left[\mbox{$\frac{k_1}{k_2}$} + (z^\eta - y^\eta)\right]m_k = \ell(m_k).
\end{equation}
The for those paths for which $(\tau_k,Y_k)$ is in case (c), the ordering policy $(\theta_k,Z_k)$ has bounded post-order inventory sizes and costs less up to time $t$, for each $t \geq 0$, than the order $(\tau_k,Y_k)$.

Turning to an examination of case (b) in which $X_k(\tau_k-) > y$, there is no order at time $\theta_{k,0}=\tau_k$.  We need to consider separately the situations in which $y < X_k(\tau_k) \leq \frac{z}{y}\, X_k(\tau_k-)$ and $X_k(\tau_k) > \frac{z}{y}\, X_k(\tau_k-)$.  In the first of these situations, the $(\theta_k,Z_k)$ ordering policy will consist of a single order at time $\theta_{k,1}$ so that $\tilde{X}_k(\theta_{k,1}) = X_k(\theta_{k,1}) = X_k(\tau_k) \cdot \frac{y}{X_k(\tau_k-)}$.  The corresponding ordering cost is 
\begin{eqnarray*}
c_1(\tilde{X}_k(\theta_{k,1}-),\tilde{X}_k(\theta_{k,1}) &=& [k_1 + k_2(\tilde{X}_k^\eta(\theta_{k,1}) - y^\eta)] I_{\{\theta_{k,1} \leq t\}} \\
&=& [k_1 + k_2((X_k(\tau_k) \cdot \mbox{$\frac{y}{X_k(\tau_k-)}$})^\eta \ - y^\eta)] I_{\{\theta_{k,1} \leq t\}} \\
&=& [k_1 + k_2(X_k^\eta(\tau_k) - X_k(\tau_k-)^\eta)\cdot \mbox{$(\frac{y}{X_k(\tau_k-)}$})^\eta] I_{\{\theta_{k,1} \leq t\}} \\
&\leq& [k_1 + k_2(X_k^\eta(\tau_k) - X_k(\tau_k-)^\eta)] I_{\{\tau_k \leq t\}} \\
&=& c_1(X_k(\tau_,-),X_k(\tau_k)).
\end{eqnarray*}

Finally, when $X_k(\tau_k) > \frac{z}{y}\, X_k(\tau_k-)$, more than a single order is required in order for the $\tilde{X}_k$ process to coalesce with the $X_k$ process.  Again letting $m_k$ denote the number of orders, we must have
$$z \geq X_k(\theta_{k,m_k}) = X_k(\tau_k) \cdot \mbox{$\frac{y}{X_k(\tau_k-)} \left(\frac{y}{z}\right)^{m_k-1}$}$$
and this condition is satisfied whenever \eqref{X-tau-ub} holds.  Comparing costs, we require
\begin{equation} \label{cost-comparison-gbm2-b}
\begin{array}{rcl} \displaystyle
\sum_{j=1}^{m_k-1} [k_1 + k_2(z^\eta - y^\eta)]I_{\{\theta_{k,j}\leq t\}} &+& [k_1 + k_2(X_k^\eta(\theta_{k,m_k}) - y^\eta)]I_{\{\theta_{k,m_k}\leq t\}} \\
&\leq& [k_1 + k_2(X_k^\eta(\tau_k) - X_k^\eta(\tau_k-))]I_{\{\tau_k\leq t\}}
\end{array}
\end{equation}
Again using the fact that $X_k(\theta_{k,m_k}) \leq z$, a sufficient condition for \eqref{cost-comparison-gbm2-b} is
\begin{equation} \label{cost-comparison-gbm2-b-2}
[k_1 + k_2(z^\eta - y^\eta)]m_k \leq k_1 + k_2(X_k^\eta(\tau_k) - X_k^\eta(\tau_k-)).
\end{equation}
Recall for this second situation, $X_k(\tau_k) > \frac{z}{y}\, X_k(\tau_k-)$ so 
$$X_k^\eta(\tau_k) - X_k^\eta(\tau_k-) > X_k^\eta(\tau_k) - X_k^\eta(\tau_k)\, \mbox{$\left(\frac{y}{z}\right)^\eta$} = \left(1-\mbox{$\left(\frac{y}{z}\right)^\eta$}\right) X_k(\tau_k)$$
and hence
\begin{eqnarray*}
k_1 + k_2(X_k^\eta(\tau_k) - X_k^\eta(\tau_k-)) &\geq& k_1 + k_2\left(1-\mbox{$\left(\frac{y}{z}\right)^\eta$}\right) X_k(\tau_k) \\
&>& k_2\left(1-\mbox{$\left(\frac{y}{z}\right)^\eta$}\right) X_k(\tau_k).
\end{eqnarray*}
Recall the definition of $\widehat{\ell}$ in \eqref{l-r-fns}.  A sufficient condition for \eqref{cost-comparison-gbm2-b-2} and hence for \eqref{cost-comparison-gbm2-b} is
$$[k_1 + k_2(z^\eta - y^\eta)] m_k \leq k_2\left(1-\mbox{$\left(\frac{y}{z}\right)^\eta$}\right) X_k(\tau_k); \mbox{ that is, } \widehat{\ell}(m_k) \leq X_k^\eta(\tau_k).$$
For case (b) as for case (c), the $(\theta_k,Z_k)$ ordering policy is such that $m_k \geq \widehat{m}$ and hence this condition is satisfied.  Hence the total cost up to time $t$ generated by the $(\theta_k,Z_k)$ ordering policy is no greater than that for the $(\tau_k,Y_k)$ ordering policy.
\end{proof}

\bibliographystyle{apalike}

\begin{thebibliography}{} 

\bibitem[Arrow et~al., 1958]{arro:58}
Arrow, K.~J., Karlin, S., and Scarf, H. (1958).
\newblock {\em Studies in the mathematical theory of inventory and production}.
\newblock Stanford Mathematical Studies in the Social Sciences, I. Stanford
  University Press, Stanford, Calif.

			
\bibitem[Bakker et~al., 2012]{bakk:12}
Bakker, M., Riezebos, J. and Teunter, R.H. (2012).
\newblock Review of Inventory Systems with Deterioration Since 2001.
\newblock {\em European J. Oper. Res.}, 221(2):275--284.

\bibitem[Baron et~al., 2011]{baro:11}
Baron, O., Berman, O. and Perry, D. (2011).
\newblock Shelf-space management when demand depends on the inventory level.
\newblock {\em Prod. Oper. Management,} 20(5):714--726.

\bibitem[Bather, 1966]{bather-66}
Bather, J.~A. (1966).
\newblock A continuous time inventory model.
\newblock {\em J. Appl. Probability}, 3:538--549.

\bibitem[Benkherouf and Bensoussan, 2009]{Benk:09}
Benkherouf, L. and Bensoussan, A. (2009).
\newblock Optimality of an {$(s,S)$} policy with compound {P}oisson and
  diffusion demands: a quasi-variational inequalities approach.
\newblock {\em SIAM J. Control Optim.}, 48(2):756--762.

\bibitem[Bensoussan, 2011]{Bens-11}
Bensoussan, A. (2011).
\newblock {\em Dynamic programming and inventory control}, volume~3 of {\em
  Studies in Probability, Optimization and Statistics}.
\newblock IOS Press, Amsterdam.

\bibitem[Bensoussan et~al., 2013]{Bensoussan-13}
Bensoussan, A., {\c{C}}akany{\i}ld{\i}r{\i}m, M., Li, M., and Sethi, S.~P.
  (2013).
\newblock Optimal inventory control with shrinkage and observed sales.
\newblock {\em Stochastics}, 85(4):589--603.

\bibitem[Bensoussan et~al., 2005]{Bens:05}
Bensoussan, A., Liu, R.~H., and Sethi, S.~P. (2005).
\newblock Optimality of an {$(s,S)$} policy with compound {P}oisson and
  diffusion demands: a quasi-variational inequalities approach.
\newblock {\em SIAM J. Control Optim.}, 44(5):1650--1676 (electronic).

\bibitem[Bensoussan et~al., 2010]{Bens:10}
Bensoussan, A., Moussawi-Haidar, L., and {\c{C}}akany{\i}ld{\i}r{\i}m, M.
  (2010).
\newblock Inventory control with an order-time constraint: optimality,
  uniqueness and significance.
\newblock {\em Ann. Oper. Res.}, 181:603--640.

\bibitem[Berman and Perry, 2006]{berm:06}
Berman, O. and Perry, D. (2006).
\newblock An EOQ-model with state-dependent rate.
\newblock {\em  European J. Oper. Res.,} 171:255-272.

\bibitem[Borodin and Salminen, 2002]{boro:02}
Borodin, A.~N. and Salminen, P. (2002).
\newblock {\em Handbook of {B}rownian motion---facts and formulae}, 2nd Ed., {\em Probability and its Applications},
Birkh\"auser Verlag, Basel.

\bibitem[Cadenillas et~al., 2010]{cadenillas-10}
Cadenillas, A., Lakner, P., and Pinedo, M. (2010).
\newblock Optimal control of a mean-reverting inventory.
\newblock {\em Oper. Res.}, 58(6):1697--1710.

\bibitem[Christensen, 2014]{chri:14}
Christensen, S. (2014),
\newblock On the solution of general impulse control problems using superharmonic functions.
\newblock {\em Stochastic Process. Appl.}, 124(1):709--729.

\bibitem[Dai and Yao, 2013a]{DaiY-13-average}
Dai, J.~G. and Yao, D. (2013a).
\newblock Brownian inventory models with convex holding cost, part 1:
  Average-optimal controls.
\newblock {\em Stoch. Syst.}, 3(2):442--499.

\bibitem[Dai and Yao, 2013b]{DaiY-13-discounted}
Dai, J.~G. and Yao, D. (2013b).
\newblock Brownian inventory models with convex holding cost, part 2:
  Discount-optimal controls.
\newblock {\em Stoch. Syst.}, 3(2):500--573.

\bibitem[Ethier and Kurtz, 1986]{ethi:86}
Ethier, S.~N. and Kurtz, T.~G. (1986).
\newblock {\em Markov processes: Characterization and Convergence}.
\newblock Wiley Ser. Probab. Math. Stat., John Wiley \& Sons Inc., New York.


\bibitem[Fleischmann and Kuik, 2003]{FleiK-03}
Fleischmann, M. and Kuik, R. (2003).
\newblock On optimal inventory control with independent stochastic item
  returns.
\newblock {\em European J. Oper. Res.}, 151(1):25--37.

\bibitem[Girlich and Chik\'an, 2001]{girl:01}
Girlich, H.-J. and Chik\'an, A. (2001).
\newblock The origins of dynamic inventory modelling under uncertainty: (the
  men, their work and connection with the stanford studies).
\newblock {\em International Journal of Production Economics}, 71(1--3):351 --
  363.
\newblock Tenth International Symposium on Inventories.

\bibitem[Harrison, 1985]{Harrison-85}
Harrison, J.~M. (1985).
\newblock {\em Brownian motion and stochastic flow systems}.
\newblock Wiley Ser. Probab. Math. Stat., John Wiley \& Sons, Inc., New York.

\bibitem[Harrison et~al., 1983]{HarrisonST-83}
Harrison, J.~M., Sellke, T.~M., and Taylor, A.~J. (1983).
\newblock Impulse control of {B}rownian motion.
\newblock {\em Math. Oper. Res.}, 8(3):454--466.

\bibitem[{He} et~al., 2015]{HeYZ-15}
{He}, S., {Yao}, D., and {Zhang}, H. (2015).
\newblock {Optimal Ordering Policy for Inventory Systems with
  Quantity-Dependent Setup Costs}.
\newblock {\em ArXiv e-prints 1501.00783}.
 
\bibitem[Helmes et~al., 2015]{HelmesSZ-14}
Helmes, K.~L., Stockbridge, R.~H., and Zhu, C. (2015).
\newblock A measure approach for continuous inventory models: Discounted cost
  criterion.
\newblock  {\em SIAM J. Control Optim.}, 53(4):2100--2140. 

\bibitem[Jain et~al., 2008]{jain:08}
Jain, S., Kumar, M. and Advani, P. (2008).
\newblock Inventory model with inventory-level dependent rate, deterioration, partial backlogging and decrease in demand.
\newblock {\em Int. J. Oper. Res.,} 51:154--159.

\bibitem[Karlin and Taylor, 1981]{KarlinT81}
Karlin, S. and Taylor, H.~M. (1981).
\newblock {\em A second course in stochastic processes}.
\newblock Academic Press Inc. [Harcourt Brace Jovanovich Publishers], New York.

\bibitem[Kurtz and Stockbridge, 2001]{kurt:01}
Kurtz, T.~G. and Stockbridge, R.~H. (2001).
\newblock Stationary solutions and forward equations for controlled and
  singular martingale problems.
\newblock {\em Electron. J. Probab.}, 6:1--52.
\newblock Paper No. 14.

\bibitem[Mandl, 1968]{mand:68}
Mandl, P. (1968).
\newblock {\em Analytical treatment of one-dimensional {M}arkov processes}.
\newblock Die Grundlehren der mathematischen Wissenschaften, Band 151. Academia
  Publishing House of the Czechoslovak Academy of Sciences, Prague;
  Springer-Verlag New York Inc., New York.

\bibitem[Presman and Sethi, 2006]{PresmanS-06}
Presman, E. and Sethi, S.~P. (2006).
\newblock Inventory models with continuous and poisson demands and discounted
  and average costs.
\newblock {\em Prod. Oper. Management}, 15(2):279--293.

\bibitem[Rogers and Williams, 2000]{roge:00}
Rogers, L. C. G. and Williams, D. (2000).
\newblock{\em Diffusions, {M}arkov processes, and martingales.} {V}ol. 2.
\newblock Cambridge Mathematical Library, Cambridge University Press, Cambridge.

\bibitem[Sulem, 1986]{sulem-86}
Sulem, A. (1986).
\newblock A solvable one-dimensional model of a diffusion inventory system.
\newblock {\em Math. Oper. Res.}, 11(1):125--133.

\bibitem[Urban, 2005]{urba:05}
Urban, T.L. (2005).
\newblock Inventory models with inventory-level demand: A comprehensive review and unifying theory.
\newblock {\em European J. Oper. Res.,} 162:792--804.
 
  

 

\end{thebibliography}

\def\cprime{$'$}

\end{document}